\declaretheoremstyle[notefont=\bfseries,notebraces={}{},%
   headpunct={},postheadspace=1em,bodyfont=\it]{mystyle}
\declaretheorem[style=mystyle,numbered=no,name=Theorem]{thm-hand}
\newtheorem{thmletter}{Theorem}
\DeclareMathOperator{\dist}{dist}
\DeclareMathOperator{\sgn}{sgn}
\DeclareMathOperator{\ttt}{type}
\DeclareMathOperator{\Ker}{Ker}
\DeclareMathOperator{\Kil}{Kil}
\DeclareMathOperator{\diag}{diag}
\DeclareMathOperator{\Lip}{Lip}
\DeclareMathOperator{\Image}{Im}
\DeclareMathOperator{\Real}{Re}
\DeclareMathOperator{\interior}{int}
\DeclareMathOperator{\ad}{\mathrm{ad}}
\DeclareMathOperator{\Ad}{\mathrm{Ad}}
\DeclareMathOperator{\Exp}{\mathrm{Exp}}
\DeclareMathOperator{\sspan}{\mathrm{span}}
\DeclareMathOperator{\atan}{\mathrm{arctan}}
\newcommand{\tmax}{t_{\mathrm{max}}}
\newcommand{\tconj}{t_{\mathrm{conj}}}
\newcommand{\tcut}{t_{\mathrm{cut}}}
\newcommand{\Rinj}{R_{inj}}
\newcommand{\ssl}{\mathfrak{sl}}
\newcommand{\g}{\mathfrak{g}}
\newcommand{\h}{\mathfrak{h}}
\newcommand{\m}{\mathfrak{m}}
\newcommand{\R}{\mathbb{R}}
\newcommand{\C}{\mathbb{C}}
\newcommand{\N}{\mathbb{N}}
\newcommand{\Z}{\mathbb{Z}}
\newcommand{\HH}{\mathbb{H}}
\newcommand{\id}{\mathrm{id}}
\newcommand{\SU}{\mathrm{SU}}
\newcommand{\U}{\mathrm{U}}
\newcommand{\AdS}{\mathrm{AdS}}
\newcommand{\SL}{\mathrm{SL}}
\newcommand{\SO}{\mathrm{SO}}
\newcommand{\const}{\mathrm{const}}
\newcommand{\hone}{\bar{h}_1}
\newcommand{\st}{\sin{\tau}}
\newcommand{\ct}{\cos{\tau}}
\newcommand{\sht}{\sinh{\tau}}
\newcommand{\cht}{\cosh{\tau}}
\newcommand{\se}{\sin{(\tau \eta \hone)}}
\newcommand{\ce}{\cos{(\tau \eta \hone)}}
\newcommand{\argl}{\frac{t \eta h_1}{2 I_2}}
\newcommand{\stl}{\sin{\argl}}
\newcommand{\ctl}{\cos{\argl}}
\newcommand{\typeh}{\ttt{(h)}}
\newcommand{\A}{\mathcal{A}}
\newcommand{\M}{\mathcal{M}}
\newcommand{\Conj}{\mathrm{Conj}}
\newcommand{\Cut}{\mathrm{Cut}}
\DeclareMathOperator{\arcsinh}{arcsinh}
\theoremstyle{definition}
\newtheorem{definition}{Definition}
\newtheorem{remark}{Remark}
\newtheorem{example}{Example}
\theoremstyle{plain}
\newtheorem{lemma}{Lemma}
\newtheorem{theorem}{Theorem}
\newtheorem{proposition}{Proposition}
\newenvironment{enumerate*}%
  {\begin{enumerate}%
    \setlength{\itemsep}{1pt}%
    \setlength{\parskip}{1pt}}%
  {\end{enumerate}}
\title{Cut loci of Berger type Lorentzian structures}
\author{
A.\,V.~Podobryaev \\ A.\,K.~Ailamazyan Program Systems
Institute of RAS \\ \tt{alex@alex.botik.ru} \\
}
\date{}
\begin{document}

\maketitle

\begin{abstract}
Consider the deformation of the standard Lorentzian metric on the anti de-Sitter space along the fibers of the Hopf fibration.
We study the universal covering of this Lorentzian manifold to exclude a priori presence of time-like cycles.
We describe the sets attainable by admissible curves and study the question of the existence of the longest arcs.
Next, we investigate Lorentzian geodesics for optimality: we find the cut time and the cut locus.
As a geometric application we compute the injectivity radius of the corresponding Lorentzian manifold.

\textbf{Keywords}: Lorentzian geometry, Berger metric, Hopf fibration, cut locus, attainable set, geometric control theory.

\textbf{AMS subject classification}:
53C50, 
53C30, 
49J15. 
\end{abstract}

\section*{\label{sec-introduction}Introduction}

Let us define the Lorentzian manifold we deal with. Consider the surface
$$
\HH^{1,n} = \{(z,w_1,\dots,w_n) \in \C^{n+1} \, | \, -|z|^2 + |w|^2 = -1\},
$$
where $w = (w_1,\dots,w_n) \in \C^n$ and $|w|^2 = |w_1|^2 + \dots + |w_n|^2$.
This manifold equipped with the restriction from $\C^{n+1}$ to $\HH^{1,n}$ of the quadratic form $-|z|^2 + |w|^2$ of the signature $(2,2n)$
is called \emph{the anti de-Sitter space} and is denoted by $\AdS_{2n+1}$. Notice, that the restricted quadratic form has the signature $(1,2n)$.

Next, consider the diagonal action of the circle, i.e., the Lie group $\U_1$,
$$
e^{i\varphi} \cdot (z,w) = (e^{i\varphi}z, e^{i\varphi}w_1, \dots, e^{i\varphi}w_n), \qquad (z,w) \in \HH^{1,n}, \qquad e^{i\varphi} \in \U_1.
$$
We get \emph{the Hopf fibration} $\HH^{1,n} \rightarrow \HH^{1,n} / \U_1 \subset \C P^n$.
In this paper, we consider Lorentzian quadratic forms on the manifold $\HH^{1,n}$, i.e., nondegenerate quadratic forms with signature $(1,2n)$, that are deformations of the anti de-Sitter Lorentzian form along the fibers of the Hopf fibration.
More precisely, fix the point $o = (1,0) \in \HH^{1,n}$, consider the tangent space $T_o\HH^{1,n} = \{(ia,w) \, | \, a \in \R, \, w \in \C\}$ and the quadratic form
$Q_o(a,w) = -I_1 a^2 + I_2 |w|^2$ on this space, where $I_1, I_2 > 0$ are fixed. Next, we define the quadratic form $Q$ on the whole space $\HH^{1,n}$ by the shifts of the form $Q_o$ via the transitive action of the Lie group $\U_{1,n}$ on the manifold $\HH^{1,n}$. Note, that this definition is correct, since the form $Q_o$ is invariant under the stabilizer of the point $o$ that is equal to
$$
\left\{
\left(
\begin{array}{ll}
1 & 0\\
0 & A\\
\end{array}
\right) \, \Bigm| \, A \in \U_n \right\} \simeq \U_n.
$$
Thus, we consider invariant Lorentzian quadratic forms on the homogeneous space $\U_{1,n}/\U_n$. We call them \emph{Berger type Lorentzian structures} by analogue with well known \emph{Berger spheres} $\U_{n+1} / \U_n \simeq S^{2n+1}$ with Riemannian metrics deformed along the fibers $S^1$ of the Hopf fibration $S^{2n+1} \rightarrow \C P^n$
given by the $\U_1$-action on the unit sphere $S^{2n+1} \subset \C^{n+1}$. Cut loci for Berger spheres were found by C.~Rakotoniaina~\cite{rakotoniaina}.

The considered Lorentzian structures are more complex than anti de-Sitter one. The classical anti de-Sitter space appears as a particular case when $I_1 = I_2$ and the corresponding homogeneous space $\AdS_{2n+1} = \SO_{2,2n}/\SO_{1,2n}$ has a more reach symmetry group.
For a general Berger type Lorentzian structure the sectional curvature is not constant in contrast with the anti de-Sitter space.
Recently the interest in such spaces is rising~\cite{calvaruso-helix}.

Having a Lorentzian quadratic form $Q$ on a manifold $M$ one can define admissible curves and their lengths. A curve $\gamma: [0,t_1] \rightarrow M$ is called \emph{an admissible} if almost all of its tangent vectors are \emph{light-like} or \emph{time-like} with respect to the Lorentzian quadratic form $Q$, i.e.,
$Q_{\gamma(t)}(\dot{\gamma}(t)) = 0$ or $Q_{\gamma(t)}(\dot{\gamma}(t)) < 0$ for almost all $t \in [0,t_1]$ and any velocity vector $\dot{\gamma}(t)$ must be inside or on the border of the same component of the light-cone as the fixed time-like vector field (called \emph{a time direction}).
\emph{The length of the curve} $\gamma$ is defined as
$\int\limits_0^{t_1}{\sqrt{|Q_{\gamma(t)}(\dot{\gamma}(t))|} \, dt}$.
\emph{The Lorentzian distance} from the point $q_0 \in M$ to the point $q_1 \in M$ is the supremum of Lorentzian lengths of all admissible curves from the point $q_0$ to the point $q_1$. If this supremum does not exist, then we say that the Lorentzian distance equals $+\infty$. If the curve where this supremum is achieved exists, then it is called \emph{the longest arc}.

It is easy to see that the existence of an admissible cycle implies the non-existence of the longest arc. Thus, it is natural to consider simply connected Lorentzian manifolds. So, we investigate the universal covering of the homogeneous space $\HH^{1,n} = \U_{1,n}/\U_n$ which we denote by $M = \widetilde{\HH}^{1,n}$.
The Lie group $\U_{1,n}$ is homotopic to its maximal compact subgroup $\U_1 \times \U_n$. Hence, for fundamental groups we have
$\pi_1(\U_n) = \Z \hookrightarrow \Z \times \Z = \pi_1(\U_1 \times \U_n) = \pi_1(\U_{1,n})$, this is the embedding to the second component.
It follows, that the universal covering $\widetilde{\U}_{1,n}$ acts transitively on the space $M = \widetilde{\U}_{1,n} / \widetilde{\U}_n$ and
the Lorentzian structure is invariant under this action.
Notice, that the universal covering of the anti de-Sitter space $\AdS_{2n+1}$ is also often called the anti de-Sitter space.

Our goal is to find the longest arcs.
We use the following coordinates on the manifold $M$ and the projection map:
$$
\Pi : M = \{(c,w) \, | \, c \in \R, w \in \C^n \} \rightarrow \HH^{1,n} = \U_{1,n}/\U_n, \qquad
\Pi(c,w) = \left( \sqrt{1+|w|^2}e^{ic}, w \right).
$$
Since our structure is $\widetilde{\U}_{1,n}$-invariant, it is sufficient to consider the fixed point $\tilde{o} = (0,0)$ of the manifold $M$ as a start point for admissible curves.

The basic case is the three dimensional case $\widetilde{\HH}^{1,1}$. Any case of bigger dimension $\widetilde{\HH}^{1,n}$ can be reduced to the three dimensional case with the help of the $\U_n$-symmetry.

We use methods of geometric control theory~\cite{agrachev-sachkov}. We find extremal trajectories called \emph{geodesics} with the help of Pontryagin maximum principle. It turns out that geodesics are orbits of one-parametric subgroups of the group of isometries. Then we find the attainable set $\A$ and study the question of the existence of the longest arcs on the attainable set. This depends on the ratio of the eigenvalues $-I_1$ and $I_2$.
We refer to the case $I_1 > I_2$ as \emph{the oblate case}, the case $I_1 = I_2$ as \emph{the symmetric case}, and the case $I_1 < I_2$ as \emph{the prolate case}.

\begin{thmletter}
\label{th-A}
We have the following description of the attainable set $\A$.\\
\emph{(1)} In the oblate case we can attain any point of the manifold $M$, i.e., $\A = M$.
The longest arcs do not exist.\\
\emph{(2)} In the symmetric case
$$
\A = \{(c,w) \in M \, | \, \arctan{|w|} \leqslant c\}.
$$
The longest arcs exist only for the terminal points located in the set
$$
\A_{exist} = \{(c,w) \in M \, | \, \arctan{|w|} \leqslant c < \pi - \arctan{|w|} \} \cup \{(\pi,0)\}.
$$
\emph{(3)} In the prolate case
$$
\A = \Biggl\{(c,w) \in M \, \Bigm| \, c \geqslant
\atan{\Biggl(\frac{\tan{(\tau\sqrt{\eta})} + \frac{1}{\sqrt{\eta}}\tanh{\tau}}{1-\frac{1}{\sqrt{\eta}}\tanh{\tau}\tan{(\tau\sqrt{\eta})}}\Biggr)}, \ \text{where} \
\tau = \arcsinh{\Bigl(|w|\frac{\sqrt{\eta}}{\sqrt{\eta+1}}\Bigr)} \Biggr\},
$$
where $\eta = \frac{I_2}{I_1} - 1$.
Any point of the set $\A$ can be reached from the point $\tilde{o}$ by the longest arc.
\end{thmletter}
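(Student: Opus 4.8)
The plan is to reduce the problem to the three-dimensional model $\widetilde{\HH}^{1,1}$, integrate the geodesics produced by the Pontryagin maximum principle, identify the lower boundary of $\A$ with the null geodesics issuing from $\tilde o$, and then settle the existence of longest arcs by a causality analysis that distinguishes the three regimes through the sign of $\eta=I_2/I_1-1$.

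\emph{Reduction to dimension three.} First I would use the isotropy $\U_n$ of $\tilde o$. Since $Q$ and the chosen time orientation are $\U_n$-invariant and $\U_n$ acts transitively on each sphere $\{|w|=\const\}\subset\C^n$, the point $(c,w)$ is attainable if and only if $(c,w')$ is attainable for every $w'$ with $|w'|=|w|$, and the Lorentzian distance depends only on $(c,|w|)$. Putting $w'$ in a fixed complex line embeds the situation into the invariant slice $\widetilde{\HH}^{1,1}$, so every assertion reduces to the case $n=1$; this is exactly why the answer is phrased in terms of $c$ and $|w|$.

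\emph{Geodesics and the null boundary.} Next I would write length maximization as a left-invariant optimal control problem on $\widetilde{\U}_{1,1}$ whose admissible velocities fill the future causal cone of $Q$, and apply the maximum principle. By left-invariance the vertical part of the adjoint covector obeys Euler-type equations on the dual Lie algebra, the maximized Hamiltonian is constant, and the extremals project to orbits of one-parameter subgroups of the isometry group, as noted above; these I would integrate in closed form. Whether the relevant subgroup is elliptic or hyperbolic is governed by the sign of $\eta$, which is the source of the trigonometric-versus-hyperbolic dichotomy separating the three cases. The attainable set is the causal future $\A=J^{+}(\tilde o)$, whose topological boundary is swept out by the null geodesics from $\tilde o$; integrating the null equations and eliminating the parameter gives the relation $c=f(|w|)$. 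One obtains $c=\arctan|w|$ for $\eta=0$, the stated $\atan$-expression with $\tau=\arcsinh\bigl(|w|\sqrt{\eta}/\sqrt{\eta+1}\bigr)$ for $\eta>0$, and, for $\eta<0$, null geodesics that wind in $c$ without bound and therefore sweep out all of $M$, so that $\A=M$. To prove $\A\subseteq\{c\geqslant f(|w|)\}$ I would establish monotonicity of $c$ along future causal curves together with the differential inequality relating $\dot c$ and $|\dot w|$ forced by $Q$; the reverse inclusion follows by reaching interior points along time-like geodesics and boundary points along the null geodesics themselves.

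\emph{Existence of longest arcs.} Finally, existence is decided differently in each regime. In the oblate case the spiralling extremals yield admissible curves of unbounded length to a fixed target, so the supremum is $+\infty$ and no longest arc exists. In the prolate case I would argue that $M$ is globally hyperbolic, whence the Lorentzian Avez--Seifert theorem furnishes a maximizing geodesic to every point of $\A$, realized by the explicit time-like extremals. In the symmetric case $M$ is the non-globally-hyperbolic universal anti de-Sitter space: the time-like geodesics through $\tilde o$ refocus at the first conjugate point $(\pi,0)$, and a maximizing geodesic exists precisely while the target stays strictly below the cutoff $c=\pi-\arctan|w|$; at or beyond it one routes causal curves toward the conformal boundary to make the length unbounded, so the supremum is $+\infty$ there, the sole exception being the isolated tip $(\pi,0)$ attained by the central purely time-like geodesic. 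This produces $\A_{exist}$. I expect the main obstacle to be twofold: carrying out the null-geodesic integration and inverting it to the closed form $f$ in the prolate case, and proving that the supremum is genuinely attained rather than merely finite, that is, verifying global hyperbolicity (or running a direct compactness argument on a causally convex exhaustion) so that maximizing sequences cannot escape to the boundary.
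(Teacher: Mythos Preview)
Your overall strategy matches the paper's---reduction to $n=1$ via the $\U_n$-isotropy, PMP and explicit integration of extremals, identification of $\partial\A$ with null geodesics, and a case-by-case existence analysis---but the oblate case contains a genuine error, and the prolate existence argument diverges from the paper in a way worth flagging.

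\textbf{Oblate case.} Your assertion that ``null geodesics wind in $c$ without bound and therefore sweep out all of $M$'' is false. When $\eta<0$ every light-like initial covector has $\Kil h<0$ and, after normalising, the \emph{same} values $\bar h_1=-1/\sqrt{-\eta}$ and $\bar h_2^2+\bar h_3^2=(1+\eta)/(-\eta)$; consequently $|w(\tau)|=|\sin\tau|\sqrt{(1+\eta)/(-\eta)}$ is uniformly bounded, while $c(\tau)\sim\tau(1-\sqrt{-\eta})\to+\infty$. Thus all null geodesics sit in a fixed bounded cylinder and never reach $c<0$: they do not sweep out $M$, and the causal future $J^+(\tilde o)$ is not obtained this way. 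The paper proves $\A=M$ by an entirely different mechanism: an iterative construction using admissible curves with \emph{constant light-like controls} (one-parameter subgroups, not geodesics). One first reaches the half-space $\{c\ge\arctan(\sqrt{1+\eta}/\sqrt{-\eta})\}$; then one shows that from any set $\{c=c_0,\ |w|>r\}$ a suitably chosen left-translated constant-control arc lands in $\{c=c_0-\varepsilon,\ |w|>r_1\}$, so $c$ can be decreased step by step; finally one steers back to $|w|=0$. The resulting closed admissible loop forces $d\equiv+\infty$ and non-existence of longest arcs. Your sketch misses this mechanism.

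\textbf{Prolate existence.} You invoke Avez--Seifert after asserting global hyperbolicity. The paper does not establish global hyperbolicity; instead it applies a tailored existence theorem (Lokutsievskiy--Podobryaev) requiring a closed $1$-form $\tau$ with $\tau_x(v)\ge |v|_R/(1+\dist_R(x_0,x))$ on the future causal cone, and verifies this for $\tau=B\,dc$ by explicit estimates that use $\eta>0$ sharply (the lower bound on $dc(v)$ degenerates as $\eta\to 0^+$). Your route is plausible---$c$ is a time function precisely when $\eta\ge 0$, and for $\eta>0$ the slices $\{c=\const\}$ are credible Cauchy hypersurfaces---but proving this rigorously is comparable work, and you rightly identify it as the main obstacle.

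\textbf{Symmetric case.} Your outline is close to the paper's, but two points deserve sharpening. For non-existence beyond $c=\pi-\arctan|w|$ the paper makes the ``route toward the boundary'' concrete: a three-piece admissible curve (light-like, then a vertical segment of Euclidean height $\varepsilon$ at $|w|=R$, then light-like) has Lorentzian length $\varepsilon\sqrt{1+R^2}\to\infty$ as $R\to\infty$. For existence on $\A_{exist}$ the paper does not appeal to general causality theory but proves that $\Exp$ is a diffeomorphism from the pre-Maxwell domain onto the open region $\{\arctan|w|<c<\pi-\arctan|w|\}$ and then runs a field-of-extremals (Lagrangian submanifold) argument; the boundary point $(\pi,0)$ is handled by lower semicontinuity of the Lorentzian distance.
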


We prove item~(1) of this theorem by constructing a sequence of constant controls to reach an arbitrary point.

In the symmetric case~(2) the set $\A_{extr}$ attainable by geodesics is well known.
It can be found in textbooks on the Lie theory, see a description of the Lie exponential map $\exp : \ssl_2(\R) \rightarrow \SL_2(\R)$, for example~\cite{hall}.
Then, one can prove that any point of the set $\A \supset \A_{extr}$ is attainable by admissible trajectories. Moreover, admissible trajectories can not leave the set $\A$,
since admissible velocities on its boundary are directed to its interior.
The attainable set in this case was described in paper~\cite{grong-vasiliev} studying the sub-Lorentzian structure on the manifold $\widetilde{\HH}^{1,1}$.
However, we put here this fact and its proof to illustrate the general situation.
But it seems to us that the geodesics optimality is not sufficiently investigated in this case.
Therefore, we explicitly indicate the set $\A_{exist}$ where the longest arcs exist.
It remains to mention a similar study of two-dimensional anti de-Sitter space~\cite{ali-sachkov} which is an axial section of our symmetric case.

Next, we prove item~(3) using the sufficient condition for the existence of the longest arcs on the attainable set developed in paper~\cite{lokutsievskiy-podobryaev}.
It turns out that the longest arcs exist on the attainable set in this case.
Hence, thanks to the Pontryagin maximum principle the set $\A_{extr}$ attainable by geodesics coincide with the whole attainable set $\A$.
Moreover, the set $\A_{extr}$ has a simple description, since we know the equations of geodesics.

For the symmetric and the prolate cases we find \emph{the cut locus} and \emph{the cut time}, i.e., the set of points and the time where geodesics lose their optimality.
Since in the oblate case the longest arcs do not exist, then the cut locus question is senseless in this case.

Let us use the following conventions. Let us denote by $\g$ and $\h$ the Lie algebras of the Lie groups $\U_{1,n}$ and $\U_n$, respectively.
Since the stabilizer $\U_n$ is compact we have a reductive decomposition of the Lie algebra $\g = \m \oplus \h$,
where $\m$ is $(\Ad{\U_n})$-invariant and the subspaces $\m$ and $\h$ are orthogonal with respect to the Killing form $\Kil$ on the Lie algebra $\g$,
see~\cite[Prop.~1]{kowalski-szenthe}. Moreover, the subspace $\m$ can be identified with the tangent space $T_{\tilde{o}}M$.

Any geodesic is a projection of a trajectory of the Hamiltonian vector field on the cotangent bundle $T^*M$.
The initial data of such a trajectory is a covector from the space $T^*_{\tilde{o}}M \simeq \m^*$ which is canonically isomorphic to the annihilator $\h^{\circ} \subset \g^*$.
Thus, the cut time is a function $\tcut: \h^{\circ} \rightarrow \R_+ \cup \{+\infty\}$.
Since the Killing form delivers the isomorphism $\g \simeq \g^*$,
we can consider it as a non-degenerate quadratic form on the space $\g^*$ which we will also denote by $\Kil$.

\begin{thmletter}
\label{th-B}
The cut locus and the cut time have the following description, where $\Kil$ is the Killing form and $|h| = \sqrt{|\Kil(h)|}$ for $h \in \g^*$.\\
\emph{(1)} For the symmetric case $I_1 = I_2$
$$
\Cut = \{(\pi, 0)\}, \qquad
\tcut(h) = \left\{
\begin{array}{lll}
\frac{2 \pi I_2}{|h|}, & \text{if} & \Kil{h} < 0,\\
+\infty, & \text{if} & \Kil{h} = 0,\\
\end{array}
\right. \qquad h \in \h^{\circ},
$$
there are no geodesics with initial covectors $h$ such that $\Kil{h} > 0$.\\
\emph{(2)} For the prolate case $I_1 < I_2$
$$
\Cut = \left\{(c,0) \, \Bigm| \, c \geqslant \frac{\pi I_2}{I_1} \right\}, \qquad
\tcut(h) = \left\{
\begin{array}{lll}
\frac{2 \pi I_2}{|h|}, & \text{if} & \Kil{h} < 0,\\
+\infty, & \text{if} & \Kil{h} \geqslant 0,\\
\end{array}
\right. \qquad h \in \h^{\circ}.
$$
\end{thmletter}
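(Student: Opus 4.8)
The plan is to reduce everything to the three–dimensional model $\widetilde{\HH}^{1,1}$ via the $\U_n$-symmetry, as explained in the Introduction, and then to exploit the explicit description of geodesics as orbits of one-parameter subgroups coming from the Pontryagin maximum principle. Writing a geodesic as $\gamma_h(t)=\exp(tX_h)\cdot\tilde{o}$ in coordinates $(c(t),w(t))$, the first observation is that the causal and dynamical behaviour of $\gamma_h$ is governed by the type of the one-parameter subgroup $\exp(tX_h)$ inside $\SU_{1,1}\cong\SL_2(\R)$: it is elliptic, parabolic, or hyperbolic according as $\Kil h<0$, $\Kil h=0$, or $\Kil h>0$. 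I expect the finiteness of the cut time to correspond exactly to the periodic (elliptic) case $\Kil h<0$, while the non-periodic cases produce no recurrence and hence an infinite cut time.

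The core of the argument is the elliptic case $\Kil h<0$. Here $\gamma_h$ is periodic in projection, and I would first read off from the geodesic equations that $w(t)$ returns to $0$ for the first time precisely at $t=\frac{2\pi I_2}{|h|}$. At this instant the residual rotational symmetry — the stabilizer $\U_n$ of $\tilde{o}$, acting in the three-dimensional model as $w\mapsto e^{i\varphi}w$ — becomes the decisive tool: applying it to $\gamma_h$ produces a whole circle of admissible geodesics of equal Lorentzian length all arriving at the same axis point $(c,0)$, so this point is a Maxwell point and $\gamma_h$ cannot be the longest arc beyond it, giving $\tcut(h)\leqslant\frac{2\pi I_2}{|h|}$. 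For the reverse inequality I would invoke Theorem~\ref{th-A}: before the first return the endpoints of $\gamma_h$ lie in the region where the longest arc exists, and there $\gamma_h$ is the \emph{unique} admissible extremal reaching its endpoint, so by the maximum principle it must be that longest arc. Combining the two bounds yields $\tcut(h)=\frac{2\pi I_2}{|h|}$ for $\Kil h<0$ in both cases.

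To identify $\Cut$ I would evaluate the first-return point $(c(h),0)$ explicitly. In the symmetric case $I_1=I_2$ the co-metric is proportional to the Killing form, and a direct computation should give $c(h)=\pi$ independently of $|h|$; thus every timelike geodesic focuses at $(\pi,0)$, which is simultaneously a conjugate point, and $\Cut=\{(\pi,0)\}$. In the prolate case $I_1<I_2$ the deformation desynchronises the fibre and base frequencies, so $c(h)$ sweeps out the half-line $c\geqslant\frac{\pi I_2}{I_1}$ as $|h|$ varies, giving the ray $\{(c,0)\mid c\geqslant\frac{\pi I_2}{I_1}\}$. It remains to treat $\Kil h\geqslant 0$: then $\exp(tX_h)$ is parabolic or hyperbolic, $\gamma_h$ never returns to the axis, no Maxwell or conjugate points arise along it, and it stays optimal for all time, so $\tcut(h)=+\infty$. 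The only case-dependent point is the sign $\Kil h>0$: in the symmetric case the co-metric equals $\Kil$, so such covectors are space-like and yield no admissible geodesic, whereas in the prolate case the tilted cones make these geodesics causal and hence admissible.

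The main obstacle, as usual for cut-locus problems, is the rigorous optimality up to the cut time rather than the computation of the candidate time itself. Concretely, the delicate steps are, first, proving that before the first return to the axis the geodesic $\gamma_h$ genuinely realises the Lorentzian distance — which I would extract from the existence result of Theorem~\ref{th-A} together with a uniqueness argument for extremals reaching interior points — and, second, establishing that in the non-periodic cases $\Kil h\geqslant 0$ no loss of optimality ever occurs. Verifying the exact constant $\frac{2\pi I_2}{|h|}$ and the precise cut-locus coordinates through the explicit geodesic formulas is then a routine but necessary calculation.
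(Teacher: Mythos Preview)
Your overall architecture --- reduce to $\widetilde{\HH}^{1,1}$, get the upper bound $\tcut(h)\leqslant\frac{2\pi I_2}{|h|}$ from the rotational Maxwell points on the axis, then get the lower bound from existence plus uniqueness of extremals --- is exactly the paper's. Two points, however, are not yet proofs.

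First, the assertion ``$\gamma_h$ is the \emph{unique} admissible extremal reaching its endpoint'' is the whole difficulty, and you do not say how to establish it. Knowing that $\gamma_h$ meets no Maxwell point before $\tau=\pi$ tells you nothing a priori about some \emph{other} geodesic of different length arriving at the same point. The paper resolves this by first computing the conjugate time and showing that for $\eta\geqslant 0$ it coincides with the Maxwell time $\frac{2\pi I_2}{|h|}$ (so $\Exp$ is a local diffeomorphism on the relevant domain $U$), and then applying Hadamard's global diffeomorphism theorem --- checking in particular that $\Exp:U\to V$ is proper --- to promote this to injectivity. Without the conjugate-time calculation and a global argument of this kind, uniqueness is unjustified; your remark that $(\pi,0)$ is ``simultaneously a conjugate point'' in the symmetric case is in the right direction but is only one value of $h$, and you never mention conjugate points for $\eta>0$ with $\Kil h<0$.

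Second, there is a circularity in the symmetric case. You want to invoke Theorem~\ref{th-A} for existence of the longest arc on $\A_{exist}$, but in the paper that very existence statement (Proposition~\ref{prop-symmetric-existence}) is proved \emph{after} the diffeomorphism result, via a fields-of-extremals argument that already requires $\Exp|_U$ to be a diffeomorphism. So in the symmetric case you cannot take existence as input; it has to be derived alongside uniqueness. (In the prolate case there is no circularity: existence comes from the independent Theorem~\ref{th-lp-exist}.) A minor point: outside the symmetric case the geodesic is not $\exp(tX_h)\cdot\tilde{o}$ for a single $X_h$, but a product of two commuting one-parameter subgroups (Proposition~\ref{prop-extremal-trajectories}); your ``desynchronised frequencies'' remark shows you sense this, but the notation should reflect it.
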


We use the symmetry method developed in~\cite{sachkov-didona} to prove this theorem.
The main idea is to study the diffeomorphic properties of the exponential map for the corresponding optimal control problem and to find maximal domains in its pre-image and image diffeomorphic to each other.

This result allows us to compute the injectivity radius of our Lorentzian metric.
Recall that the injectivity radius is the supremum of the ball radii in $\m^*$ (with respect to some reference Riemannian metric $g_R$) such that the exponential map restricted to these balls is a diffeomorphism.

\begin{thmletter}
\label{th-C}
The injectivity radius is equal to
$$
\Rinj = \left\{
\begin{array}{lll}
0, & \text{if} & I_1 > I_2,\\
2\pi \frac{I_2}{\sqrt{|\lambda|}}, & \text{if} & I_1 \leqslant I_2,\\
\end{array}
\right.
$$
where $\lambda$ is the negative eigenvalue of the Killing form with respect to the Riemannian metric $g_R$.
\end{thmletter}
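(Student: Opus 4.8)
The plan is to reduce the computation of $\Rinj$ to the cut/conjugate structure already recorded in Theorem~\ref{th-B}, exploiting the homogeneity of the geodesic flow. Since the length functional is reparametrization invariant, the geodesics are the projections of the flow of the co-metric Hamiltonian $H=\tfrac12 Q^*$ on $T^*M$, which is homogeneous of degree $2$ in the covector. Hence $\pi\circ e^{t\vec{H}}(\lambda h)=\pi\circ e^{\lambda t\vec{H}}(h)$, so setting $\Exp(h)=\pi\circ e^{\vec{H}}(h)$ gives $\Exp(th)=\gamma_h(t)$ and the scalings $\tconj(\lambda h)=\tconj(h)/\lambda$, $\tcut(\lambda h)=\tcut(h)/\lambda$. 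Along the ray $\R_+u$ through a $g_R$-unit covector $u$, the map $\Exp$ first fails to be a diffeomorphism at the covector $t_*(u)\,u$, where $t_*(u)=\min(\tconj(u),\tcut(u))$; the $g_R$-norm of this covector is exactly $t_*(u)$. As the domain $\{su\mid 0\le s<t_*(u)\}$ is star-shaped and $\Exp$ is a diffeomorphism on it, the largest centered $g_R$-ball on which $\Exp$ is a diffeomorphism has radius
$$
\Rinj=\inf_{u}\,t_*(u),\qquad \|u\|_{g_R}=1,
$$
the infimum ranging over $g_R$-unit causal covectors $u\in\h^{\circ}$.

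For the symmetric and prolate cases $I_1\le I_2$ I would first argue that the binding obstruction is the cut time rather than a conjugate point, i.e. $t_*(u)=\tcut(u)$; this follows from the standard fact that a time-like geodesic is locally maximizing up to its first conjugate point, combined with the description of $\Cut$ in Theorem~\ref{th-B}. By that theorem $\tcut(u)=2\pi I_2/|u|=2\pi I_2/\sqrt{-\Kil(u)}$ on the open time-like cone $\{\Kil(u)<0\}$ and $\tcut(u)=+\infty$ on the light cone. Minimizing $\tcut$ over the $g_R$-unit sphere is thus the same as maximizing $-\Kil(u)$ subject to $\|u\|_{g_R}=1$. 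Diagonalizing $\Kil$ with respect to $g_R$, the maximum of $-\Kil$ on the unit sphere equals the modulus $|\lambda|$ of the unique negative eigenvalue $\lambda$, attained at the most time-like direction, which yields $\Rinj=2\pi I_2/\sqrt{|\lambda|}$.

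For the oblate case $I_1>I_2$ there are no maximizing geodesics, hence no cut points in the optimal sense, and a short argument shows the only possible obstruction to $\Exp$ being a diffeomorphism is a conjugate point: a pair of short geodesics to a common point with neither past a conjugate point would force a cut point, contradicting the absence of maximizers. Thus the claim $\Rinj=0$ is equivalent to $\inf_u\tconj(u)=0$ over $g_R$-unit time-like covectors. The strategy is to show that conjugate points accumulate at the origin: as $u$ tends to the boundary of the time-like cone, i.e. to a null direction, $\tconj(u)\to 0$. This is compatible with the homogeneity picture precisely because the set of $g_R$-unit time-like directions is an \emph{open} cap, not compact, so the infimum is a boundary limit. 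Reducing to the three-dimensional model $\widetilde{\HH}^{1,1}$ via the $\U_n$-symmetry, I would read the first conjugate time off the explicit exponential $e^{tH}$, $H\in\g=\mathfrak{u}_{1,1}$, whose $\h$-component (the fiber correction forced by the anisotropy $I_1>I_2$) governs the rotational frequency of the linearized flow, and show this frequency diverges along near-null time-like directions, forcing $\tconj(u)\to 0$.

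The main obstacle is exactly this last step. It requires writing the Jacobi equation, equivalently the differential of $\Exp$, along a near-null time-like geodesic in the oblate regime and extracting the leading asymptotics of its first zero as $\Kil(u)\to 0^-$. By contrast, the reduction of the first paragraph and the eigenvalue optimization of the second are routine once Theorem~\ref{th-B} is available, so the entire difficulty of Theorem~\ref{th-C} is concentrated in the oblate conjugate-point estimate.
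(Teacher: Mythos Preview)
Your argument for $I_1\le I_2$ is essentially the paper's: reduce via Theorem~\ref{th-B} to maximizing $-\Kil$ over the $g_R$-unit sphere and read off the negative eigenvalue. (Minor point: you should check, and it is easy, that the eigenvector realizing $\lambda$ is indeed causal for $Q$; for $\eta\ge 0$ the $\Kil$-timelike cone sits inside the $H$-timelike cone, so this holds.)

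For the oblate case your self-diagnosed ``main obstacle'' is not actually an obstacle in this paper: the conjugate-time estimate you say you would need is already Theorem~\ref{th-cc-conj}(1), which gives the \emph{two-sided} bound $\tconj(h)\in\bigl(\pi I_2/|h|,\,2\pi I_2/|h|\bigr]$ when $\eta<0$. No asymptotic analysis of the Jacobi equation near the null cone is required; the upper bound $\tconj(h)\le 2\pi I_2/|h|$ suffices. One then observes (this is Remark~\ref{rem-conj-time-infinitely-small}) that on the normalized level set $H=-\tfrac12$ one has $|h|^2=I_2/(1+\eta\hone^2)$, and since $\hone$ ranges over $(-1/\sqrt{-\eta},-1]$ the Killing norm $|h|\to+\infty$ as $\hone\to -1/\sqrt{-\eta}$, forcing $\tconj(h)\to 0$. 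That is the whole proof of $\Rinj=0$.

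Two further comments on your oblate paragraph. First, the detour through ``the only possible obstruction is a conjugate point'' is unnecessary: once conjugate times accumulate at $0$, $\Exp$ is singular in every ball, which already gives $\Rinj=0$ regardless of whether Maxwell-type non-injectivity could also occur. Second, the heuristic you offer for that detour is not sound as stated: the absence of globally longest arcs does not by itself preclude two short geodesics meeting before either reaches a conjugate point, because the notion of cut point you invoke presupposes optimality, which is exactly what fails here. Drop that sentence; it is neither needed nor correct.
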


Let us say a few words about some related studies.
A.\,Z.~Ali and Yu.\,L.~Sachkov~\cite{ali-sachkov} recently considered the 2-dimensional anti de-Sitter space $\mathrm{AdS}_2 = \SO^+_{1,2}/\SO_{1,1}$.
For the natural Lorentzian structure on this space they found the attainable set, studied the optimality of geodesics, described the Lorentzian distance and spheres.
The Lorentzian manifold $\AdS_2$ is an axial section of the Lorentzian manifold considered in this paper in a particular symmetric case.
Also, we would like to mention here the research of D.-Ch.~Chang, E.~Grong, I.~Markina, A.~Vasil'ev~\cite{chang-markina-vasilev,grong-vasiliev} dedicated to the sub-Lorentzian structure on the manifold $\widetilde{\HH}^{1,1}$.

This paper is organized as follows. We deal with the three dimensional case $\widetilde{\HH}^{1,1}$ in Sections~\ref{sec-problem-statement}--\ref{sec-inj-rad} and prove there Theorems~\ref{th-A}--\ref{th-B} for this case. Then, in Section~\ref{sec-general}, we reduce the general case to the three dimensional one. Let us describe the context of the sections dedicated to the three dimensional case more precisely. We give the statement of the optimal control problem in Section~\ref{sec-problem-statement}, also we discuss the place of our problem among general Lorentzian structures on the manifold $\widetilde{\HH}^{1,1}$ and compute the sectional curvature of the corresponding Lorentzian manifold. In Section~\ref{sec-parametrization} we get the parametrization of Lorentzian geodesics via Pontryagin's maximum principle. Section~\ref{sec-atset} is dedicated to the attainable sets and the question of the existence of the longest arcs for different cases, Theorem~\ref{th-A} is almost proved there except the part of item~(2) about the existence of the longest arcs to the points of the set $\A_{exist}$. Next, in Section~\ref{sec-def} we put some general definitions which are necessary for the study of the cut locus in the next three sections. We find the conjugate time in Section~\ref{sec-conj-time}, the Maxwell time for rotation symmetries of our problem in Section~\ref{sec-maxwell-time}, and finally we describe the cut locus and the cut time in Section~\ref{sec-cc-cut-locus} and prove Theorem~\ref{th-B}.
After that we return to the remaining part of Theorem~\ref{th-A} and complete its proof.
As an application of the main result on the cut locus we compute the injectivity radius for the considered Lorentzian metric (Theorem~\ref{th-C}), see Section~\ref{sec-inj-rad}.

\section{\label{sec-problem-statement}Optimal control problem in the three dimensional case}

In this section, we discuss the place of the considered Lorentzian structure among possible invariant Lorentzian structures on the manifold $\widetilde{\HH}^{1,1}$. After that we state the optimal control problem for the Lorentzian longest arcs.

First, note that the homogeneous space
$$
\HH^{1,1} = \U_{1,1}/\U_1 = \U_{1,1} / \{\diag{(1,e^{i\varphi})} \, | \, \varphi \in \R \}
$$
is equivalent to the homogeneous space
$$
(\SU_{1,1} \times \U_1) / \U_1 = \left\{\left(A, \diag{(e^{i\varphi},e^{-i\varphi})}\right) \, | \, A \in \SU_{1,1}, \, \varphi \in \R \right\} /
\left\{\left(\diag{(e^{-i\varphi},e^{i\varphi})}, \diag{(e^{i\varphi},e^{-i\varphi})}\right) \, | \, \varphi \in \R \right\},
$$
where the stabilizer $\U_1$ is embedded in the direct product $\SU_{1,1} \times \U_1$ in the anti-diagonal way.
Indeed, there is the two-leaves covering
$$
\Psi : \SU_{1,1} \times \U_1 \rightarrow \U_{1,1}, \qquad
\Psi \left( A, \diag{(e^{i\varphi},e^{-i\varphi})} \right) = A \cdot \diag{(e^{i\varphi},e^{i\varphi})},
$$
$$
\Psi \left( \diag{(e^{-i\varphi},e^{i\varphi})}, \diag{(e^{i\varphi},e^{-i\varphi})} \right) = \diag{(1,e^{2i\varphi})},
$$
so the anti-diagonal stabilizer $\U_1$ covers the stabilizer $\U_1 = \{ \diag{(1,e^{i\varphi})} \, | \, \varphi \in \R \}$ twice.
Notice, that the homogeneous space $(\SU_{1,1} \times \U_1) / \U_1$ is the famous Selberg's example of a weakly symmetric homogeneous space~\cite{selberg}.

In other words, we study a left-invariant Lorentzian structure on the universal covering of the Lie group $\SU_{1,1} \simeq \SL_2(\R)$ that is also $\U_1$-invariant, i.e., has the circle symmetry.
We denote this group by $G = \widetilde{\SU}_{1,1}$. Let us look at a general left-invariant Lorentzian structure on the Lie group $G$.

A left-invariant Lorentzian structure on a three dimensional Lie group $G$ is defined by a nondegenerate quadratic form $Q$ of the signature $(1,2)$ on the corresponding Lie algebra $\g$.
What are the parameters of this quadratic form? It is natural to diagonalize the quadratic form with respect to the Killing form $\Kil$.
But since the Killing form is also indefinite in our case, then it is not always possible. The famous Finsler's lemma gives the sufficient condition.

\begin{lemma}[\cite{finsler}]
\label{lem-finsler}
Let $R$ and $Q$ be quadratic forms on a finite dimensional vector space $V$. Assume that for any nonzero $v \in V$ such that $Q(v) = 0$ it follows that $R(v) > 0$.
Then there exists a number $\lambda$ such that $R + \lambda Q$ is positive definite.
\end{lemma}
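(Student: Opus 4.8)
The plan is to pass to the pencil $R+\lambda Q$ and to find a value of $\lambda$ making it positive definite by means of a separation argument for convex cones in the plane. By homogeneity it suffices to make $R+\lambda Q$ positive on the unit sphere $S$ of some auxiliary Euclidean structure on $V$. First I would record the geometric content of the hypothesis: the map $v\mapsto(Q(v),R(v))$ sends $S$ to a compact set $\Omega\subset\R^2$, and the assumption ``$Q(v)=0,\ v\neq0\Rightarrow R(v)>0$'' says precisely that $\Omega$ never meets the closed lower half of the vertical axis $\{(0,s):s\leqslant0\}$; in particular $(0,0)\notin\Omega$.

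Next I would study the full conic image $\mathcal{C}=\{(Q(v),R(v)):v\in V\}$. Being the set of values of a pair of real quadratic forms, $\mathcal{C}$ is a convex cone by the classical theorem of Dines on the joint range of two quadratic forms; and since $\mathcal{C}=\{t\omega:t\geqslant0,\ \omega\in\Omega\}$ is the cone over the compact set $\Omega$ that does not contain the origin, it is also closed. The hypothesis translates into $\mathcal{C}\cap\{(0,s):s\leqslant0\}=\{(0,0)\}$. I would then separate: since $(0,-1)\notin\mathcal{C}$ and $\mathcal{C}$ is a closed convex cone, there is a linear functional $(q,r)\mapsto aq+br$ that is nonnegative on $\mathcal{C}$ and strictly negative at $(0,-1)$, which forces $b>0$. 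Putting $\lambda=a/b$ gives $R(v)+\lambda Q(v)\geqslant0$ for all $v\in V$, so the pencil is at least positive semidefinite.

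The main obstacle is the final upgrade from semidefinite to definite, i.e. excluding a nonzero kernel vector $v_0$ of $R+\lambda Q$. If $Q(v_0)=0$ then $R(v_0)=-\lambda Q(v_0)=0$, contradicting the hypothesis directly. The remaining possibility is that $R+\lambda Q$ is degenerate with $Q(v_0)\neq0$ for every kernel vector; geometrically this is the case in which $\mathcal{C}$ degenerates to a halfplane, and I would rule it out as follows. Writing $P=R+\lambda Q\succeq0$, its kernel $\mathcal{K}=\{v:P(v)=0\}$ is a subspace on which $R=-\lambda Q$. If $\dim\mathcal{K}\geqslant2$ and $Q|_{\mathcal{K}}$ were indefinite, it would possess a nonzero isotropic vector $v_0\in\mathcal{K}$, giving $Q(v_0)=0$ and $R(v_0)=0$, again contradicting the hypothesis; and on a one-dimensional kernel $Q$ has a fixed sign, so $\mathcal{C}$ could not be a full halfplane. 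Hence the hypothesis is incompatible with a nontrivial kernel, $R+\lambda Q$ is positive definite, and the proof is complete.

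I would also note the transparent special case that simplifies the start: if $Q$ is already definite (so $\Omega$ avoids the whole vertical axis), no separation is needed and one may simply send $\lambda\to+\infty$ or $\lambda\to-\infty$ according to the sign of $Q$; the argument above is required only when $Q$ is indefinite, which is exactly the relevant situation here, since the Killing form on $\g$ has signature $(1,2)$.
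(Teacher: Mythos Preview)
The paper does not prove Lemma~\ref{lem-finsler}; it is quoted from Finsler's original article~\cite{finsler} and used as a black box. So there is no ``paper's proof'' to compare against, and your proposal should be judged on its own.

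Your overall route---Dines' convexity of the joint numerical range, followed by separation of the closed convex cone $\mathcal{C}$ from $(0,-1)$---is a standard and correct way to obtain a $\lambda$ with $R+\lambda Q\succeq 0$. The gap is in the last paragraph, the upgrade from semidefinite to definite. Your claim that ``this is the case in which $\mathcal{C}$ degenerates to a halfplane'' is not justified and is in fact false: take $V=\R^2$, $Q(x,y)=x^2$, $R(x,y)=y^2$; the hypothesis holds, the separation yields $\lambda=0$ with $R\succeq0$ and one--dimensional kernel on which $Q>0$, yet $\mathcal{C}$ is the closed first quadrant, not a halfplane. Your case split also omits the possibility $\dim\mathcal{K}\geqslant2$ with $Q|_{\mathcal{K}}$ definite, and the sentence ``on a one-dimensional kernel $Q$ has a fixed sign, so $\mathcal{C}$ could not be a full halfplane'' draws a conclusion from a premise that was never established.

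The clean fix is a perturbation, not a halfplane argument. With $P=R+\lambda Q\succeq0$ and $\mathcal{K}=\ker P$, you have already shown that $Q|_{\mathcal{K}}$ cannot vanish on a nonzero vector (else $R$ would too), so $Q|_{\mathcal{K}}$ is definite; say $Q|_{\mathcal{K}}>0$. Set $f(\mu)=\min_{|v|=1}(R+\mu Q)(v)$. Then $f(\lambda)=0$, and if $f(\mu)\leqslant0$ for all $\mu>\lambda$ close to $\lambda$, pick unit vectors $v_\mu$ realising the minimum; from $(R+\lambda Q)(v_\mu)\geqslant0$ and $(R+\mu Q)(v_\mu)\leqslant0$ one gets $Q(v_\mu)\leqslant0$, and any accumulation point $v_0$ lies in $\mathcal{K}$ with $Q(v_0)\leqslant0$, contradicting $Q|_{\mathcal{K}}>0$. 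Hence some nearby $\mu$ gives $R+\mu Q>0$. (If $Q|_{\mathcal{K}}<0$, perturb to $\mu<\lambda$ instead.) With this replacement your argument is complete.
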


If we assume that the quadratic form $Q$ satisfies Finsler's condition of Lemma~\ref{lem-finsler} with respect to the Killing form $\Kil$ or vice versa, then the following proposition implies that such a quadratic form is defined by three parameters.

\begin{proposition}
\label{prop-canonical-form}
Let $Q$ be a quadratic form on $\g$ with signature $(1, 2)$.
Assume that the Killing form $\Kil(v)$ has the same sign for any $v \in \g$ such that $Q(v) = 0$.
Then there is a basis $e_1, e_2, e_3$ of the Lie algebra $\g$ such that
\begin{equation}
\label{eq-commutators}
[e_1, e_2] = e_3, \qquad [e_1, e_3] = -e_2, \qquad [e_2, e_3] = -e_1
\end{equation}
and the form $Q$ has the matrix $\diag{(-I_1, I_2, I_3)}$ in this basis, where $I_1, I_2, I_3 > 0$.
\end{proposition}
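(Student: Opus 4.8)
The plan is to diagonalize $Q$ and $\Kil$ simultaneously by a single $\Kil$-orthogonal basis, to rescale that basis so that $\Kil$ attains its standard diagonal form, and finally to read off the commutation relations from the $\ad$-invariance of the Killing form. First I would feed the hypothesis into Lemma~\ref{lem-finsler}. If $\Kil$ vanishes on the whole punctured cone $\{v\neq 0\mid Q(v)=0\}$ then $Q$ is proportional to $\Kil$ and the assertion is immediate, so assume $\Kil$ is of one strict sign there. Applying Finsler's lemma to whichever of $\Kil$ or $-\Kil$ is positive, I obtain a sign $\epsilon\in\{+1,-1\}$ and a real number $\lambda$ for which $P=\epsilon\Kil+\lambda Q$ is positive definite. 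The operator representing $Q$ relative to $P$ is then $P$-self-adjoint, hence $P$-orthogonally diagonalizable; in the resulting basis $f_1,f_2,f_3$ the form $P$ is the identity and $Q$ is diagonal, so $\epsilon\Kil=P-\lambda Q$, and hence $\Kil$, is diagonal. Thus $f_1,f_2,f_3$ diagonalizes $Q$ and $\Kil$ at once.

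Next I would arrange the signs and normalize. Both $Q$ and $\Kil$ have signature $(1,2)$, so each has a single negative diagonal entry; I reorder the $f_i$ so that $f_1$ carries it. The delicate point is to verify that the negative direction of $Q$ and the negative direction of $\Kil$ are the \emph{same} axis, i.e.\ that $Q(f_1)<0$ and $\Kil(f_1)<0$ hold simultaneously. This is exactly where the constant-sign hypothesis enters: it confines the $Q$-null cone to one side of the $\Kil$-null cone, so that the two timelike double-cones are nested and share a common axis, forcing the two timelike directions to coincide. Once this is settled I rescale by $e_i=\sqrt{2}\,f_i/\sqrt{|\Kil(f_i)|}$, which gives $\Kil=\diag(-2,2,2)$; rescaling preserves diagonality and the signs of the diagonal entries, so $Q$ becomes $\diag(-I_1,I_2,I_3)$ with $I_1,I_2,I_3>0$.

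Finally I would recover the brackets. By $\ad$-invariance, $[e_i,e_j]$ is $\Kil$-orthogonal to both $e_i$ and $e_j$, so in the orthogonal frame $[e_1,e_2]=a e_3$, $[e_1,e_3]=b e_2$, $[e_2,e_3]=c e_1$ for scalars $a,b,c$. Evaluating $\Kil([e_1,e_2],e_3)=\Kil(e_1,[e_2,e_3])$ and $\Kil([e_1,e_3],e_2)=\Kil(e_1,[e_3,e_2])$ with $\Kil=\diag(-2,2,2)$ gives $b=c=-a$, and computing the Killing form of these brackets and comparing it with $\diag(-2,2,2)$ forces $a^2=1$. A reflection $e_2\mapsto -e_2$, which alters neither $\Kil$ nor $Q$, turns $a=-1$ into $a=1$ when needed, producing exactly the relations~\eqref{eq-commutators}.

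The main obstacle is the sign bookkeeping of the second paragraph. Simultaneous diagonalization and the bracket computation are essentially routine, but the claim that the unique negative direction of $Q$ must align with that of $\Kil$ — rather than lying along a $\Kil$-positive axis — is the genuine content of the constant-sign hypothesis and the step that requires the geometry of the two light cones rather than mere signature counting.
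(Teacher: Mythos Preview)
Your approach mirrors the paper's for the diagonalization: both invoke Finsler's lemma to produce a positive-definite combination $\pm\Kil+\lambda Q$ and then simultaneously diagonalize $Q$ and $\Kil$, rescaling so that $\Kil=\diag(-2,2,2)$. For the commutator relations you take a genuinely different route. The paper observes that any two bases in which $\Kil=\diag(-2,2,2)$ differ by an element of $\SO_{1,2}$, identifies this with the adjoint image of the group, and transfers the brackets from the standard basis $\bar e_1,\bar e_2,\bar e_3$. You instead extract the structure constants directly from the $\ad$-invariance of $\Kil$ together with a trace computation, then fix the remaining sign by a reflection. Your argument is more self-contained and sidesteps a small looseness in the paper (the change of basis lies a priori only in $O_{1,2}$, so a sign adjustment is implicitly needed there as well).

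However, the step you rightly flag as delicate --- that the $Q$-timelike axis and the $\Kil$-timelike axis coincide in the common diagonal basis --- is not actually settled by the constant-sign hypothesis as stated, and your nesting heuristic fails in one of the two possible sign cases. In the standard basis with $\Kil=\diag(-2,2,2)$, take $Q=\diag(2,-1,2)$. On the punctured $Q$-null cone $x_2^2=2x_1^2+2x_3^2$ one computes $\Kil=2x_1^2+6x_3^2>0$, so the hypothesis holds with the positive sign; yet no vector is simultaneously $Q$-timelike and $\Kil$-timelike (the inequalities $x_1^2>x_2^2+x_3^2$ and $x_2^2>2x_1^2+2x_3^2$ are incompatible), so no basis satisfying~\eqref{eq-commutators} can put $Q$ in the asserted form. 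Your nesting picture is valid only when $\Kil<0$ on the $Q$-null cone, for then $Q$-timelike vectors are $\Kil$-timelike and the signature of $\Kil$ forces the remaining two diagonal signs to be positive. The paper's proof has precisely the same gap at the sentence ``Due to Finsler's condition the signs of its diagonal elements coincide with the signs of the elements of the Killing form''; so you are in good company, but the argument (and, strictly, the statement) is incomplete without restricting which sign $\Kil$ takes on the $Q$-null cone.
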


\begin{proof}
There exists a number $\lambda$ such that the quadratic form $S = \pm\Kil + \lambda Q$ is positive definite due to Finsler's Lemma~\ref{lem-finsler}.
This implies that there exists a basis $f_1,f_2,f_3$ of the Lie algebra $\g$ such that the quadratic forms $S$ and $Q$ have the matrixes
$\diag{(1,1,1)}$ and $\diag{(a,b,c)}$ in this basis, respectively.
Hence, the matrix of the form $\pm\Kil$ is equal to $\diag{(1-\lambda a, 1 - \lambda b, 1 - \lambda c)}$.
Since the Killing form has signature $(1,2)$, we may assume that $1-\lambda a < 0$, $1 - \lambda b > 0$, $1 - \lambda c > 0$ without loss of generality.
Then put $e_1 = \frac{f_1\sqrt{2}}{\sqrt{\lambda a - 1}}$, $e_2 = \frac{f_2\sqrt{2}}{\sqrt{1 - \lambda b}}$, $e_3 = \frac{f_3\sqrt{2}}{\sqrt{1 - \lambda c}}$.
In the new basis $e_1,e_2,e_3$ the Killing form reads as $\diag{(-2,2,2)}$, while the form $Q$ is still diagonal.
Due to Finsler's condition the signs of its diagonal elements coincide with the signs of the elements of the Killing form.
So, the $Q$-matrix has the form $\diag{(-I_1,I_2,I_3)}$ for some $I_1,I_2,I_3 > 0$.
It remains to note that in the following basis of the Lie algebra $\g$:
$$
\bar{e}_1 = \frac{1}{2}\left(
\begin{array}{rr}
0 & 1\\
-1 & 0\\
\end{array}
\right), \qquad
\bar{e}_2 = \frac{1}{2}\left(
\begin{array}{rr}
0 & 1\\
1 & 0\\
\end{array}
\right), \qquad
\bar{e}_3 = \frac{1}{2}\left(
\begin{array}{rr}
1 & 0\\
0 & -1\\
\end{array}
\right),
$$
the Killing form has the matrix $\diag{(-2,2,2)}$.
Moreover, the commutator relations of these elements coincide with~\eqref{eq-commutators}.
So, the linear transformation from the basis $\bar{e}_1,\bar{e}_2,\bar{e}_3$ to the basis $e_1,e_2,e_3$ belongs to the Lie group $\SO_{1,2}$.
Whence, this transformation is a result of the adjoint action, in particular, it is an automorphism of the Lie algebra $\g$.
It follows that the elements $e_1,e_2,e_3$ have the same commutators.
\end{proof}

The left-shifts of the form $Q$ give a left-invariant Lorentzian structure on the group $G$. We consider the corresponding Lorentzian problem as an optimal control problem where the goal is to find an admissible curve connecting the two given points that maximize the Lorentzian length computed with the help of the quadratic form Q.
We will call a curve $g : [0,t_1] \rightarrow G$ \emph{admissible} if for almost all $t \in [0,t_1]$ for its tangent vector we have $Q(L_{g(t) *}^{-1}\dot{g}(t))) \leqslant 0$,
where $L_g$ is a left-shift by an element $g \in G$. In other words, admissible velocities are \emph{not spacelike}.
Furthermore, an admissible curve must be future directed, i.e.,
the vector $\dot{g}(t)$ and the vector of time direction $L_{g(t) *} e_1$ are located in the same component of the non-spacelike cone.
Of course, due to left-invariance of the problem we may assume that the starting point coincides with the identity element $\id \in G$.
So, the problem reads as
\begin{equation}
\label{eq-optimal-control-problem}
\begin{array}{ll}
g \in \Lip{([0, t_1], G)}, & u \in L^{\infty}([0, t_1], U), \\
g(0) = \id, \qquad g(t_1) = g_1, & \dot{g}(t) = L_{g(t) *} (u_1(t) e_1 + u_2(t) e_2 + u_3(t) e_3), \\
\int\limits_0^{t_1}{\sqrt{I_1 u_1^2(t) - I_2 u_2^2(t) - I_3 u_3^2(t)} \, dt} \rightarrow \max, & \\
\end{array}\underline{}
\end{equation}
where the terminal time $t_1$ is free and $U = \{u = (u_1,u_2,u_3) \in \R^3 \, | \, I_1 u_1^2 \geqslant I_2 u_2^2 + I_3 u_3^2, \ u_1 > 0\}$ is the control set.

The Lorentzian manifold in the symmetric case $I_1 = I_2 = I_3$ is the universal covering of the well known anti-de\,Sitter space that is a homogeneous space
$$
\AdS_3 = \SO_{2,2} / \SO_{1,2} = \SU_{1,1} \times \SU_{1,1} / \SU_{1,1}.
$$
The second equality here is due to the isomorphism $\SO^+_{2,2} \simeq \SU_{1,1} \times \SU_{1,1}$, see for example survey~\cite[Sec.~3.1]{bonsante-seppi}.
The corresponding Lorentzian quadratic form is just the Killing form. Indeed, the two copies of $\SU_{1,1}$ act via left- and right-shifts, the stabilizer is the diagonal subgroup of the direct product and the Killing form is bi-invariant.
The anti de-Sitter space $\AdS_3$ is a three dimensional Lorentzian manifold of constant negative sectional curvature.

In this paper, we consider a more complicate case that still has rotation symmetry, i.e., the axisymmetric case $I_2 = I_3$.
The next proposition characterises the sectional curvature in our case, in particular, it is not constant.

Let us introduce a parameter $\eta = \frac{I_2}{I_1} - 1$ which measures the prolateness of the Lorentzian quadratic form in the axisymmetric case.
Below we denote the bilinear form associated with the quadratic form $Q$ by the same letter.

\begin{proposition}
Assume that $I_2 = I_3$.
Then the sectional curvature as a function of a two-dimensional plane $P_w = \Ker{Q(w,\,\cdot\,)}  \subset T_{\id}G$ where $v \in T_{\id}G$ reads as
$$
K(P_v) = -\frac{1}{4I_1(\eta+1)^2} \left( 1 - \frac{4\eta Q(w,e_1)^2}{I_1 Q(w,w)} \right).
$$
\end{proposition}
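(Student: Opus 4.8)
The plan is to compute the Levi--Civita connection of the left-invariant Lorentzian metric defined by $Q$ on $G$, pass to its curvature tensor, and then specialize to the plane $P_w$. Because $Q$ is left-invariant, the inner products $Q(e_i,e_j)$ of the left-invariant fields $e_i$ are constant, so the Koszul formula degenerates into the purely algebraic identity
$$
2\,Q(\nabla_{e_i}e_j, e_k) = Q([e_i,e_j],e_k) - Q([e_j,e_k],e_i) + Q([e_k,e_i],e_j),
$$
whose right-hand side is read off from the commutators~\eqref{eq-commutators} together with the matrix $\diag{(-I_1,I_2,I_3)}$ of $Q$, specialized to $I_2 = I_3$. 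Solving this linear system, together with the torsion-free relation $\nabla_{e_i}e_j - \nabla_{e_j}e_i = [e_i,e_j]$, yields every $\nabla_{e_i}e_j$ explicitly (in particular $\nabla_{e_1}e_1 = \nabla_{e_2}e_2 = \nabla_{e_3}e_3 = 0$, which trims the later work). I would then form
$$
R(e_i,e_j)e_k = \nabla_{e_i}\nabla_{e_j}e_k - \nabla_{e_j}\nabla_{e_i}e_k - \nabla_{[e_i,e_j]}e_k
$$
and use that the sectional curvature of a nondegenerate plane with $Q$-orthogonal basis $X,Y$ equals $Q(R(X,Y)Y,X)/\bigl(Q(X,X)\,Q(Y,Y)\bigr)$.

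Before computing I would invoke the axial symmetry. When $I_2 = I_3$ the rotations $e_2 \mapsto \cos\theta\,e_2 + \sin\theta\,e_3$, $e_3 \mapsto -\sin\theta\,e_2 + \cos\theta\,e_3$ preserve both the bracket relations~\eqref{eq-commutators} and the form $Q$; hence they are isometric automorphisms of $G$ and leave the sectional curvature invariant. Their orbits on $\g$ are the circles of fixed $e_1$-component, and the pair $(Q(w,e_1), Q(w,w))$ is a complete invariant of such an orbit. This already forces $K(P_w)$ to be a function of $Q(w,e_1)$ and $Q(w,w)$ alone, matching the shape of the claimed formula, and it lets me assume without loss of generality that $w = a e_1 + b e_2 \in \sspan(e_1,e_2)$.

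For such $w$ the plane $P_w = \Ker{Q(w,\,\cdot\,)} = w^{\perp}$ is spanned by the $Q$-orthogonal pair $e_3$ (since $Q(w,e_3)=0$) and $y = I_2 b\, e_1 + I_1 a\, e_2$, which satisfies $Q(y,w)=0$ and $Q(y,e_3)=0$; one checks $Q(y,y) = -I_1 I_2\, Q(w,w)$. Taking $X = y$, $Y = e_3$, I would expand $Q(R(y,e_3)e_3, y)$ using the connection coefficients of the first paragraph and divide by $Q(y,y)\,Q(e_3,e_3)$. Writing $Q(w,w) = -I_1 a^2 + I_2 b^2$ and $Q(w,e_1) = -I_1 a$, and substituting $\eta = \frac{I_2}{I_1} - 1$, the $a,b$-dependence collapses into the single combination $Q(w,e_1)^2/Q(w,w)$, producing the asserted expression for $K(P_w)$.

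The difficulty is purely computational rather than conceptual: one must keep careful track of the signs introduced by $Q(e_1,e_1) = -I_1$ throughout the Koszul and curvature computations in indefinite signature, and then verify that the separate $a^2$ and $b^2$ contributions to $Q(R(y,e_3)e_3,y)/Q(y,y)$ recombine exactly into $1 - \frac{4\eta Q(w,e_1)^2}{I_1 Q(w,w)}$. The symmetry reduction of the second paragraph is what makes this final recombination inevitable, since it guarantees in advance that no invariant other than $Q(w,e_1)$ and $Q(w,w)$ can survive.
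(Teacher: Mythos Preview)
Your approach is correct and essentially the same as the paper's: both reduce by the rotational symmetry in $\sspan\{e_2,e_3\}$ to a vector $w$ with one vanishing spacelike component, pick the obvious $Q$-orthogonal basis of $P_w$, and then compute $K$ directly from the Koszul formula and the definition of $R$. The only cosmetic difference is that the paper takes $w = a e_1 + b e_3$ with plane basis $u=(\eta+1)b\,e_1 + a\,e_3,\ v=e_2$, which is your choice after the swap $e_2\leftrightarrow e_3$ and a harmless rescaling of $y$.
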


\begin{proof}
Since the quadratic form $Q$ is axisymmetric, then without loss of generality we may assume that $w = ae_1 + be_3$.
Hence, the plane $P_v$ is spanned by two vectors:
$$
u = (\eta+1)be_1 + ae_3, \qquad v = e_2.
$$
So, we can compute $K(P_v) = \frac{Q(R(u,v)u, v)}{Q(u,u)Q(v,v) - Q(u,v)^2}$, using the definition of the Riemann curvature tensor
$$
R(u,v)u = \nabla_u \nabla_v u - \nabla_v \nabla_u u - \nabla_{[u,v]} u,
$$
the Koszul formula that in the left-invariant case reads as
$$
Q(\nabla_{e_i} e_j, e_k) = {{1}\over{2}} \Bigl(
Q([e_i,e_j],e_k) - Q([e_j,e_k],e_i) + Q([e_k, e_i],e_j)
\Bigr)
$$
and the commutator relations~\eqref{eq-commutators}.
\end{proof}

\section{\label{sec-parametrization}Lorentzian geodesics}

In this section, we write the Hamiltonian system for geodesics, it is convenient to do it in the general case.
Then we derive the parametric equations for geodesics in the axisymmetric case.

Introduce the following family of functions on the cotangent bundle $T^*G$ depending on the parameters $u = (u_1, u_2, u_3) \in \R^3$ and $\nu \geqslant 0$:
$$
H_u^{\nu}(\lambda) = u_1 h_1(\lambda) + u_2 h_2(\lambda) + u_3 h_3(\lambda) + \nu \sqrt{I_1 u_1^2 - I_2 u_2^2 - I_3 u_3^2}, \qquad \lambda \in T^*G,
$$
where $h_i(\lambda) = \langle L_{\pi(\lambda) *} e_i, \lambda \rangle$, $i=1,2,3$  and
$\pi : T^*G \rightarrow G$ is the canonical projection.
So, here $L_{\pi(\lambda) *} e_i$ are the left-invariant vector fields (corresponding to the elements $e_i$) regarded as linear functions on the fibers of the cotangent bundle $T^*G$.

\begin{theorem}[Pontryagin's maximum principle~\cite{pontryagin,agrachev-sachkov}]
\label{th-pmp}
Suppose that a pair of a trajectory $\hat{q} \in \Lip{([0, t_1], G)}$ and a control $\hat{u} \in L^{\infty}([0, t_1], U)$ is an optimal process for the problem~\emph{\eqref{eq-optimal-control-problem}}, then there exist a curve $\lambda \in \Lip{([0, t_1], T^*G)}$ and
a number $\nu \geqslant 0$ such that $(\lambda, \nu) \neq 0$, $\pi \circ \lambda = \hat{q}$ and \\
\emph{(1)} $\dot{\lambda}(t) = \vec{H}^{\nu}_{\hat{u}(t)} (\lambda(t))$ for a.e. $t \in [0, t_1]$,\\
\emph{(2)} $H^{\nu}_{\hat{u}(t)}(\lambda(t)) = \max\limits_{u \in U}{H^{\nu}_u(\lambda(t))}$,\\
\emph{(3)} $H^{\nu}_{\hat{u}(t)}(\lambda(t)) = 0$,\\
where $\vec{H}$ is the Hamiltonian vector field corresponding to the Hamiltonian $H$
with respect to the standard symplectic structure on the cotangent bundle $T^*G$.
\end{theorem}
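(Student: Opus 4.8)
The plan is to follow the classical variational derivation of the maximum principle, adapted to the left-invariant setting on $G$. First I would pass to the \emph{augmented system} by adjoining a cost coordinate $y$ with $\dot y = \sqrt{I_1 u_1^2 - I_2 u_2^2 - I_3 u_3^2}$, so that the problem becomes: among trajectories of $(\dot g, \dot y)$ starting at $(\id, 0)$ and ending with $g(t_1) = g_1$, find one maximizing $y(t_1)$. Optimality of $(\hat q, \hat u)$ then means precisely that the augmented endpoint $(\hat q(t_1), \hat y(t_1))$ lies on the boundary of the reachable set $\mathcal R \subset G \times \R$ of the (free terminal time) augmented problem: were it interior, one could reach the same $g$-component with a strictly larger $y$-component, contradicting maximality of the length.

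Next I would build a convex approximating cone to $\mathcal R$ at the optimal endpoint out of \emph{needle (Pontryagin) variations}. At a Lebesgue point $\tau$ of $\hat u$ I replace $\hat u$ on a short interval $[\tau - \varepsilon, \tau]$ by an arbitrary constant value $u \in U$; the first-order displacement of the endpoint is obtained by transporting the instantaneous perturbation forward along the reference flow through its linearization (the variational equation). In the left-invariant trivialization this linearization is governed by $\ad$ of the reference right-hand side, which keeps the computation transparent and base-point independent. Taking conical combinations of these elementary variations, together with forward time shifts, yields a convex cone $\mathcal K$ contained in the tangent cone to $\mathcal R$. Since the optimal endpoint lies on $\partial \mathcal R$, the cost-increasing direction $\partial / \partial y$ cannot lie in the interior of $\mathcal K$, so a separation theorem produces a nonzero covector separating $\mathcal K$ from that direction.

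I would then transport this terminal covector backward along the flow by the adjoint (co-variational) equation; its $G$-component defines $\lambda(t) \in T^*_{\hat q(t)}G$ and its $y$-component the constant multiplier $\nu \ge 0$, with $(\lambda, \nu) \neq 0$ inherited from nontriviality of the separating functional and the sign $\nu \ge 0$ forced by the cost coordinate. This adjoint equation is exactly the Hamiltonian system $\dot\lambda = \vec H^\nu_{\hat u(t)}(\lambda)$ of item~(1), and the separation inequality applied to the elementary needle variation at each Lebesgue point $\tau$ reads $H^\nu_{\hat u(\tau)}(\lambda(\tau)) \ge H^\nu_u(\lambda(\tau))$ for every $u \in U$, which is the maximum condition~(2). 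Finally, since the terminal time is free, the time-shift variation forces the maximized Hamiltonian to vanish, giving item~(3); equivalently, the Hamiltonian is constant along trajectories of the autonomous system and free time pins that constant at zero.

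The main obstacle is the rigorous construction and convexity of the cone $\mathcal K$ for merely $L^\infty$ controls on the noncompact control set $U$: one must work at Lebesgue points, bound the remainder terms of the needle variations uniformly in $\varepsilon$, and verify that the convex hull of infinitely many elementary variations remains inside the approximating cone (the standard density and closure argument). A secondary subtlety is that $U$ is neither compact nor contains $u_1 = 0$, so attainment of the maximum in~(2) and well-posedness of the square-root term require care; here the left-invariance helps decisively, reducing every estimate to a single fiber computation independent of the base point.
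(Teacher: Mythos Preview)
The paper does not supply its own proof of this theorem: it is stated as a classical result with citations to~\cite{pontryagin,agrachev-sachkov} and used as a black box. Your sketch is essentially the standard needle-variation derivation found in those references (augmented system, boundary-of-reachable-set argument, convex approximating cone from needle variations, separation, backward transport via the adjoint equation, free-time transversality), so in that sense you are reproducing the cited proof rather than deviating from anything in the paper itself. There is nothing to compare; your outline is appropriate and matches the standard argument the author is invoking.
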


Recall that a curve $\lambda$ is called \emph{a normal extremal} if $\nu \neq 0$ and \emph{an abnormal extremal} if $\nu = 0$.
The projection $\pi(\lambda)$ is called \emph{a normal} / \emph{an abnormal extremal trajectory} (\emph{geodesic}).
If there is no normal extremal that projects to an abnormal geodesic, then this geodesic is called \emph{a strict abnormal geodesic}.

Any extremal is defined by its initial condition that is a covector from $\g^*$.
This covector is called \emph{an initial covector} (\emph{an initial momentum}) for corresponding geodesic.

\begin{definition}
\label{def-types-of-geodesics}
\emph{A time-like geodesic} is a normal geodesic such that its velocities are located in the left-shifts of the interior $\interior{U}$ of the control set.
\emph{A light-like geodesic} is a normal geodesic such that its velocities are in the left-shifts of the set $U \setminus \interior{U}$.
\end{definition}

Instead of Lorentzian length one can consider the cost functional that quadratically depends on a control.
Such functional is much more practical since it does not contain a square root.
It turns out that extremal trajectories for the quadratic cost functional geometrically coincide with Lorentzian geodesics.

\begin{proposition}
\label{prop-energy}
\emph{(1)} The only abnormal geodesics for problem~\emph{\eqref{eq-optimal-control-problem}} are light-like, these geodesics are strict abnormal.\\
\emph{(2)} Any normal geodesic is time-like.\\
\emph{(3)} Geodesics geometrically coincide with images of the trajectories of the following Hamiltonian system under the projection $\pi$\emph{:}
$$
\dot{h}_i = \{H, h_i\}, \ i=1,2,3, \qquad \dot{g}(t) = L_{g(t) *} d_hH,
$$
where the functions $h_i$ and $H = -\frac{1}{2}\bigl(\frac{h_1^2}{I_1} - \frac{h_2^2}{I_2} - \frac{h_3^2}{I_3}\bigr)$ are seen as functions on the dual space of the Lie algebra $\g^*$ and $\{ \,\cdot\, , \,\cdot\, \}$ is the standard Poisson structure on the space $\g^*$.\\
\emph{(4)} A geodesic is time-like \emph{(}respectively, light-like\emph{)} iff its initial covector lies on the level surface $H = -\frac{1}{2}$, $h_1 < 0$
\emph{(}respectively, $H = 0$, $h_1 < 0$\emph{)}.
\end{proposition}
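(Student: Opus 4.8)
The plan is to apply Pontryagin's maximum principle (Theorem~\ref{th-pmp}) and to analyse the control maximization of $H^{\nu}_u$ separately in the abnormal case $\nu=0$ and the normal case $\nu>0$. The whole analysis rests on one structural observation: since $U$ is a cone and $H^{\nu}_u$ is positively homogeneous of degree one in $u$, its supremum over $U$ is either $+\infty$ or $0$, and condition~(3) of Theorem~\ref{th-pmp} forces this supremum to equal $0$ and to be attained along the ray spanned by the optimal control. Writing $\rho=\sqrt{I_1u_1^2-I_2u_2^2-I_3u_3^2}$ for the Lorentzian speed, I would keep in mind throughout that $U=\{Q(u)\leqslant 0,\ u_1>0\}$ is the future non-spacelike cone.

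For the normal case I would rescale the pair $(\lambda,\nu)$ so that $\nu=1$. The linear part of $H^1_u$ is smooth, while $\rho$ is concave on $\interior U$ and its gradient blows up as $u\to\partial U$ (where $\rho\to 0$); hence moving from the light cone into the interior strictly increases $H^1_u$, so every maximizer lies in $\interior U$ and the corresponding velocity is time-like. This already yields item~(2). Solving the stationarity equations $\partial H^1_u/\partial u_i=0$ gives
$$
u_1=-\frac{\rho\,h_1}{I_1},\qquad u_2=\frac{\rho\,h_2}{I_2},\qquad u_3=\frac{\rho\,h_3}{I_3},
$$
so that $u_1>0$ forces $h_1<0$, and substituting back into the definition of $\rho$ forces the self-consistency relation $\frac{h_1^2}{I_1}-\frac{h_2^2}{I_2}-\frac{h_3^2}{I_3}=1$, i.e. $H=-\tfrac12$; a direct substitution then shows the maximal value is $0$, as required by condition~(3). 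Conversely, when $H\neq-\tfrac12$ no interior critical point exists, and the boundary blow-up of the gradient of $\rho$ rules out a boundary maximizer, so the supremum is either unattained or $+\infty$ and carries no normal geodesic. This establishes the time-like half of item~(4).

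For item~(3) I would read off from the formulas above that the optimal control is $\hat u=\rho\,d_hH$, where $d_hH=\bigl(-\tfrac{h_1}{I_1},\tfrac{h_2}{I_2},\tfrac{h_3}{I_3}\bigr)$ and $H=-\tfrac12\bigl(\tfrac{h_1^2}{I_1}-\tfrac{h_2^2}{I_2}-\tfrac{h_3^2}{I_3}\bigr)$. Because the length functional is invariant under reparametrization, the positive factor $\rho$ may be normalized freely; choosing $\rho\equiv1$ turns the horizontal equation into $\dot g=L_{g*}d_hH$, while left-invariance of the $h_i$ together with the Lie--Poisson (Euler) reduction gives the vertical equations $\dot h_i=\{H,h_i\}$. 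Different choices of $\rho$ only reparametrize a curve without changing its image, so geodesics geometrically coincide with projections of the trajectories of this quadratic Hamiltonian system.

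Finally, in the abnormal case $\nu=0$ the Hamiltonian $H^0_u=u_1h_1+u_2h_2+u_3h_3$ is linear in $u$, so condition~(3) forces it to be nonpositive on the cone $U$ and to vanish along the optimal ray; equivalently, $\{u_1h_1+u_2h_2+u_3h_3=0\}$ is a supporting hyperplane of $U$ touching it along a null ray. Hence the control is light-like ($u\in\partial U$) and $h$ is null with $h_1<0$, i.e. $H=0$, which gives item~(1) and the light-like half of item~(4). Strictness follows from items~(1)--(2): a normal lift of such a geodesic would have to be time-like, contradicting the light-like velocity just found, so these geodesics are strict abnormal. I expect the main obstacle to be the second paragraph --- correctly combining the degree-one homogeneity with the boundary blow-up of the gradient of $\rho$ to pin the momentum to the level $H=-\tfrac12$ and to exclude finite maximizers off this level.
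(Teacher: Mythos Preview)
Your direct analysis via the Pontryagin maximum principle is essentially correct and is considerably more explicit than the paper's own proof, which simply cites the external reference~\cite{podobryaev-extr} (Corollaries~1--2, Theorem~1, Remark~8 there). Your homogeneity observation, the interior-maximizer argument exploiting the blow-up of $\nabla\rho$ on $\partial U$, the self-consistency computation pinning $h$ to the level $\{H=-\tfrac12,\ h_1<0\}$, and the dual-cone analysis in the abnormal case are all sound and constitute a clean self-contained proof of items~(1), (2) and (4).

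There is one small gap in your treatment of item~(3). Your derivation of the Hamiltonian system goes through only in the normal case, where you obtain $\hat u=\rho\,d_hH$ with $\rho>0$ and then normalize $\rho\equiv1$. But item~(3) is stated for \emph{all} geodesics, including the light-like (abnormal) ones, and there $\rho=0$, so your argument as written does not apply. The fix is short: when $H(h)=0$ and $h_1<0$, one checks that $Q(d_hH)=-\frac{h_1^2}{I_1}+\frac{h_2^2}{I_2}+\frac{h_3^2}{I_3}=2H(h)=0$ and $(d_hH)_1=-h_1/I_1>0$, so $d_hH$ itself lies on the future null cone $\partial U$. Moreover, the abnormal maximizer you found (the null ray along which the supporting hyperplane $\{h\cdot u=0\}$ touches $U$) is precisely the ray through $d_hH$; hence the abnormal control is again proportional to $d_hH$, and after reparametrization the light-like geodesics are also trajectories of the same Hamiltonian system $\dot h_i=\{H,h_i\}$, $\dot g=L_{g*}d_hH$. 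With this sentence added, your argument covers all cases and your self-contained proof is complete; compared to the paper's citation it has the advantage of making the structure (duality of cones, homogeneity, envelope argument) fully visible.
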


\begin{proof}
This follows from the results of paper~\cite{podobryaev-extr}.
For~(1) see Corollary~2, item~(2) follows from Theorem~1, and items (3) and (4) from Corollary~1 and Remark~8.
\end{proof}

The subsystem for $h_i$ of the Hamiltonian system (see Proposition~\ref{prop-energy}~(3)) is called \emph{the vertical part of the Hamiltonian system}.

\begin{proposition}
\label{prop-vert-hamiltonian-system}
The vertical part of the Hamiltonian system reads as
\begin{equation*}
\label{eq-vertical-subsystem}
\dot{h}_1 = \frac{I_2 - I_3}{I_2 I_3} h_2 h_3, \qquad
\dot{h}_2 = \frac{I_1 - I_3}{I_1 I_3} h_1 h_3, \qquad
\dot{h}_3 = \frac{I_2 - I_1}{I_1 I_2} h_1 h_2.
\end{equation*}
\end{proposition}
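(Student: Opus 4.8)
The plan is to compute $\dot h_i = \{H, h_i\}$ directly from the Lie--Poisson structure on $\g^*$. The key observation is that for the standard Poisson structure on $\g^*$ the bracket of two linear functions $h_i(\lambda) = \langle \lambda, e_i \rangle$ is again the linear function associated with their commutator, i.e. $\{h_i, h_j\} = h_{[e_i, e_j]}$. First I would record the brackets of the coordinate functions read off from the commutator relations~\eqref{eq-commutators}: namely $\{h_1, h_2\} = h_3$, $\{h_1, h_3\} = -h_2$, and $\{h_2, h_3\} = -h_1$, together with antisymmetry and $\{h_i, h_i\} = 0$. This reduces the whole statement to a bookkeeping computation.

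Next I would expand $\{H, h_i\}$ by the Leibniz rule applied to the Hamiltonian $H = -\frac{1}{2}\bigl(\frac{h_1^2}{I_1} - \frac{h_2^2}{I_2} - \frac{h_3^2}{I_3}\bigr)$ from Proposition~\ref{prop-energy}~(3), obtaining $\{H, h_i\} = -\frac{h_1}{I_1}\{h_1, h_i\} + \frac{h_2}{I_2}\{h_2, h_i\} + \frac{h_3}{I_3}\{h_3, h_i\}$. Substituting the structure brackets above and collecting terms yields, for instance, $\dot h_1 = -\frac{h_2 h_3}{I_2} + \frac{h_2 h_3}{I_3} = \frac{I_2 - I_3}{I_2 I_3} h_2 h_3$, and the analogous reductions give $\dot h_2 = \frac{I_1 - I_3}{I_1 I_3} h_1 h_3$ and $\dot h_3 = \frac{I_2 - I_1}{I_1 I_2} h_1 h_2$. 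This is exactly the asserted system.

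Since the argument is a direct substitution, there is no serious obstacle; the only point requiring care is the sign convention for the Lie--Poisson bracket, which must be the one implicit in the Hamiltonian system of Proposition~\ref{prop-energy}~(3). I would pin it down through the identity $\{h_i, h_j\} = h_{[e_i, e_j]}$ and confirm consistency on the equation for $\dot h_1$, after which the remaining two equations follow by the same manipulation using the relations~\eqref{eq-commutators}.
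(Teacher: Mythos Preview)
Your proposal is correct and follows precisely the approach of the paper: the paper's proof is the single sentence ``It is a direct computation of Poisson brackets from Proposition~\ref{prop-energy}~(3) using $\{h_i, h_j\} = \langle [e_i, e_j], \,\cdot\, \rangle$ and the commutator relations~\eqref{eq-commutators}'', and you have simply written out that computation in full.
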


\begin{proof}
It is a direct computation of Poisson brackets from Proposition~\ref{prop-energy}~(3) using $\{h_i, h_j\} = \langle [e_i, e_j], \,\cdot\, \rangle$, $i,j=1,2,3$ and the commutator relations~\eqref{eq-commutators}.
\end{proof}

Consider now the case $I_2 = I_3$ more precisely. Recall that the parameter $\eta$ measures the oblateness of the set of vectors with unit norm
$$
\eta = \frac{I_2}{I_1} - 1 > -1.
$$
We will refer to the cases $\eta < 0$, $\eta = 0$ and $\eta > 0$ as \emph{the oblate case}, \emph{the symmetric case} and \emph{the prolate case}, respectively.

It follows from Proposition~\ref{prop-vert-hamiltonian-system} that for $I_2 = I_3$ we get the following vertical subsystem:
$$
\begin{array}{l}
\dot{h}_1 = 0, \\
\dot{h}_2 = -\frac{\eta}{I_2} h_1 h_3, \\
\dot{h}_3 = \frac{\eta}{I_2} h_1 h_2. \\
\end{array}
$$
Introduce the following notation
$$
|h| = \sqrt{|\Kil(h)|}, \qquad \bar{h}_i = \frac{h_i}{|h|}, \ i=1,2,3, \qquad
\tau = \frac{t|h|}{2I_2}.
$$
Using this notation we can write the solutions of the vertical subsystem:
$$
\begin{array}{ccl}
\bar{h}_1 & = & \const, \\
\left(
\begin{array}{l}
\bar{h}_2(\tau) \\
\bar{h}_3(\tau) \\
\end{array}
\right)
& = & R_{2\tau\eta\bar{h}_1}
\left(
\begin{array}{l}
\bar{h}_2(0) \\
\bar{h}_3(0) \\
\end{array}
\right), \\
\end{array}
$$
where we denote by $R_{\alpha}$ a rotation around the point $(0,0)$ in the plane $(h_2,h_3)$ by an angle $\alpha$.

\begin{proposition}
\label{prop-extremal-trajectories}
A geodesic with an initial covector $h \in \g^*$ has the following parametrization\emph{:}
$$
g(t) = \exp{\frac{t}{I_2}\bigl(-h_1e_1 + h_2e_2 + h_3e_3\bigr)} \cdot \exp{\left(-t\frac{\eta h_1}{I_2} e_1\right)}.
$$
\end{proposition}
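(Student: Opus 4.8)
The plan is to verify that the proposed product of one-parameter subgroups solves the horizontal part of the Hamiltonian system of Proposition~\ref{prop-energy}(3), namely $\dot{g}(t) = L_{g(t)*}\, d_hH$, with the correct vertical momentum $h(t)$ and the initial condition $g(0) = \id$; uniqueness of solutions of ODEs then identifies the formula with the geodesic. First I would write down the horizontal drift: in the axisymmetric case $I_2 = I_3$ the differential of $H = -\frac{1}{2}\bigl(\frac{h_1^2}{I_1} - \frac{h_2^2}{I_2} - \frac{h_3^2}{I_2}\bigr)$, regarded as an element of $\g$, is
$$d_{h(t)}H = -\frac{h_1}{I_1}e_1 + \frac{h_2(t)}{I_2}e_2 + \frac{h_3(t)}{I_2}e_3,$$
where $h_1$ is constant and $(h_2(t),h_3(t))$ is the rotation of $(h_2,h_3)$ by the angle $s := 2\tau\eta\bar{h}_1 = \frac{t\eta h_1}{I_2}$ coming from the vertical solution recorded above. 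The conceptual point is the splitting $d_{h(0)}H = A + B$ with $A = \frac{1}{I_2}(-h_1 e_1 + h_2 e_2 + h_3 e_3)$ and $B = -\frac{\eta h_1}{I_2}e_1$, which is legitimate because of the identity $\frac{1}{I_1} = \frac{1+\eta}{I_2}$; note that $B$ points along the rotation generator $e_1$.

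Next I would differentiate $g(t) = \exp(tA)\exp(tB)$ and compute its left-logarithmic derivative. Since each factor commutes with its own generator, a short calculation gives
$$L_{g(t)*}^{-1}\dot{g}(t) = \Ad_{\exp(-tB)}A + B.$$
From the commutator relations~\eqref{eq-commutators} the operator $\ad_{e_1}$ annihilates $e_1$ and acts as a rotation generator in the $(e_2,e_3)$-plane, so $\Ad_{\exp(-tB)} = \exp\bigl(t\tfrac{\eta h_1}{I_2}\ad_{e_1}\bigr)$ fixes $e_1$ and rotates the $(e_2,e_3)$-components by exactly the angle $s = \frac{t\eta h_1}{I_2}$. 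Applying this to $A$ and adding $B$, the $e_1$-coefficient recombines as $-\frac{h_1}{I_2} - \frac{\eta h_1}{I_2} = -\frac{h_1}{I_1}$, while the $(e_2,e_3)$-components become $\frac{1}{I_2}R_s(h_2,h_3)^{\top}$.

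Finally I would match this against $d_{h(t)}H$: since the vertical subsystem rotates $(h_2(t),h_3(t))$ by the same angle $s$ (same orientation), the components $\frac{1}{I_2}R_s(h_2,h_3)^{\top}$ coincide with $\frac{h_2(t)}{I_2}$ and $\frac{h_3(t)}{I_2}$, so $L_{g(t)*}^{-1}\dot{g}(t) = d_{h(t)}H$. Combined with $g(0) = \id$, this shows the formula satisfies the Hamiltonian system, and uniqueness completes the proof. I expect the main obstacle to be bookkeeping rather than conceptual: one must confirm that the rotation produced by $\Ad_{\exp(-tB)}$ exactly reproduces the momentum rotation of the vertical subsystem (same angle, same sign) and that the two $e_1$-contributions fuse into $-h_1/I_1$. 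The whole argument rests on $e_1$ being the generator of the $\U_1$-symmetry, which is precisely what permits decomposing the drift into a genuine one-parameter subgroup $\exp(tA)$ and a pure rotation $\exp(tB)$.
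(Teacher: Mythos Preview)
Your proposal is correct and follows essentially the same approach as the paper: both differentiate the product of one-parameter subgroups, compute the left-logarithmic derivative as $\Ad_{\exp(-tB)}A + B$ (the paper writes this as $(\Ad{\exp{(tZ)}})A - Z$ with $Z = -B$), and then check that the $e_1$-coefficients recombine to $-h_1/I_1$ while the $(e_2,e_3)$-part reproduces the rotated momentum from the vertical subsystem. Your write-up is slightly more explicit about the splitting $d_{h(0)}H = A + B$ and the sign-matching of the rotation angles, but the argument is the same verification.
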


\begin{proof}
Let us check that $g(t)$ is a solution of the horizontal part of the Hamiltonian system by direct computation.
As usual we denote by $L_g$ and $R_g$ left- and right-shifts by an element $g \in G$, respectively.
To simplify these computations we use the notation $Z = \frac{\eta h_1}{I_2} e_1$.
$$
\dot{g}(t) = L_{\exp{\frac{t}{I_2}\left(-h_1e_1 + h_2e_2 + h_3e_3\right)} *} R_{\exp{(-tZ)} *} \frac{1}{I_2}\bigl(-h_1e_1 + h_2e_2 + h_3e_3\bigr) +
L_{\exp{\frac{t}{I_2}\left(-h_1e_1 + h_2e_2 + h_3e_3\right)} *} L_{\exp{(-tZ)} *} (-Z) =
$$
$$
= L_{g(t) *} \Bigl( \bigl(\Ad{\exp{(tZ)}}\bigr) \frac{1}{I_2}\bigl(-h_1e_1 + h_2e_2 + h_3e_3\bigr) - Z \Bigr).
$$
Substituting $Z = \frac{\eta h_1}{I_2} e_1$ we obtain
$$
\Bigl(\Ad{\exp{(tZ)}}\Bigr) \frac{1}{I_2}\Bigl(-h_1e_1 + h_2e_2 + h_3e_3\Bigr) - Z  = \frac{1}{I_2}\Bigl(-h_1(t)e_1 + h_2(t)e_2 + h_3(t)e_3\Bigr) - \frac{1}{I_2}\Bigl(\frac{I_2}{I_1}-1\Bigr)h_1e_1 =
$$
$$
= -\frac{h_1(t)}{I_1}e_1 + \frac{h_2(t)}{I_2}e_2 + \frac{h_3(t)}{I_3}e_3 = u_1e_1 + u_2e_2 + u_3e_3.
$$
So, we get $\dot{g}(t) = L_{g(t) *} (u_1e_1+u_2e_2+u_3e_3)$.
\end{proof}

\begin{remark}
\label{rem-isometry-group}
The parametrization of geodesics from Proposition~\ref{prop-extremal-trajectories} as a product of two one-parametric subgroups is not occasional.
This Proposition~\ref{prop-extremal-trajectories} follows from some general facts.
Let us mention these general considerations in addition to the explicit proof of this Proposition.
Note that the group $\R$ acts on $\g$ by rotations in the plane $\sspan{\{e_2, e_3\}}$ with the fixed vector $e_1$. Moreover, these transformations are automorphisms of the Lie algebra $\g$. It follows that the dual action on $\g^*$ induces the action of the same group on the group $G$ by isometries, see~\cite[Th.~1]{podobryaev-symmetries}.
This means that the isometry group of the Lorentzian structure on the group $G$ contains the group $G \times \R$.
The corresponding isotropy subgroup is
$$
K = \left\{ \left((a, 0), -a\right) \, | \, a \in \R\right\}.
$$
Since the vertical part of the Hamiltonian vector field is tangent to the orbits of the isotropy group $K$,
then the geodesic corresponding to the initial covector $h \in \g^*$ is an orbit of one-parametric subgroup of isometries
$$
g(t) = \exp{t\left(d_hH + Z\right)},
$$
where $Z$ is an element of the Lie algebra of the isotropy subgroup such that $\dot{h}(0) = -(\ad^* Z)h(0)$, see~\cite[Lemma~3.4]{podobryaev-hg}.
Note that in that paper this fact is proved for sub-Riemannian structure but the proof for arbitrary Hamiltonian system is quite the same.

We have the following elements of the Lie algebra $\g \oplus \R$:
$$
d_hH = \left( -\frac{h_1}{I_1}e_1 + \frac{h_2}{I_2}e_2 + \frac{h_3}{I_2}e_3, 0 \right), \qquad
Z = \left( \frac{\eta h_1}{I_2} e_1, -\frac{\eta h_1}{I_2} e_1 \right).
$$
Finally, computing the exponent for the sum of commuting elements of the Lie algebra, we get the product of exponents as in Proposition~\ref{prop-extremal-trajectories}.
\end{remark}

Now let us deduce geodesic equations in coordinates.
These equations can be three different types depending on an initial covector $h$ type with respect to the Killing form $\Kil{h} = -h_1^2 + h_2^2 + h_3^2$.

\begin{proposition}
\label{prop-circle-geodesics-in-coordinates}
A normal geodesic with an initial covector $h \in \g^*$ has the following parametrization\emph{:}
$$
c(\tau) = \arg{\{(q_0(\tau) + iq_1(\tau)\}}, \qquad w(\tau) = q_2(\tau) + iq_3(\tau),
$$
where $q_0, q_1, q_2, q_3$ are defined as follows.
We assume that the number $c(\tau) \in [0, +\infty)$ counts the turns of the curve $q_0(t) + iq_1(t) \in \C$ around zero.\\
\emph{(1)} If $\Kil{h} < 0$, then
\begin{equation}
\label{eq-circle-time-geodesic}
\begin{array}{ccl}
q_0(\tau) & = & \ct \ce - \hone \st \se,\\
q_1(\tau) & = & -\ct \se - \hone \st \ce,\\
\left(
  \begin{array}{l}
     q_2(\tau)\\
     q_3(\tau)\\
  \end{array}
\right) & = &  \st R_{\tau \eta \hone}
\left(
  \begin{array}{l}
     \bar{h}_2\\
     \bar{h}_3\\
  \end{array}
\right).\\
\end{array}
\end{equation}
\emph{(2)} If $\Kil{h} = 0$, then
\begin{equation}
\label{eq-circle-light-geodesic}
\begin{array}{ccl}
q_0(t) & = & \ctl - \frac{t}{2 I_2} h_1 \stl,\\
q_1(t) & = & -\stl - \frac{t}{2 I_2} h_1 \ctl,\\
\left(
  \begin{array}{l}
     q_2(t)\\
     q_3(t)\\
  \end{array}
\right) & = & \frac{t}{2 I_2} R_{\argl}
\left(
  \begin{array}{l}
     h_2\\
     h_3\\
  \end{array}
\right).\\
\end{array}
\end{equation}
\emph{(3)} If $\Kil{h} > 0$, then
\begin{equation}
\label{eq-circle-space-geodesic}
\begin{array}{ccl}
q_0(\tau) & = & \cht \ce - \hone \sht \se,\\
q_1(\tau) & = & -\cht \se - \hone \sht \ce,\\
\left(
  \begin{array}{l}
     q_2(\tau )\\
     q_3(\tau )\\
  \end{array}
\right) & = & \sht R_{\tau \eta \hone}
\left(
  \begin{array}{l}
     \bar{h}_2\\
     \bar{h}_3\\
  \end{array}
\right).\\
\end{array}
\end{equation}
\end{proposition}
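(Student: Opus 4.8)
The plan is to compute the matrix $g(t)$ of Proposition~\ref{prop-extremal-trajectories} explicitly and let it act on the base point, reading off the coordinates $(c,w)$ afterwards. Realize $\g$ inside $\mathfrak{su}_{1,1}$ by the basis $e_1=\tfrac12\left(\begin{smallmatrix} i & 0 \\ 0 & -i\end{smallmatrix}\right)$, $e_2=\tfrac12\left(\begin{smallmatrix} 0 & 1 \\ 1 & 0\end{smallmatrix}\right)$, $e_3=\tfrac12\left(\begin{smallmatrix} 0 & i \\ -i & 0\end{smallmatrix}\right)$, which satisfies the commutator relations~\eqref{eq-commutators} and carries the Killing form to $\diag{(-2,2,2)}$, exactly the normalization of Proposition~\ref{prop-canonical-form}. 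The group $\SU_{1,1}$ acts simply transitively on $\HH^{1,1}=\{(z,w)\in\C^2 \mid |z|^2-|w|^2=1\}$: the base point $o=(1,0)$ (corresponding to $\tilde{o}$) has trivial stabilizer, and $g\cdot o$ is simply the first column of the matrix $g$. Thus the whole computation reduces to extracting the first column of $g(t)$ and setting $z=q_0+iq_1$, $w=q_2+iq_3$.

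Write $g(t)=\exp(\tfrac{t}{I_2}B)\cdot\exp(-\tfrac{t\eta h_1}{I_2}e_1)$ with $B=-h_1e_1+h_2e_2+h_3e_3$. The matrix $B$ is trace-free with $\det B=-\tfrac14\Kil(h)$, so by Cayley--Hamilton $B^2=\tfrac14\Kil(h)\,\mathrm{Id}$. This is the crux: the sign of $\Kil(h)$ dictates the form of $\exp(\tfrac{t}{I_2}B)$. With $\tau=\tfrac{t|h|}{2I_2}$ one obtains $\exp(\tfrac{t}{I_2}B)=\ct\,\mathrm{Id}+\tfrac{2}{|h|}\st\,B$ when $\Kil(h)<0$ (elliptic), $\exp(\tfrac{t}{I_2}B)=\mathrm{Id}+\tfrac{t}{I_2}B$ when $\Kil(h)=0$ (nilpotent, which produces the polynomial dependence of case~(2), the regime where $|h|=0$ makes $\tau$ unusable), and the same expression with $\cht,\sht$ when $\Kil(h)>0$ (hyperbolic). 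This trichotomy is precisely the three cases of the statement.

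The second factor is the diagonal rotation $\exp(-\tfrac{t\eta h_1}{I_2}e_1)=\diag{(e^{-i\psi},e^{i\psi})}$ with $\psi=\tfrac{t\eta h_1}{2I_2}=\tau\eta\hone$, which encodes the Berger deformation. Multiplying and taking the first column: after expanding $e^{-i\psi}$ into $\ce,\se$, the $(1,1)$-entry splits into real and imaginary parts equal to $q_0$ and $q_1$, while the $(2,1)$-entry equals $\st\,(\bar h_2\pm i\bar h_3)e^{\mp i\psi}$, i.e.\ $\st$ times the planar rotation $R_{\tau\eta\hone}$ applied to $(\bar h_2,\bar h_3)$, giving $q_2+iq_3$. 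The parabolic and hyperbolic cases come out line by line with $\ct,\st$ replaced by $1,\tfrac{t}{2I_2}h_1$ and by $\cht,\sht$. A useful check is $q_0^2+q_1^2=\cos^2\tau+\hone^2\sin^2\tau=1+\sin^2\tau(\bar h_2^2+\bar h_3^2)=1+q_2^2+q_3^2$ (using $\hone^2-\bar h_2^2-\bar h_3^2=1$ when $\Kil(h)<0$), so $(z,w)$ indeed lies on $\HH^{1,1}$ with $|z|=\sqrt{1+|w|^2}$, consistent with $\Pi$.

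The main obstacles are bookkeeping rather than conceptual. First, one must fix the isomorphism $\g\simeq\mathfrak{su}_{1,1}$ and the orientation of the $(h_2,h_3)$-plane so that all signs, and the sense of the rotation $R$, agree with the coordinate map $\Pi$; a different but equally valid choice merely conjugates $w$ (this is the source of the $\pm/\mp$ above). Second, the assertion that $c(\tau)$ \emph{counts the turns} requires passing from the matrix group $\SU_{1,1}$ to its universal cover: the planar curve $q_0(\tau)+iq_1(\tau)$ may wind several times around the origin, and $c$ is the continuous lift of its argument normalized by $c(0)=0$, which is exactly the $\widetilde{\HH}^{1,1}$-coordinate. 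Finally, the degenerate case $\Kil(h)=0$ must be treated directly through the nilpotent exponential (equivalently as the limit $|h|\to0$, $\tau\to0$ with $\tfrac{t|h|}{2I_2}$ rescaled), which is why item~(2) is phrased in $t$ rather than in $\tau$.
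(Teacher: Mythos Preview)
Your proposal is correct and follows essentially the same approach as the paper: compute the two matrix exponentials from Proposition~\ref{prop-extremal-trajectories} in $\SU_{1,1}$, multiply them, and read off $(q_0+iq_1,\,q_2+iq_3)$ from the resulting matrix. The paper simply quotes the closed-form exponential formulas~\eqref{eq-group-exp} for the three Killing-form regimes, whereas you derive them via Cayley--Hamilton from $B^2=\tfrac14\Kil(h)\,\mathrm{Id}$; this is the same computation packaged slightly differently, and your added remarks on the sign/orientation convention, the continuous lift of $\arg$ to the universal cover, and the $|z|^2-|w|^2=1$ sanity check are all appropriate.
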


\begin{proof}
The proofs are direct computations of the product of two exponents from Proposition~\ref{prop-extremal-trajectories}, using the
following formulas of the exponential map $\exp : \mathfrak{su}_{1,1} \rightarrow \SU_{1,1}$ for $x \in \mathfrak{su}_{1,1}$
$$
\exp(x) =
\left(
\begin{array}{cc}
q_0+iq_1 & q_2+iq_3\\
q_2-iq_3 & q_0 - iq_1\\
\end{array}
\right) \in \SU_{1,1},
$$
where
\begin{equation}
\label{eq-group-exp}
\begin{array}{llll}
q_0 + i q_1 = \cos{(\frac{|x|}{2})} + i \bar{x}_1 \sin{(\frac{|x|}{2})}, & q_2 + i q_3 = \sin{(\frac{|x|}{2})} (\bar{x}_2  + i \bar{x}_3), & \text{if} & \Kil(x) < 0,\\
q_0 + i q_1 = 1 + i \frac{x_1}{2}, & q_2 + i q_3 = \frac{1}{2}(x_2  + i x_3), & \text{if} & \Kil(x) = 0,\\
q_0 + i q_1 = \cosh{(\frac{|x|}{2})} + i \bar{x}_1 \sinh{(\frac{|x|}{2})}, & q_2 + i q_3 =  \sinh{(\frac{|x|}{2})} (\bar{x}_2 + i \bar{x}_3), & \text{if} & \Kil(x) > 0,\\
\end{array}
\end{equation}
and $x = x_1e_1 + x_2e_2 + x_3e_3 \in \g$.

Let us do it for the case $\Kil(h) < 0$, in the other cases computations are quite the same.
By formula~\eqref{eq-group-exp}, remembering that $\tau = \frac{t|h|}{2I_2}$ and since $\Kil(-h_1e_1 + h_2e_2 +h_3e_3)$ is also negative we have
$$
\begin{array}{rl}
\exp{\frac{t}{I_2}(-h_1e_1 + h_2e_2 +h_3e_3)} =
& \left(
\begin{array}{cc}
\cos{\tau} - i\bar{h}_1\sin{\tau} & \sin{(\tau)}(\bar{h}_2+i\bar{h}_3)\\
\sin{(\tau)}(\bar{h}_2-i\bar{h}_3) & \cos{\tau} - i\bar{h}_1\sin{\tau}\\
\end{array}
\right),\\
\exp{(-\frac{t\eta h_1}{I_2}e_1)} =
& \left(
\begin{array}{cc}
\cos{(\tau\eta\bar{h}_1)} - i\sin{(\tau\eta\bar{h}_1)} & 0\\
0 & \cos{(\tau\eta\bar{h}_1)} + i\sin{(\tau\eta\bar{h}_1)}\\
\end{array}
\right).
\end{array}
$$
It remains to calculate the matrix product in the group $\SU_{1,1}$.
\end{proof}

\section{\label{sec-atset}Attainable sets and existence of the solution}

We begin with few lemmas which are common to three cases (oblate, symmetric and prolate) and then we consider each case separately.

First of all we need the multiplication rule for elements of the group $G$. It is not difficult to deduce it from the definition, see also~\cite[Def.~1]{grong-vasiliev}.
Let $(c_1, w_1) \cdot (c_2, w_2) = (c, w)$, then
\begin{equation}
\label{eq-mult}
\begin{array}{c}
c = c_1 + c_2 + \atan{\frac{\Image{(w_1\bar{w}_2e^{-i(c_1+c_2)})}}{\sqrt{1+|w_1|^2}\sqrt{1+|w_2|^2} + \Real{(w_1\bar{w}_2e^{-i(c_1+c_2)})}}},\\
w = w_2\sqrt{1+|w_1|^2}e^{ic_1} + w_1\sqrt{1+|w_2|^2}e^{-ic_2}.\\
\end{array}
\end{equation}
Let us denote by $\A$ the attainable set from the point $\id$.

\begin{lemma}
\label{lem-admissible-velocities}
Any admissible velocity at a point $(c_0, w_0) \in G$ has the form\emph{:}
$$
\left(
\xi + \frac{\Image{w_0\bar{\omega}e^{-ic_0}}}{\sqrt{1+|w_0|^2}}, \
\omega\sqrt{1+|w_0|^2}e^{ic_0} - iw_0\xi
\right),
$$
where $\xi \in \R$ and $\omega \in \C$ are such that $\xi \geqslant \sqrt{\eta+1}|\omega| > 0$.
\end{lemma}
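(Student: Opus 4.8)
The plan is to use left-invariance: every admissible velocity at a point $(c_0,w_0)$ is the image under the differential $L_{(c_0,w_0)*}$ of an admissible velocity at the identity $\id=(0,0)$, and the admissible velocities at $\id$ are exactly $u_1e_1+u_2e_2+u_3e_3$ with $(u_1,u_2,u_3)\in U$. So I would (i) read off the admissible velocities at the identity in the coordinates $(c,w)$, translating the constraint $u\in U$ into a constraint on $(\dot c(0),\dot w(0))$, and then (ii) push these vectors forward through the left shift by differentiating the multiplication rule \eqref{eq-mult}.

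\textbf{Velocities at the identity.} To express $u_1e_1+u_2e_2+u_3e_3$ in coordinates I differentiate $t\mapsto\exp(t(u_1e_1+u_2e_2+u_3e_3))$ at $t=0$ using \eqref{eq-group-exp}; since near $t=0$ only the first-order term matters (and it is the same in all three sign cases of $\Kil$), and since $c=\arg\{q_0+iq_1\}$, $w=q_2+iq_3$, a short computation gives $(\dot c(0),\dot w(0))=\bigl(\tfrac{u_1}{2},\tfrac{u_2+iu_3}{2}\bigr)$. Writing $\xi=\dot c(0)$ and $\omega=\dot w(0)$, so that $u_1=2\xi$ and $u_2^2+u_3^2=|u_2+iu_3|^2=4|\omega|^2$, the defining inequalities of $U$ (with $I_2=I_3$), namely $I_1u_1^2\geqslant I_2(u_2^2+u_3^2)$ and $u_1>0$, become $\xi^2\geqslant\frac{I_2}{I_1}|\omega|^2=(\eta+1)|\omega|^2$ and $\xi>0$, i.e. $\xi\geqslant\sqrt{\eta+1}\,|\omega|>0$, which is precisely the constraint asserted in the statement.

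\textbf{Left translation.} I would then compute $L_{(c_0,w_0)*}(\xi,\omega)$ by differentiating at $t=0$ the product $(c_0,w_0)\cdot(\gamma(t),\beta(t))$, where $(\gamma,\beta)$ is any curve with $(\gamma(0),\beta(0))=(0,0)$ and $(\dot\gamma(0),\dot\beta(0))=(\xi,\omega)$, applying \eqref{eq-mult} with $(c_1,w_1)=(c_0,w_0)$ fixed and $(c_2,w_2)=(\gamma(t),\beta(t))$. For the $w$-component one differentiates $w=\beta\sqrt{1+|w_0|^2}e^{ic_0}+w_0\sqrt{1+|\beta|^2}e^{-i\gamma}$; the derivative of $\sqrt{1+|\beta|^2}$ vanishes at $t=0$ because $\beta(0)=0$, leaving $\omega\sqrt{1+|w_0|^2}e^{ic_0}-iw_0\xi$. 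For the $c$-component one differentiates $\gamma(t)$ plus the $\atan$ term: with $N$ the numerator and $D$ the denominator of its argument, $\beta(0)=0$ forces $N(0)=0$ and kills the $\Real$ correction in $D(0)=\sqrt{1+|w_0|^2}$, so $\frac{d}{dt}\atan(N/D)\big|_0=\dot N(0)/D(0)$ with $\dot N(0)=\Image(w_0\bar\omega e^{-ic_0})$, giving $\xi+\Image(w_0\bar\omega e^{-ic_0})/\sqrt{1+|w_0|^2}$. These are exactly the two components in the lemma.

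\textbf{Main obstacle.} The computation is elementary once organized this way; the only place demanding care is the differentiation of the $\atan$ term in \eqref{eq-mult}. The decisive simplification — and the reason the resulting expressions are so clean — is that the curve $(\gamma,\beta)$ is based at the identity, so $\beta(0)=0$ annihilates $N(0)$ and the real correction of $D$, reducing the $\atan$ derivative to $\dot N(0)/D(0)$. I would also note that the identifications $\xi=\dot c(0)$ and $\omega=\dot w(0)$ exhibit these parameters as the body-frame components of the velocity, which is why the cone constraint inherited from $U$ takes the stated form.
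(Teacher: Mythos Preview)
Your proof is correct and follows essentially the same route as the paper: parametrize admissible velocities at the identity by $(\xi,\omega)$ with the cone constraint read off from $U$, then push forward through $L_{(c_0,w_0)*}$ by differentiating the multiplication rule~\eqref{eq-mult} along a curve based at the identity. The paper's proof is terser (it takes the identification $(\xi,\omega)\in\g$ for granted and just says ``direct computation using the multiplication law''), whereas you spell out the differentiation of the $\atan$ term and the vanishing of the $\sqrt{1+|\beta|^2}$ derivative, but the argument is the same.
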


\begin{proof}
Let $(\xi, \omega) \in \g$ be an admissible velocity at the identity point.
Since $I_1|\xi|^2 - I_2 |\omega|^2 \geqslant 0$ and $(\xi, \omega) \neq (0, 0)$ we obtain $\xi \geqslant \sqrt{\eta+1}|\omega| > 0$.

Consider a curve $(c_1(t), w_1(t)) \in G$, $t \in [0, 1]$ such that
$$
c_1(0) = 0, \qquad \dot{c}_1(0) = \xi, \qquad w_1(0) = 0, \qquad \dot{w}_1(0) = \omega.
$$
It remains to compute the velocity of the left-shift of this curve by an element $(c_0, w_0)$, i.e.,
$$
\frac{d}{dt}\Bigm|_{t=0} (c_0, w_0) \cdot (c_1(t), w_1(t)).
$$
This is a direct computation using the multiplication law~\eqref{eq-mult}.
\end{proof}

\begin{lemma}
\label{lem-round}
If $(c_0,w_0) \in \A$, then $\{(c_0,w) \in G \, | \, |w| = |w_0|\} \subset \A$.
\end{lemma}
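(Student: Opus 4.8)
The statement is Lemma~\ref{lem-round}: if $(c_0, w_0) \in \A$, then the entire circle $\{(c_0, w) \mid |w| = |w_0|\}$ lies in $\A$. The natural approach is to exploit the rotational symmetry of the problem. The group $G = \widetilde{\SU}_{1,1}$ carries the $\U_1$-symmetry coming from rotations in the $(h_2, h_3)$-plane fixing $e_1$ (this is exactly the $\R$-action of Remark~\ref{rem-isometry-group}, which acts by isometries and hence preserves the attainable set). First I would show that this symmetry acts on the coordinates $(c, w)$ by fixing $c$ and rotating $w$ by a phase $e^{i\alpha}$, i.e. it sends $(c_0, w_0)$ to $(c_0, e^{i\alpha}w_0)$. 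Since $|e^{i\alpha}w_0| = |w_0|$ and $\alpha$ ranges over all of $\R$, the orbit of $(c_0, w_0)$ under this action is precisely the circle $\{(c_0, w) \mid |w| = |w_0|\}$.

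The key computation is to verify that the isometric $\R$-action on $G$, when expressed in the coordinates $(c, w)$ of the projection map $\Pi$, really is $(c, w) \mapsto (c, e^{i\alpha}w)$ with $c$ unchanged. I would check this either by tracking how the rotation $R_\alpha$ in the $(e_2, e_3)$-plane lifts to the group via the matrix realization of $\SU_{1,1}$ used in Proposition~\ref{prop-circle-geodesics-in-coordinates}, or more directly by observing from the geodesic equations~\eqref{eq-circle-time-geodesic}--\eqref{eq-circle-space-geodesic} that replacing the initial covector data $(\bar{h}_2, \bar{h}_3)$ by $R_\alpha(\bar{h}_2, \bar{h}_3)$ leaves $c(\tau) = \arg\{q_0 + iq_1\}$ untouched (since $q_0, q_1$ depend only on $\bar{h}_1$, $\tau$, and $\eta$) and rotates $w(\tau) = q_2 + iq_3$ by the phase $\alpha$. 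This confirms that the symmetry fixes the $c$-coordinate and acts as a phase rotation on $w$.

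The remaining point is that an isometry of the Lorentzian structure that fixes the starting point $\id = \tilde{o}$ must preserve the attainable set $\A$: isometries send admissible future-directed curves to admissible future-directed curves of the same length, so if a point is reachable, so is its image. The isotropy subgroup $K$ of Remark~\ref{rem-isometry-group} fixes $\tilde{o}$, and its orbit through $(c_0, w_0)$ is the whole circle in question. Therefore $\A$, being invariant under $K$, contains the entire circle once it contains a single point of it. The main obstacle is purely the bookkeeping in the first computation, namely confirming that the abstract $\R$-action on $G$ coincides with the concrete phase rotation $w \mapsto e^{i\alpha}w$ in the $\Pi$-coordinates; everything else is immediate from invariance of $\A$ under isometries fixing the origin.
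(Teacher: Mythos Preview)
Your proposal is correct and follows essentially the same approach as the paper: both exploit the rotational symmetry generated by $\ad(\varphi e_1)$, observe that the induced group automorphism fixes $\id$ and acts on coordinates as $(c,w)\mapsto(c,e^{i\varphi}w)$, and conclude that admissible curves are sent to admissible curves, so $\A$ is invariant under this circle action. The paper phrases it directly in terms of the Lie algebra automorphism $r_\varphi=\ad(\varphi e_1)$ and the corresponding group automorphism $R_\varphi$, verifying $R_\varphi\dot{\gamma}=L_{R_\varphi\gamma\,*}(r_\varphi u)$ in one line; your detour through the isometry language of Remark~\ref{rem-isometry-group} and the geodesic formulas is a slightly longer route to the same computation.
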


\begin{proof}
Assume that the point $(c_0,w_0)$ is attainable by a curve $\gamma : [0,t_1] \rightarrow G$ with the help of a control $u : [0,t_1] \rightarrow U$.
This means that $\dot{\gamma}(t) = L_{\gamma(t) *} u(t)$.
Consider an automorphism $r_{\varphi} = \ad{(\varphi e_1)}$ of the Lie algebra $\g$ for an arbitrary $\varphi$.
The corresponding automorphism $R_{\varphi}$ of the Lie group $G$ is the rotation by the angle $\varphi$ with respect to the $c$-axis.
Since $R_{\varphi}$ is an automorphism we obtain
$R_{\varphi} \dot{\gamma}(t) = L_{R_{\varphi} \gamma(t) *} r_{\varphi} u(t)$.
It follows, that the curve $R_{\varphi}\gamma(\cdot)$ is an admissible curve with the control $r_{\varphi} u(\cdot)$
connecting the points $\id$ and $(c,R_{\varphi}w_0)$.
\end{proof}

\begin{lemma}
\label{lem-att-up}
If $(c_0,w_0) \in \A$, then $\{(c,w_0) \in G \, | \, c \geqslant c_0\} \subset \A$.
\end{lemma}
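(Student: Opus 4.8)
The plan is to combine the rotational symmetry of Lemma~\ref{lem-round} with one explicit admissible curve that raises $c$ at constant $|w|$. First I would reduce the claim as follows: it suffices to produce, starting from $(c_0,w_0)$, an admissible curve $t\mapsto(c(t),w(t))$ along which $|w(t)|\equiv|w_0|$ while $c(t)$ increases strictly and without bound. Indeed, for any prescribed value $c\geqslant c_0$ we may then stop at the time $t$ with $c(t)=c$, reaching a point $(c,w(t))$ with $|w(t)|=|w_0|$; Lemma~\ref{lem-round} immediately gives the entire circle $\{(c,w)\in G \, | \, |w|=|w_0|\}\subset\A$, and in particular $(c,w_0)\in\A$.

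To construct the curve I would use the admissible velocities of Lemma~\ref{lem-admissible-velocities}. If $w_0=0$ the curve $t\mapsto(c_0+t,0)$ works at once, since it corresponds to the admissible control $\xi>0$, $\omega=0$. Assume $w_0\neq0$ and set $\rho=|w_0|$. A velocity from Lemma~\ref{lem-admissible-velocities} at a point $(c,w)$ preserves $|w|$ precisely when $\Real(\bar{w}\,\omega e^{ic})=0$, i.e.\ when $\omega=i\beta e^{-ic}w/|w|$ for some real $\beta$, so that $|\omega|=|\beta|$. A short computation then gives $\dot c=\xi-\beta|w|/\sqrt{1+|w|^2}$. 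Choosing $\beta<0$ and taking the (light-like, hence admissible) value $\xi=\sqrt{\eta+1}\,|\beta|$ yields $\dot c=|\beta|\bigl(\sqrt{\eta+1}+|w|/\sqrt{1+|w|^2}\bigr)>0$. Keeping $|\beta|$ constant along the curve makes $\dot c$ a positive constant, so $c(t)\to+\infty$; the corresponding control $\omega(t)=i\beta e^{-ic(t)}w(t)/\rho$ is continuous and the state stays on the cylinder $\{|w|=\rho\}$, so the integral curve exists for all $t\geqslant0$.

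The step I expect to be the main obstacle is precisely the need to let $w$ rotate rather than freezing it. The naive attempt to keep $w\equiv w_0$ (moving straight up in the coordinate $c$) is admissible only when $\eta|w_0|^2\leqslant1$: for $w_0\neq0$ this holds automatically in the oblate and symmetric cases, but in the prolate case it fails once $|w_0|^2>1/\eta$, where the purely vertical velocity becomes spacelike. The resolution is to spend the excess budget on an admissible rotation of $w$ at fixed modulus, choosing the favourable direction of rotation (the sign of $\beta$) so that the vertical component $\dot c$ stays positive, and then to remove the accumulated phase at the very end by the symmetry of Lemma~\ref{lem-round}. This makes the argument uniform across all three cases.
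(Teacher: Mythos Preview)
Your proof is correct and shares the paper's high-level structure: produce an admissible curve from $(c_0,w_0)$ along which $|w|$ stays constant while $c$ increases without bound, then invoke Lemma~\ref{lem-round}. The difference is in the choice of control. The paper simply takes the pure time direction $\xi>0$, $\omega=0$ (equivalently, the constant control $u=(1,0,0)$, i.e.\ the left translate of the one-parameter subgroup $\exp(te_1)$). With $\omega=0$ one reads off from Lemma~\ref{lem-admissible-velocities} that $\dot c=\xi>0$ and $\dot w=-i\xi w$, so $|w|$ is preserved automatically and $c$ grows linearly; no case distinction on $w_0$ or on $\eta$ is needed.

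Your detour through a light-like control $\omega=i\beta e^{-ic}w/|w|$ works, but it is prompted by a slightly misplaced concern. You correctly observe that freezing $w$ itself (not just $|w|$) forces $|\omega|=\xi|w|/\sqrt{1+|w|^2}$ and becomes inadmissible once $\eta|w|^2>1$. But freezing $w$ is not what is required; freezing $|w|$ is enough, and the simplest way to do that is $\omega=0$, which you already allow in your $w_0=0$ case. So the ``main obstacle'' you anticipate dissolves once you notice that the pure timelike control already keeps $|w|$ constant in all three regimes. Your argument is a valid alternative, just longer than necessary.
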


\begin{proof}
Consider the admissible curve with the constant control $u = (1,0,0)$ starting from the point $\id$.
Using formula~\eqref{eq-group-exp} we obtain the parametrization of this curve $\gamma_u(t) = (t/2, 0)$.
Starting from the point $(c_0,w_0)$ with this constant control $u$, with the help of the multiplication rule~\eqref{eq-mult}, we get the curve
$(c_0, w_0) \cdot (t/2, 0) = (c + t/2, w_0 e^{-t/2})$.
It is sufficient to apply Lemma~\ref{lem-round} to finish the proof.
\end{proof}

\subsection{\label{sec-atset-oblate}The oblate case}

First consider the oblate case $\eta \in (-1,0)$.
We prove that in this case the longest arcs do not exist.
More precisely, we show the complete controllability using sequences of constant controls.
So, as a consequence we prove that there is a closed admissible curve passing through any point.
This implies non existence of the longest arcs.

\begin{lemma}
\label{lem-pi-4}
If $\eta \in (-1,0)$, then the following inclusion is satisfied\emph{:}
$\left\{(c,w) \in G \, | \, c \geqslant \atan{\frac{\sqrt{\eta+1}}{\sqrt{-\eta}}} \right\} \subset \A$.
\end{lemma}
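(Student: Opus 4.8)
The plan is to exhibit a single admissible trajectory whose endpoint, as the parameter grows, sweeps out every modulus $|w| \in (0,\infty)$ while the coordinate $c$ stays strictly below $\alpha_0 := \atan\frac{\sqrt{\eta+1}}{\sqrt{-\eta}}$ and converges to it; the two monotonicity lemmas then fill in everything above $\alpha_0$. Concretely, I would take the constant control on the boundary of $U$ given by $u = (\sqrt{\eta+1},\,1,\,0)$. It is admissible, since $I_1 u_1^2 = I_2 = I_2 u_2^2 + I_3 u_3^2$ with $u_1 > 0$, so the one-parameter subgroup $g(t) = \exp\bigl(t(\sqrt{\eta+1}\,e_1 + e_2)\bigr)$ is a light-like admissible curve starting at $\id$. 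A one-line computation gives $\Kil(\sqrt{\eta+1}\,e_1 + e_2) = -(\eta+1) + 1 = -\eta > 0$, so its endpoint is governed by the spacelike branch of the exponential map, i.e.\ the $\Kil > 0$ row of~\eqref{eq-group-exp} (equivalently~\eqref{eq-circle-space-geodesic}).

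Next I would simply read off the coordinates. Writing $s$ for half the Killing-norm of $t(\sqrt{\eta+1}\,e_1 + e_2)$, the hyperbolic formula yields $q_0 + iq_1 = \cosh s + i\frac{\sqrt{\eta+1}}{\sqrt{-\eta}}\sinh s$ and $w = \frac{\sinh s}{\sqrt{-\eta}}$. Since both $q_0 = \cosh s > 0$ and $q_1 \geqslant 0$, the point stays in the first quadrant and no winding correction is needed, so $c = \arg(q_0 + iq_1) = \atan\!\left(\frac{\sqrt{\eta+1}}{\sqrt{-\eta}}\tanh s\right)$. This is strictly increasing from $c = 0$ and tends to $\alpha_0$ as $s \to \infty$, while $|w| = \frac{\sinh s}{\sqrt{-\eta}}$ ranges over all of $(0,\infty)$. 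Thus every point $(c(s), w(s))$ with $c(s) < \alpha_0$ and arbitrary positive modulus lies in $\A$.

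To conclude, fix a target $(c, w)$ with $c \geqslant \alpha_0$. If $w \neq 0$, choose $s$ with $|w(s)| = |w|$; then Lemma~\ref{lem-round} places the whole circle $\{(c(s), w') \mid |w'| = |w|\}$ in $\A$, and Lemma~\ref{lem-att-up}, which raises $c$ at fixed $w$, reaches $(c, w)$ because $c \geqslant \alpha_0 > c(s)$. The remaining case $w = 0$ is handled directly by the control $(1,0,0)$, whose trajectory $(t/2, 0)$ attains all $(c,0)$ with $c \geqslant 0 \supseteq \{c \geqslant \alpha_0\}$. Combining these gives the asserted inclusion $\{(c,w) \mid c \geqslant \alpha_0\} \subset \A$.

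The main obstacle is really the correct choice of the extremal (boundary) control together with the asymptotic computation showing that its $c$-coordinate converges to exactly $\atan\frac{\sqrt{\eta+1}}{\sqrt{-\eta}}$ while $|w|$ stays unbounded; this is precisely where the oblateness $\eta < 0$ enters, through $\Kil > 0$ and the $\tanh s \to 1$ limit. Once that calculation is in hand, everything else is a routine application of the symmetry lemma (Lemma~\ref{lem-round}) and the upward monotonicity lemma (Lemma~\ref{lem-att-up}). The only subtlety to verify with care is the branch of the argument, which is harmless here since the relevant endpoint never leaves the first quadrant.
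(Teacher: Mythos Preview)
Your proof is correct and follows essentially the same approach as the paper: you pick the same light-like one-parameter subgroup (your control $u=(\sqrt{\eta+1},1,0)$ is a scalar multiple of the paper's $u=(\tfrac{\sqrt{\eta+1}}{\sqrt{-\eta}},\tfrac{1}{\sqrt{-\eta}},0)$, giving the same trajectory up to reparametrization), read off the hyperbolic coordinates from~\eqref{eq-group-exp}, observe that $c\to\alpha_0$ while $|w|\to\infty$, and finish with Lemmas~\ref{lem-round} and~\ref{lem-att-up}. You are slightly more careful than the paper in checking the branch of the argument and in handling the $w=0$ case separately.
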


\begin{proof}
Consider the admissible curve $\gamma(\cdot)$ with the constant light-like control $u = (\frac{\sqrt{\eta+1}}{\sqrt{-\eta}},\frac{1}{\sqrt{-\eta}},0)$.
Note that $\Kil{u} = 1 > 0$.
According to formulas~\eqref{eq-group-exp} we obtain
$$
\gamma(t) = \left(\arg{\left\{\cosh{\frac{t}{2}} + i\frac{\sqrt{\eta+1}}{\sqrt{-\eta}} \sinh{\frac{t}{2}}\right\}}, \frac{1}{\sqrt{-\eta}}\sinh{\frac{t}{2}}\right).
$$
Note that $\arg{\{\cosh{\frac{t}{2}} + i\frac{\sqrt{\eta+1}}{\sqrt{-\eta}}\sinh{\frac{t}{2}}\}} \rightarrow \atan{\frac{\sqrt{\eta+1}}{\sqrt{-\eta}}}-0$
while $t \rightarrow +\infty$.
So, the required statement follows from Lemmas~\ref{lem-round}--\ref{lem-att-up}.
\end{proof}

\begin{lemma}
\label{lem-step}
Let $\eta \in (-1,0)$.
Assume that for fixed $c \in \R$ and $r > 0$ the set $\A_{c,r} = \{(c, w) \in G \, | \, |w| > r\}$ contains in the attainable set $\A$.
Then there exist $\varepsilon > 0$ and $r_1 > 0$ such that $\A_{(c-\varepsilon), r_1} \subset \A$.
\end{lemma}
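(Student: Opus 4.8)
The plan is to exploit the oblate condition $\eta < 0$: once $|w|$ is large enough, there are admissible velocities that strictly \emph{decrease} $c$, and I will propagate such a decrease across an entire annulus of radii using the rotational symmetry (Lemma~\ref{lem-round}) and the ``move up'' Lemma~\ref{lem-att-up}. The first step is to isolate the key computation. By Lemma~\ref{lem-admissible-velocities}, at a point $(c_0,w_0)$ the $c$-component of an admissible velocity equals $\xi + \frac{\Image{(w_0\bar\omega e^{-ic_0})}}{\sqrt{1+|w_0|^2}}$ with $\xi \geqslant \sqrt{\eta+1}\,|\omega| > 0$. Choosing the phase of $\omega$ so that $w_0\bar\omega e^{-ic_0}$ is negative imaginary, i.e.\ $\Image{(w_0\bar\omega e^{-ic_0})} = -|w_0|\,|\omega|$, and taking $\xi$ on the light-like boundary $\xi = \sqrt{\eta+1}\,|\omega|$, the $c$-velocity becomes $|\omega|\bigl(\sqrt{\eta+1} - \frac{|w_0|}{\sqrt{1+|w_0|^2}}\bigr)$. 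Since $\eta < 0$ forces $\sqrt{\eta+1} < 1$, this is strictly negative exactly when $|w_0| > \sqrt{\tfrac{\eta+1}{-\eta}}$; note that this threshold is $\tan$ of the angle appearing in Lemma~\ref{lem-pi-4}. Hence from any attainable point whose radius exceeds the threshold we may move to strictly smaller $c$ along an admissible, in fact light-like, curve.

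Next I would fix $|\omega| = 1$ together with the extremal phase above, so that the corresponding left-invariant control $v$ is a single light-like vector, independent of the radius, with $\Kil(v) > 0$; its flow $\exp(sv)$ is then computed explicitly from the $\Kil > 0$ branch of~\eqref{eq-group-exp}. For every $\rho > r$ the point $(c,\rho)$ is attainable by hypothesis, so the curve $\gamma_\rho(s) = (c,\rho)\cdot\exp(sv)$, obtained through the multiplication rule~\eqref{eq-mult}, is admissible and remains in $\A$. For $\rho$ above the threshold $\gamma_\rho$ decreases $c$ over a short time. I would then choose a common $\varepsilon > 0$ realized as a $c$-decrease for all large $\rho$, raise each endpoint back up to the exact level $c-\varepsilon$ by Lemma~\ref{lem-att-up}, and use Lemma~\ref{lem-round} to recover every phase of $w$. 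Finally, since the endpoint radius depends continuously on $\rho$ and tends to $+\infty$ as $\rho \to +\infty$, the intermediate value theorem shows that these endpoints fill a tail $|w| > r_1$ at the level $c-\varepsilon$, which is precisely the inclusion $\A_{(c-\varepsilon),r_1} \subset \A$.

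The hard part will be the uniform bookkeeping in this last step: producing a \emph{single} $\varepsilon > 0$ valid for all large $\rho$ while simultaneously guaranteeing that the endpoint radii sweep out a complete half-line $(r_1,+\infty)$. This forces me to track both $c(s)$ and $|w(s)|$ along $\gamma_\rho$ --- in particular to verify that $\dot c$ stays negative on a time interval that does not shrink as $\rho$ grows, and that $|w|$ never drops below a fixed $r_1$ --- which I would settle from the explicit expressions for $\exp(sv)$ and~\eqref{eq-mult} combined with a continuity and compactness argument.
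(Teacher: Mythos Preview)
Your plan is correct and follows essentially the same route as the paper: start from points $(c,\rho)$ with $\rho$ real (via Lemma~\ref{lem-round}), follow a constant light-like control whose flow is given by the $\Kil>0$ branch of~\eqref{eq-group-exp}, read off the endpoint through the product rule~\eqref{eq-mult}, and then clean up with Lemmas~\ref{lem-round}--\ref{lem-att-up}. The paper carries out exactly these steps.

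There is one genuine but minor difference worth noting. You fix a \emph{single} light-like control $v$ (the one minimizing $\dot c$ at $s=0$) for all starting radii $\rho$ and then argue the uniform $\varepsilon$ and the radius coverage by a limit $\rho\to\infty$ and the intermediate value theorem. The paper instead lets the direction of the control depend on $w_0$: it first fixes a time $t_1$, then for each $w_0$ chooses the phase of $u$ so that at time $t_1$ the two summands of $\hat w_u(t_1)$ in~\eqref{eq-mult} are orthogonal ($\arg\{w_0\bar w_u(t_1)e^{-i(c_0+c_u(t_1))}\}=-\frac{\pi}{2}$). This buys an exact Pythagorean formula for $|\hat w_u(t_1)|$ and an explicit expression for $\hat c_u(t_1)-c_0$ in terms of the auxiliary function $f(s)=\frac{s}{\sqrt{1+s^2}}$, from which the uniform lower bound on the $c$-drop and the whole tail of radii $|\hat w_u(t_1)|>r_1$ are read off directly, with no compactness or IVT needed. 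Your version avoids this phase trick at the cost of the soft analysis you flag as the ``hard part''; that analysis does go through (the $c$-drop $c_\rho(s_1)-c$ converges uniformly on compact $s$-intervals as $\rho\to\infty$ to a function with negative derivative at $0$, and $|w_\rho(s_1)|\to\infty$), so both variants are valid.
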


\begin{proof}
Consider an admissible curve starting from the identity point with a constant light-like control $u$ such that
$\bar{u} = \left(\frac{\sqrt{\eta+1}}{\sqrt{-\eta}},\bar{u}_2,\bar{u}_3\right)$ where $\sqrt{\bar{u}_2^2 + \bar{u}_3^2} = \frac{1}{\sqrt{-\eta}}$.
By formulas~\eqref{eq-group-exp} since $\Kil{u} = 1 > 0$ this curve have the parametrization
\begin{equation}
\label{eq-light}
c_u(t) = \atan{\left(\frac{\sqrt{\eta+1}}{\sqrt{-\eta}}\tanh{\frac{t}{2}}\right)}, \qquad w_u(t) = \sinh{\left(\frac{t}{2}\right)} (\bar{u}_2+i\bar{u}_3).
\end{equation}
Let us consider the family of such curves starting from the point $(c_0, w_0) \in \A_{c,r}$ such that $w_0 \in \R$, $w_0 > 0$
(it is possible due to Lemma~\ref{lem-round}).
We get the family of curves $(\hat{c}_u(t), \hat{w}_u(t)) = (c_0, w_0) \cdot (c_u(t), w_u(t))$.

Consider the following function $f: [0, +\infty) \rightarrow \R$
$$
f(s) = \frac{s}{\sqrt{1+s^2}}.
$$
It is easy to see that this function increase up to $1$. Indeed,
\begin{equation}
\label{eq-lim}
f'(s) = \frac{1}{(1+s^2)^{3/2}} > 0, \qquad \lim\limits_{s \rightarrow +\infty}{\frac{s}{\sqrt{1+s^2}}} = 1.
\end{equation}
Moreover, $\lim\limits_{s \rightarrow 0+}{\frac{f(s)}{f(as)}} = \frac{1}{a}$ for $a \neq 0$.

In particular, in the case $\eta < 0$ we get
$$
\lim\limits_{s \rightarrow 0+}{\frac{\sqrt{\eta+1}}{\sqrt{-\eta}} \frac{f\left(\sinh{\frac{t}{2}}\right)}{f(|w_u(t)|)}} = \sqrt{\eta+1} < 1.
$$
Indeed, here $s = \sinh{(\frac{t}{2})}$ and from~\eqref{eq-light} we have $|w_u(t)| = \frac{s}{\sqrt{-\eta}}$ and $a = \frac{1}{\sqrt{-\eta}}$.

Thus, one can choose some fixed $t_1$ such that
$$
\frac{\sqrt{\eta+1}}{\sqrt{-\eta}} \frac{f\left(\sinh{\frac{t_1}{2}}\right)}{f(|w_u(t_1)|)} < 1.
$$
Due to~\eqref{eq-lim} there exists $r_0 > r > 0$ such that for any $|w_0| > r_0$ we obtain
\begin{equation}
\label{eq-w0}
\frac{\sqrt{\eta+1}}{\sqrt{-\eta}} \frac{f\left(\sinh{\frac{t_1}{2}}\right)}{f(|w_u(t_1)|)} < f(r_0) < f(|w_0|) < 1.
\end{equation}
It follows that
$$
c_u(t_1) = \atan{\left(\frac{\sqrt{\eta+1}}{\sqrt{-\eta}}\tanh{\frac{t_1}{2}}\right)}  =
\atan{\left(\frac{\sqrt{\eta+1}}{\sqrt{-\eta}} f\left(\sinh{\frac{t_1}{2}}\right)\right)} <
$$
$$
< \atan{\left(f(r_0)f(|w_u(t_1)|)\right)} < \atan{\left(f(|w_0|)f(|w_u(t_1)|)\right)},
$$
for any $|w_0| > r_0$ since the functions $\atan$ and $f$ increase.

We can choose $u$ such that $\arg{\{w_0\bar{w}_u(t_1)e^{-i(c_0 + c_u(t_1))}\}} = -\frac{\pi}{2}$. Using the multiplication rule~\eqref{eq-mult} we can obtain
\begin{equation}
\label{eq-new-c}
\hat{c}_u(t_1) = c_0 + c_u(t_1) - \atan{\frac{|w_0|}{\sqrt{1 + |w_0|^2}} \frac{|w_u(t_1)|}{\sqrt{1 + |w_u(t_1)|^2}}}.
\end{equation}

From~\eqref{eq-new-c} we get
$$
\hat{c}_u(t_1) - c_0 = \atan{\left(\frac{\sqrt{\eta+1}}{\sqrt{-\eta}} f\left(\sinh{\frac{t_1}{2}}\right)\right)} - \atan{\left(f(|w_0|)f(|w_u(t_1)|)\right)} < 0.
$$
Put $\varepsilon = c_0 - \hat{c}_u(t_1) > 0$.
Next, using the multiplication rule~\eqref{eq-mult} note that
$$
\hat{w}_u(t_1) = w_u(t_1) e^{ic_0}\sqrt{1 + |w_0|^2} + w_0 e^{-ic_u(t_1)}\sqrt{1 + |w_u(t_1)|^2}.
$$
This is equal to
\begin{equation}
\label{eq-w}
\hat{w}_u(t_1) = e^{i(2c_0 + c_u(t_1))} \left(
w_u(t_1)e^{-i(c_0+c_u(t_1))}\sqrt{1 + |w_0|^2} + w_0 e^{-2i(c_0 + c_u(t_1))}\sqrt{1 + |w_u(t_1)|^2} \right).
\end{equation}
Let us compute the angle between two terms of the sum in the brackets.
It is equal to
$$
\arg{\{w_0 e^{-2i(c_0 + c_u(t_1))}\}} - \arg{\{w_u(t_1)e^{-i(c_0+c_u(t_1))}\}} = \arg{\{w_0 \bar{w}_u(t_1)e^{-i(c_0+c_u(t_1))}\}} = -\frac{\pi}{2}
$$
due to the our choice of $u$.
Thus, for the absolute value of $\hat{w}_u(t_1)$ by the Pythagoras theorem we have
$$
|\hat{w}_u(t_1)|^2 = |w_u(t_1)|^2 (1 + |w_0|^2) + |w_0|^2 (1 + |w_u(t_1)|^2).
$$
So, choosing suitable $|w_0| > r_0$ we can get any value of $|\hat{w}_u(t_1)|$ greater than
$$
r_1 = \sqrt{|w_u(t_1)|^2 (1 + r_0^2) + r_0^2 (1 + |w_u(t_1)|^2)}.
$$
Due to inequality~\eqref{eq-w0} we get that for the chosen $w_0$ the corresponding value $\hat{c}_u(t_1) < c_0 - \varepsilon$.
It is sufficient to apply Lemmas~\ref{lem-round}--\ref{lem-att-up} to conclude that $\A_{(c-\varepsilon), r_1} \subset \A$.
\end{proof}

\begin{lemma}
\label{lem-far}
Let $\eta \in (-1,0)$.
For any $c \in \R$ there exists $w \in \C$ \emph{(}for sufficiently large $|w|$\emph{)} such that $(c, w) \in \A$.
\end{lemma}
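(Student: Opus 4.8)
The plan is to descend from the half-space furnished by Lemma~\ref{lem-pi-4} by iterating the step-down Lemma~\ref{lem-step}, and to verify that the $c$-levels so obtained actually run off to $-\infty$.

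First I would fix the base of the induction. Lemma~\ref{lem-pi-4} gives $\{(c,w) \in G \mid c \geqslant c_0\} \subset \A$ with $c_0 = \atan{\frac{\sqrt{\eta+1}}{\sqrt{-\eta}}}$; in particular the level set $\A_{c_0,0} = \{(c_0,w) \mid |w| > 0\}$ lies in $\A$, which is exactly the hypothesis required to launch Lemma~\ref{lem-step}.

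The crucial observation, which I would read off from the proof of Lemma~\ref{lem-step}, is that the decrease it produces,
$$
\varepsilon = \atan{\left(f(|w_0|)\,f(|w_u(t_1)|)\right)} - c_u(t_1),
$$
depends only on the parameters $t_1$, $|w_0|$ and $\eta$, and \emph{not} on the level $c_0$ from which we start. Moreover $\varepsilon$ is increasing in $|w_0|$ and tends to $\atan{\bigl(f(|w_u(t_1)|)\bigr)} - c_u(t_1)$ as $|w_0| \to +\infty$. I would therefore choose a suitable (e.g.\ sufficiently small) $t_1$ making this limiting value strictly positive for the given $\eta$, and set $\varepsilon_* > 0$ to be half of it. Then for any level at which $\A_{c_k, r_k} \subset \A$ with $r_k$ finite, taking $|w_0|$ larger than both $r_k$ and the threshold that forces $\varepsilon > \varepsilon_*$ lets Lemma~\ref{lem-step} produce a finite $r_{k+1}$ with $\A_{c_k - \varepsilon_*, r_{k+1}} \subset \A$. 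Since the same $\varepsilon_*$ is usable at every level, induction yields admissible levels $c_k \leqslant c_0 - k\varepsilon_*$ with $\A_{c_k, r_k} \subset \A$, whence $c_k \to -\infty$.

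Finally, given an arbitrary $c \in \R$, I would pick $k$ with $c_k \leqslant c$ and any $w$ with $|w| > r_k$, so that $(c_k, w) \in \A$; since $c \geqslant c_k$, Lemma~\ref{lem-att-up} gives $(c,w) \in \A$ with $|w|$ as large as we please. The main obstacle is precisely the step I flagged: ensuring the per-step decrease stays bounded below by a fixed positive constant, so that the sequence $c_k$ escapes to $-\infty$ rather than converging to a finite limit. This is why it is essential both that $\varepsilon$ in Lemma~\ref{lem-step} is independent of the current level and that $t_1$ can be tuned so that the limiting decrease is positive for the given $\eta$.
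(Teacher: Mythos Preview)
Your approach is essentially the paper's: start from the level supplied by Lemma~\ref{lem-pi-4}, iterate Lemma~\ref{lem-step}, and use Lemma~\ref{lem-att-up} to reach intermediate heights. The paper's proof is terser and simply writes $c \in \atan\tfrac{\sqrt{\eta+1}}{\sqrt{-\eta}} - \varepsilon\,\mathbb{N}$, implicitly taking a single $\varepsilon$; you have correctly identified and justified the point the paper glosses over, namely that the decrement in Lemma~\ref{lem-step} can be bounded below by a fixed $\varepsilon_*>0$ independent of the current level (by choosing $t_1$ once and then $|w_0|$ large enough at each step).
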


\begin{proof}
Indeed, starting from the set $\{(\atan{\frac{\sqrt{\eta+1}}{\sqrt{-\eta}}}, w) \, | \, w\in \C\} \subset \A$ (see Lemma~\ref{lem-pi-4}) and applying Lemma~\ref{lem-step} many times can
we get any $c \in \atan{\frac{\sqrt{\eta+1}}{\sqrt{-\eta}}} - \varepsilon \N$. Next, by Lemma~\ref{lem-att-up} we get any $c \in \R$.
\end{proof}

\begin{proposition}
\label{prop-full-control}
Assume that $\eta \in (-1, 0)$.\\
\emph{(1)} The system is full controlled, i.e., $\A = G$.\\
\emph{(2)} There exists an admissible cycle passing through every point.\\
\emph{(3)} There are no optimal solutions.
\end{proposition}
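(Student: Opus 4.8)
The plan is to assemble the three assertions from the lemmas already established for the oblate case, since the hard analytic work has been done in Lemmas~\ref{lem-round}--\ref{lem-far}. First I would prove item~(1). By Lemma~\ref{lem-far}, for any fixed $c \in \R$ there exists $w \in \C$ with sufficiently large $|w|$ such that $(c, w) \in \A$. Once a single point $(c, w_0)$ at height $c$ is attainable, Lemma~\ref{lem-round} gives the whole circle $\{(c, w) \mid |w| = |w_0|\} \subset \A$. To fill in the points with $|w| < |w_0|$ at the same height $c$, I would apply the constant control $u = (1,0,0)$ as in the proof of Lemma~\ref{lem-att-up}: starting from $(c, w_0)$ this control produces the curve $(c + t/2, w_0 e^{-t/2})$, whose radial coordinate $|w_0|e^{-t/2}$ decreases continuously from $|w_0|$ to $0$. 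Combining this with Lemma~\ref{lem-round} at each height shows that by first overshooting to a larger height with a large radius (again via Lemma~\ref{lem-far}) and then sliding down, every radius at every height is reachable. Hence $\A = G$.

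For item~(2), I would fix an arbitrary point $q \in G$ and construct a closed admissible curve through $q$. Since $\A = G$, the point $q$ is attainable from $\id$ by an admissible curve $\gamma_1$. The key observation is that the group is complete: applying the left-translation by $q$ to the fact that $\id$ itself is attainable from any of its neighbors, or more directly, using that $\A = G$ from \emph{every} point (by left-invariance the attainable set from any point $g$ is $g \cdot \A = g \cdot G = G$), the point $q$ is also attainable \emph{from} $q$ by a nonconstant admissible curve $\gamma_2$ of positive length. Concatenating, the loop $\gamma_2$ based at $q$ is an admissible cycle through $q$. The one subtlety I would be careful about is that $\gamma_2$ must be a genuine cycle of positive Lorentzian length and not the trivial constant path; this is guaranteed because admissible velocities satisfy $\xi \geqslant \sqrt{\eta+1}|\omega| > 0$ (Lemma~\ref{lem-admissible-velocities}), so any admissible curve strictly increases the $c$-coordinate locally, yet the global controllability $\A = G$ forces a return to the same $c$, producing a nontrivial loop.

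Finally, item~(3) follows immediately from item~(2) together with the principle, noted in the introduction, that the existence of an admissible cycle implies the non-existence of the longest arc. Concretely, given any admissible curve from $\id$ to a terminal point $q_1$, one may insert the admissible cycle through $q_1$ (or through any intermediate point) arbitrarily many times, each insertion strictly increasing the Lorentzian length by the positive length of the cycle. Hence the supremum of lengths is $+\infty$ and is never attained, so no longest arc exists.

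I expect the main obstacle to be item~(2): the lemmas directly give controllability ($\A = G$), but turning this into an \emph{admissible cycle} requires the observation that controllability holds from every point (via left-invariance) so that a point can be joined to itself by a nonconstant admissible path. The monotonicity of the $c$-coordinate along admissible curves (from Lemma~\ref{lem-admissible-velocities}) is what guarantees the resulting loop is nontrivial and has positive length, which is the crux of the argument.
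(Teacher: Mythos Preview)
Your plan has a genuine gap in item~(1). You assert that the constant control $u=(1,0,0)$ starting from $(c_0,w_0)$ produces the curve $(c_0+t/2,\,w_0 e^{-t/2})$, whose radial coordinate decreases to $0$. This is not correct: applying the multiplication rule~\eqref{eq-mult} to $(c_0,w_0)\cdot(t/2,0)$ gives
\[
w = w_0\sqrt{1+0}\,e^{-i t/2} = w_0 e^{-it/2},
\]
so $|w|=|w_0|$ stays constant (the missing $i$ in the displayed formula in the proof of Lemma~\ref{lem-att-up} is a typo; that lemma only concludes that $\{(c,w_0)\mid c\geqslant c_0\}\subset\A$, which would be false if $|w|$ shrank). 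Hence your ``slide down the radius'' step does nothing, and the argument never reaches points with small $|w|$, in particular never reaches the axis $w=0$. Lemmas~\ref{lem-round}--\ref{lem-far} by themselves only give, for each $c$, attainability of points with $|w|$ \emph{large}. The paper closes this gap differently: from a far-out point $(c_0,w_0)$ it fires one more constant light-like control, chosen (via formula~\eqref{eq-w}) so that the two complex summands cancel and one lands exactly at $w=0$; then Lemma~\ref{lem-pi-4}, applied from that axis point, fills in everything above it. You will need some such mechanism to decrease $|w|$.

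A secondary issue in item~(2): your claim that ``any admissible curve strictly increases the $c$-coordinate locally'' is false in the oblate case. The $c$-component of an admissible velocity at $(c_0,w_0)$ is $\xi+\mathrm{Im}(w_0\bar\omega e^{-ic_0})/\sqrt{1+|w_0|^2}$ (Lemma~\ref{lem-admissible-velocities}); since $\eta<0$ allows $|\omega|>\xi$, this can be negative for large $|w_0|$, and indeed this is exactly the mechanism behind Lemma~\ref{lem-step}. The nontriviality of the cycle is better argued directly: once $\A=G$, reach a point $q'\neq \id$ and then (by left-invariance) return from $q'$ to $\id$; inserting a strictly time-like segment such as the control $(1,0,0)$ guarantees the cycle has positive Lorentzian length, which is what item~(3) actually needs.
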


\begin{proof}
(1) For the given $c \in \R$ let us start from the point $(c_0,w_0) = (c - 2\atan{\frac{\sqrt{\eta+1}}{\sqrt{-\eta}}} - \frac{\pi}{2}, w_0) \in \A$ for sufficiently large $|w_0|$ by curves $(\hat{c}_u(t),\hat{w}_u(t)) = (c_0,w_0) \cdot (c_u(t), w_u(t))$ with constant light-like controls. The start point belongs to the attainable set by Lemma~\ref{lem-far}.
It follows from~\eqref{eq-w} that for suitable $w_0$ and $t_1$ we can get $\hat{w}_u(t_1) = 0$.
Indeed, due to~\eqref{eq-lim} we can choose sufficiently big $t_1$ such that
$$
\frac{|w_u(t_1)|}{\sqrt{1+|w_u(t_1)|^2}} = \frac{|w_0|}{\sqrt{1+|w_0|^2}}.
$$
Thus, we may assume the two complex numbers in brackets in formula~\ref{eq-w} have the same absolute value.
Next, choose $\arg{w_0}$ such that the arguments of these numbers become opposite.

Moreover, since for light-like control we have $c_u(t_1) < \atan{\frac{\sqrt{\eta+1}}{\sqrt{-\eta}}}$, see~\eqref{eq-light}.
Hence, from the multiplication rule~\eqref{eq-mult} it follows that $\hat{c}_u(t_1) < c - \atan{\frac{\sqrt{\eta+1}}{\sqrt{-\eta}}}$.
By Lemma~\ref{lem-att-up} we see that $(c - \atan{\frac{\sqrt{\eta+1}}{\sqrt{-\eta}}}, 0) \in \A$. Finally, applying Lemma~\ref{lem-pi-4} we get $\{(c, w) \, | \, w \in \C\} \subset \A$,
it follows that $\A = G$.

(2) Let us show that there exists an admissible cycle passing through the point $\id$. First, by~(1) the point $(-1, 0)$ is attainable.
Second, start from the point $(-1, 0)$ with control $(1, 0, 0)$ and get the point $\id$.

(3) immediately follows from~(2).
\end{proof}

So, item~(1) of Theorem~\ref{th-A} in the three dimensional case follows from Proposition~\ref{prop-full-control}.

\subsection{\label{sec-atset-symmetric}The symmetric case}

In this subsection, we assume that $\eta = 0$. We describe the attainable set along admissible trajectories and the attainable set along geodesics which are different in this case. The results of this section in the part concerning these attainable sets are well known, see~\cite[Prop.~6]{grong-vasiliev}.
See also the structure of the Lie exponential map for the group $\SL_2(\R) \simeq \SU_{1,1}$ in any textbook on Lie theory, for example~\cite[Ex.~3.22]{hall}.
However, we put here these results in order to illustrate the contrast with other cases.
It seems to us that optimality of geodesics have not been studied completely in this case.
We refer to paper~\cite{ali-sachkov} were the 2-dimensional anti de-Sitter space is considered which is the axial section of our Lorentzian manifold in the case $\eta = 0$.

Let us prove that the attainable set is $\A = \{(c,w) \in G \, | \, c \geqslant \atan{|w|}\}$ in the symmetric case.
To do this we consider light-like extremal trajectories from the point $(0,0)$ (see Lemma~\ref{lem-light-like-boundary} below) and
then we prove that the cone of admissible velocities at any point of a light-like extremal trajectory is directed inside the set $\A$ (see Lemma~\ref{lem-inside}).
After that we deduce that generally speaking the corresponding Lorentzian problem has no optimal solution.
The main argument is that the attainable set along extremal trajectories does not coincide with the attainable set $\A$ along admissible trajectories (see Lemma~\ref{lem-atset-extr} and Fig.~\ref{pic-symmetric-case-atset}).

\begin{lemma}
\label{lem-light-like-boundary}
The light-like trajectories starting from the point $(0,0)$ sweep the surface defined by equation $c = \atan{|w|}$.
The normal vector to this surface at a point $(c_0,w_0) \neq (0,0)$ is proportional to the vector $\left(1+|w_0|^2, -\frac{w_0}{|w_0|}\right)$.
\end{lemma}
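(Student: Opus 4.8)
The plan is to read the swept surface directly off the light-like geodesic equations of Proposition~\ref{prop-circle-geodesics-in-coordinates}(2), specialized to the symmetric case $\eta = 0$. By Proposition~\ref{prop-energy}(4) a light-like geodesic has initial covector with $\Kil(h) = 0$ and $h_1 < 0$. Since $\eta = 0$ forces $\argl = 0$, we have $\stl = 0$ and $\ctl = 1$, so the equations~\eqref{eq-circle-light-geodesic} collapse to the simple form $q_0(t) = 1$, $q_1(t) = -\frac{t}{2 I_2} h_1$, and $(q_2(t), q_3(t)) = \frac{t}{2 I_2}(h_2, h_3)$. First I would substitute this and extract $c$ and $w$.

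From $q_0 + i q_1 = 1 - i \frac{t}{2 I_2} h_1$ with $h_1 < 0$ we get $c(t) = \atan\!\bigl(-\frac{t}{2 I_2} h_1\bigr) > 0$, while $w(t) = \frac{t}{2 I_2}(h_2 + i h_3)$ gives $|w| = \frac{t}{2 I_2}\sqrt{h_2^2 + h_3^2}$. The light-like condition $\Kil(h) = -h_1^2 + h_2^2 + h_3^2 = 0$ yields $\sqrt{h_2^2 + h_3^2} = |h_1| = -h_1$, hence $|w| = -\frac{t}{2 I_2} h_1$ and therefore $c = \atan|w|$, showing every light-like trajectory lies on the stated surface. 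For surjectivity I would note that fixing $t$ and $h_1$ and rotating $(h_2, h_3)$ along the cone $\Kil(h) = 0$ sweeps every argument of $w$, while letting $t$ range over $[0, +\infty)$ makes $|w|$ cover $[0, +\infty)$; thus the whole surface $c = \atan|w|$ is attained.

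For the normal vector I would regard the surface as the zero set of the smooth function $F(c, w) = c - \atan|w|$ (with $w \in \R^2$), which is differentiable away from $w = 0$ — precisely why the point $(0,0)$ is excluded. Computing the gradient gives $\nabla F = \bigl(1, -\frac{1}{1+|w|^2}\frac{w}{|w|}\bigr)$, and rescaling by the positive factor $1 + |w|^2$ produces the claimed normal $\bigl(1 + |w_0|^2, -\frac{w_0}{|w_0|}\bigr)$. The computation is elementary throughout; the only point requiring care is the sign and branch bookkeeping — invoking Proposition~\ref{prop-energy}(4) to guarantee $h_1 < 0$ so that $c > 0$, and checking that the convention whereby $c$ counts turns of $q_0 + i q_1$ agrees with the principal value $\atan|w| \in [0, \frac{\pi}{2})$ over the range actually swept.
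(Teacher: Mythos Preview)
Your argument is correct and follows essentially the same route as the paper's own proof: specialize the light-like geodesic formulas~\eqref{eq-circle-light-geodesic} to $\eta=0$, use $\Kil(h)=0$ to identify $|w|$ with $-\tfrac{t}{2I_2}h_1$, and then differentiate $\atan|w|$ to read off the normal to the surface of revolution. If anything, you are slightly more explicit than the paper in justifying $h_1<0$ via Proposition~\ref{prop-energy}(4), in spelling out surjectivity, and in writing the gradient computation as a level-set argument.
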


\begin{proof}
It follows from formulas~\eqref{eq-circle-light-geodesic} that we have the following parametrization of a light-like extremal trajectory
$$
c(t) = \arg{\left\{1-i\frac{t}{2I_2}h_1\right\}}, \qquad w(t) = \frac{t}{2I_1}(h_2 + ih_3), \qquad h_1^2 = h_2^2 + h_3^2.
$$
This implies that
$$
c(t) = \atan{\left(\frac{t}{2I_2}|h_1|\right)}, \qquad |w(t)| = \frac{t}{2I_2}|h_1|.
$$
So, we get $c(t) = \atan{|w(t)|}$ for the light-like extremal trajectories from the point $(0,0)$.
Moreover, since $\atan'{|w|} = \frac{1}{1+|w|^2}$ we get the required normal vector to the corresponding surface of revolution.
\end{proof}

\begin{lemma}
\label{lem-inside}
The Euclidian scalar product of any admissible velocity at a point $(c_0, w_0) \neq (0,0)$ and the vector
$$
\left(1+|w_0|^2, -\frac{w_0}{|w_0|}\right)
$$
is non negative.
\end{lemma}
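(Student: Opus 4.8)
The plan is to substitute the explicit form of admissible velocities supplied by Lemma~\ref{lem-admissible-velocities} and evaluate the pairing directly. I identify the tangent space with $\R \times \C$ and use the Euclidean pairing $\langle z_1, z_2 \rangle = \Real(z_1 \bar{z}_2)$ on the $\C$-factor. Recall that in the symmetric case $\eta = 0$ the admissibility constraint from Lemma~\ref{lem-admissible-velocities} reads $\xi \geqslant |\omega|$. Pairing the admissible velocity
$$
\left(\xi + \frac{\Image(w_0\bar{\omega}e^{-ic_0})}{\sqrt{1+|w_0|^2}}, \ \omega\sqrt{1+|w_0|^2}e^{ic_0} - iw_0\xi\right)
$$
against the normal vector $\left(1+|w_0|^2, -\frac{w_0}{|w_0|}\right)$ from Lemma~\ref{lem-light-like-boundary}, I obtain a real-valued expression to be shown nonnegative (note that $w_0 \neq 0$ along the light-like boundary, so the normal is well defined).

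First I would expand the $\C$-component contribution. Writing $\overline{-w_0/|w_0|} = -\bar{w}_0/|w_0|$ and distributing, the piece coming from $-iw_0\xi$ is $\frac{1}{|w_0|}\Real(iw_0\bar{w}_0\xi) = \frac{1}{|w_0|}\Real(i|w_0|^2\xi)$, which vanishes since $i$ times a real number is purely imaginary. What remains is $-\frac{\sqrt{1+|w_0|^2}}{|w_0|}\Real(\omega\bar{w}_0 e^{ic_0})$, and since $\omega\bar{w}_0 e^{ic_0} = \overline{w_0\bar{\omega}e^{-ic_0}}$ this equals $-\frac{\sqrt{1+|w_0|^2}}{|w_0|}\Real(z)$ with $z := w_0\bar{\omega}e^{-ic_0}$. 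Combining with the $\partial_c$-contribution, the whole scalar product is
$$
\xi(1+|w_0|^2) + \sqrt{1+|w_0|^2}\,\Image(z) - \frac{\sqrt{1+|w_0|^2}}{|w_0|}\,\Real(z).
$$

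The key step is to bound the two cross terms from below. Writing $z = |z| e^{i\theta}$, the factor $\Image(z) - \Real(z)/|w_0| = |z|\bigl(\sin\theta - \cos\theta/|w_0|\bigr)$ has minimal value $-|z|\sqrt{1 + 1/|w_0|^2} = -|z|\frac{\sqrt{1+|w_0|^2}}{|w_0|}$, the amplitude of a sinusoid $a\sin\theta + b\cos\theta$. Substituting $|z| = |w_0|\,|\omega|$, the cross terms are therefore at least $-(1+|w_0|^2)|\omega|$, whence the scalar product is bounded below by
$$
\xi(1+|w_0|^2) - (1+|w_0|^2)|\omega| = (1+|w_0|^2)(\xi - |\omega|).
$$
Since $\xi \geqslant |\omega|$ by admissibility, this is nonnegative, which proves the lemma.

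I expect the only genuine obstacle to be the bookkeeping of real and imaginary parts in the second step, above all verifying that the $\xi$-dependent part of the $\C$-component pairs to zero against the normal direction. Once that cancellation is in hand, the remaining estimate is exactly the amplitude bound for $a\sin\theta + b\cos\theta$, after which the admissibility inequality $\xi \geqslant |\omega|$ closes the argument.
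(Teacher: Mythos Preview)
Your proof is correct and follows essentially the same route as the paper: both compute the pairing explicitly, observe that the $\xi$-term in the $\C$-component drops out, and then control the remaining cross terms via the amplitude bound for $a\sin\theta+b\cos\theta$. The only cosmetic difference is that the paper normalizes to $\xi=1$, $|\omega|=1$ and rewrites the result as $\sqrt{1+|z|^2}\bigl(1-\cos(\alpha+\varphi)\bigr)$, whereas you keep $\xi$ and $|\omega|$ general and land on $(1+|w_0|^2)(\xi-|\omega|)\geqslant 0$; your version is if anything slightly cleaner.
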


\begin{proof}
Let $(a_1, b_1), (a_2, b_2) \in \R \times \C$ be two vectors. Their scalar product reads as $a_1a_2 + \Real{b_1\bar{b}_2}$.
Substituting an admissible velocity at a point $(c_0, w_0)$ from Lemma~\ref{lem-admissible-velocities} and the vector from the statement of Lemma~\ref{lem-inside} we obtain
$$
(1+|w_0|^2)\xi + \sqrt{1+|w_0|^2}\Image{\left(w_0\bar{\omega}e^{-ic_0}\right)} - \Real{\left(\bar{w}_0\omega e^{ic_0}\right)}\frac{\sqrt{1+|w_0|^2}}{|w_0|} + \Real{i\frac{w_0\bar{w}_0}{|w_0|}\xi}.
$$
Note that the last term equals zero.
Without loss of generality we may assume that $\xi = 1$, it follows that $|\omega| = 1$ due to $\eta = 0$.
Denote $z = w_0\bar{\omega}e^{-ic_0}$, then $\bar{z} = \bar{w}_0\omega e^{ic_0}$.
Since $\omega$ is an arbitrary complex number on the unit circle, then $|z| = |w_0|$ and the sign of the upper expression equals to the sign of
$$
\sqrt{1+|z|^2} \left(\sqrt{1+|z|^2} + \Image{z} - \frac{\Real{\bar{z}}}{|z|} \right),
$$
where the first multiplier is positive.
Denoting $\arg{z}$ by $\alpha$ we get that the second multiplier is equal to
$$
\sqrt{1+|z|^2} + |z|\sin{\alpha} - \cos{\alpha} = \sqrt{1+|z|^2}\left(1 + \frac{|z|}{\sqrt{1+|z|^2}}\sin{\alpha} - \frac{1}{\sqrt{1+|z|^2}}\cos{\alpha}\right),
$$
where the first multiplier is positive, while the second is non negative, since it equals
$$
1 - \cos{(\alpha + \varphi)} \geqslant 0,
$$
where $\varphi = \arccos{\frac{1}{\sqrt{1+|z|^2}}}$.
\end{proof}

\begin{lemma}
\label{lem-atset-extr}
The attainable set along extremal trajectories reads as
$$
\A_{extr} = \{(c,w) \in G \, | \, c = \arctan{|w|}\} \cup \{(c,w) \in G \, | \, \pi k + \arctan{|w|} < c < \pi(k+1) - \arctan{|w|}, \ k = 0,1,\dots\}.
$$
\end{lemma}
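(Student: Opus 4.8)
The plan is to compute separately the image of the two admissible families of geodesics — light-like ($\Kil h = 0$) and time-like ($\Kil h < 0$) — using the explicit parametrizations of Proposition~\ref{prop-circle-geodesics-in-coordinates} specialized to $\eta = 0$; the space-like covectors $\Kil h > 0$ give no admissible trajectories and are discarded. The light-like part is already available: by Lemma~\ref{lem-light-like-boundary} the light-like geodesics from $(0,0)$ sweep exactly the surface $c = \arctan{|w|}$, which is the first set in the claimed formula. So the real content is to show that the time-like geodesics sweep exactly the open bands $\pi k + \arctan{|w|} < c < \pi(k+1) - \arctan{|w|}$.

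For the time-like family I would set $\eta = 0$ in~\eqref{eq-circle-time-geodesic}; the rotation $R_{\tau\eta\bar h_1}$ becomes the identity and the phase factors $\se,\ce$ drop out, leaving (writing $\rho = \sqrt{\bar h_2^2 + \bar h_3^2}$ and, since $h_1 < 0$ on time-like covectors, $b = |\bar h_1| = \sqrt{1+\rho^2}$)
$$
q_0(\tau) + i q_1(\tau) = \cos{\tau} + i b \sin{\tau}, \qquad |w(\tau)| = \rho\,|\sin{\tau}|,
$$
so that $c(\tau) = \arg\{\cos{\tau} + i b \sin{\tau}\}$ (continued to count turns) is strictly increasing with $c(\pi k) = \pi k$, and $\tan{c(\tau)} = b\tan{\tau}$. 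The first job is the containment statement, that on the interval $\tau \in (\pi k, \pi(k+1))$ the geodesic lies inside the $k$-th open band. The lower inequality $c > \pi k + \arctan{|w|}$ reduces, via $\tan{c} = b\tan{\tau}$ together with $b > \rho$, to $c > \arctan(\rho\sin{\tau}) = \arctan{|w|}$ on the ascending half $\tau \in (\pi k, \pi k + \tfrac{\pi}{2})$, and is immediate on the descending half where $c > \pi k + \tfrac{\pi}{2} > \arctan{|w|}$. The upper inequality then follows from the reflection symmetry $\tau \mapsto \pi(2k+1) - \tau$, which fixes $|w|$ and sends $c \mapsto \pi(2k+1) - c$.

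The second job is surjectivity onto the open bands. By the rotational symmetry of Lemma~\ref{lem-round} I may prescribe the direction of the target $w$, so it suffices to hit every pair $(|w|,c) = (r,\gamma)$ with $r > 0$ and $\gamma$ in the band; shifting $\tau$ by $\pi k$ reduces everything to $k = 0$. Eliminating $\tau$ from the two relations above yields the explicit one-parameter curve
$$
|w|(b) = \frac{\sqrt{b^2-1}\,\sin{\gamma}}{\sqrt{b^2\cos^2{\gamma} + \sin^2{\gamma}}}, \qquad b \in (1,\infty).
$$
This function is continuous with $\lim_{b \to 1^+}|w|(b) = 0$ and $\lim_{b \to \infty}|w|(b) = \tan(\min(\gamma,\pi-\gamma))$ (and $|w|(b) = \sqrt{b^2-1} \to \infty$ in the borderline case $\gamma = \tfrac{\pi}{2}$). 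Since the band condition is precisely $r < \tan(\min(\gamma,\pi-\gamma))$, the intermediate value theorem produces a $b$ with $|w|(b) = r$, giving the required geodesic. Together with the containment this identifies the time-like image with the union of open bands.

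The step I expect to be most delicate is the bookkeeping of the turn-counting lift and the status of the axis refocusing points $(\pi k, 0)$ with $k \geqslant 1$: every time-like geodesic passes through all of them at $\tau = \pi k$, so one must state carefully in what sense the bands are open and how these transition points — which are exactly the conjugate/cut points analysed later, cf. Theorem~\ref{th-B} — lie on the common boundary of consecutive bands. Everything else is the elementary trigonometric analysis sketched above.
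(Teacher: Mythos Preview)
Your argument is correct and follows the same route as the paper: both specialize~\eqref{eq-circle-time-geodesic} to $\eta = 0$ to obtain $\tan c = |\bar h_1|\tan\tau$ and $|w| = |\sin\tau|\sqrt{\bar h_1^2 - 1}$, then read off the band description from these two relations. The only presentational difference is that the paper packages containment and surjectivity together via the single ratio $\dfrac{|\tan c_1|}{|w|} = \dfrac{|\bar h_1|}{\sqrt{\bar h_1^2 - 1}\,|\cos\tau|}$ (where $c_1$ is the distance from $c$ to the nearest $\pi k$), observing that it ranges over $(1,+\infty)$, whereas you separate the two directions and verify surjectivity by an explicit intermediate-value argument.
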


\begin{proof}
See Fig.~\ref{pic-symmetric-case-atset}~(b).
We can attain the set $\{(c,w) \in G \, | \, c = \arctan{|w|}\}$ by the light-like extremal trajectories as it was shown in Lemma~\ref{lem-light-like-boundary}.
Consider now the time-like extremal trajectories. By formulas~\eqref{eq-circle-time-geodesic} we get the following parametrization of a time-like extremal trajectory:
$$
c(t) = \arg{\{\cos{\tau} - i\bar{h}_1\sin{\tau}\}}, \qquad w(t) = \sin{\tau} (\bar{h}_2 + i\bar{h}_3), \qquad \tau = \frac{t|h|}{2I_2}.
$$
Denoting by $c_1(t)$ the smallest distance from the point $c(t)$ to the points $\pi k$, where $k=0,1,\dots$, we obtain
$$
|\tan{c_1(t)}| = |\bar{h}_1| |\tan{\tau}|, \qquad |w(t)| = |\sin{\tau}| \sqrt{\bar{h}_2^2 + \bar{h}_3^2} = |\sin{\tau}| \sqrt{\bar{h}_1^2 - 1}.
$$
It follows that $\frac{|\tan{c_1(t)}|}{|w(t)|} = \frac{|\bar{h}_1|}{\sqrt{\bar{h}_1^2 - 1}}\frac{1}{|\cos{\tau}|}$.
Finally, notice that this expression gets any value in the interval $(1, +\infty)$ while $\bar{h}_1 \in [1, +\infty)$.
\end{proof}

\begin{figure}[t]
\minipage{0.32\textwidth}
  \includegraphics[width=\linewidth]{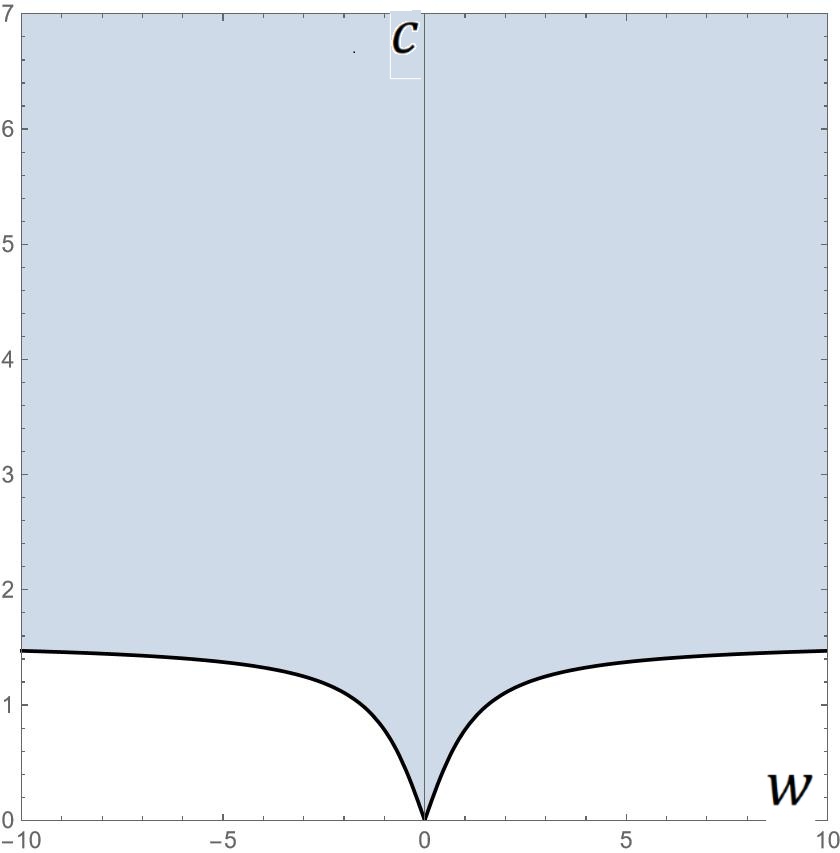}
  \\ \center{(a) The set attainable by admissible trajectories.}
\endminipage\hfill
\minipage{0.32\textwidth}
  \includegraphics[width=\linewidth]{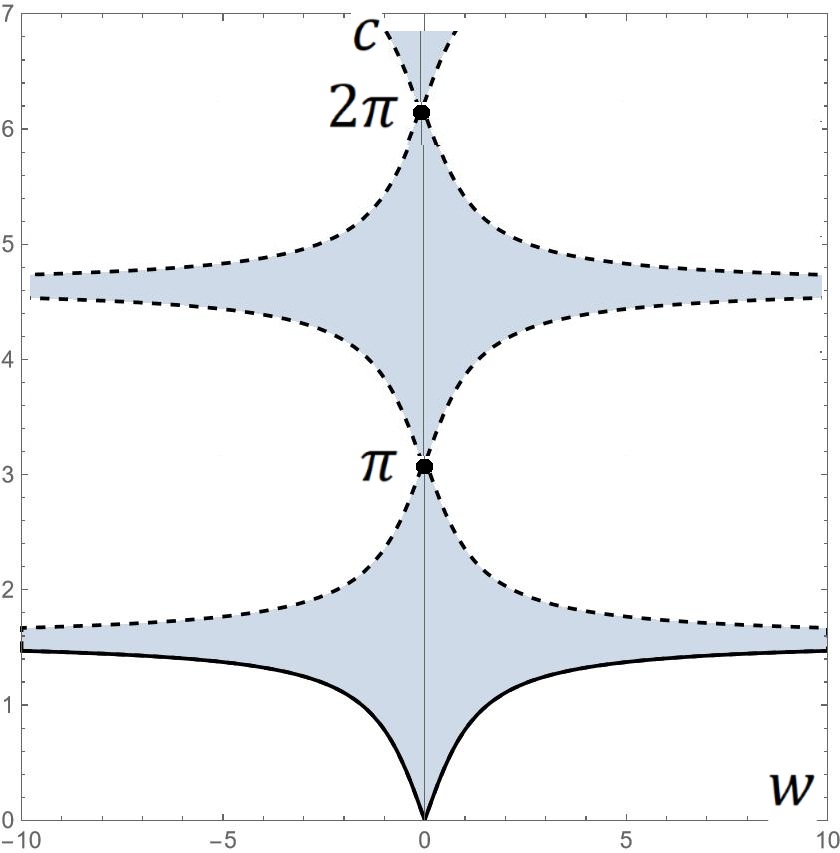}
  \\ \center{(b) The set attainable by geodesics.}
\endminipage\hfill
\minipage{0.32\textwidth}
  \includegraphics[width=\linewidth]{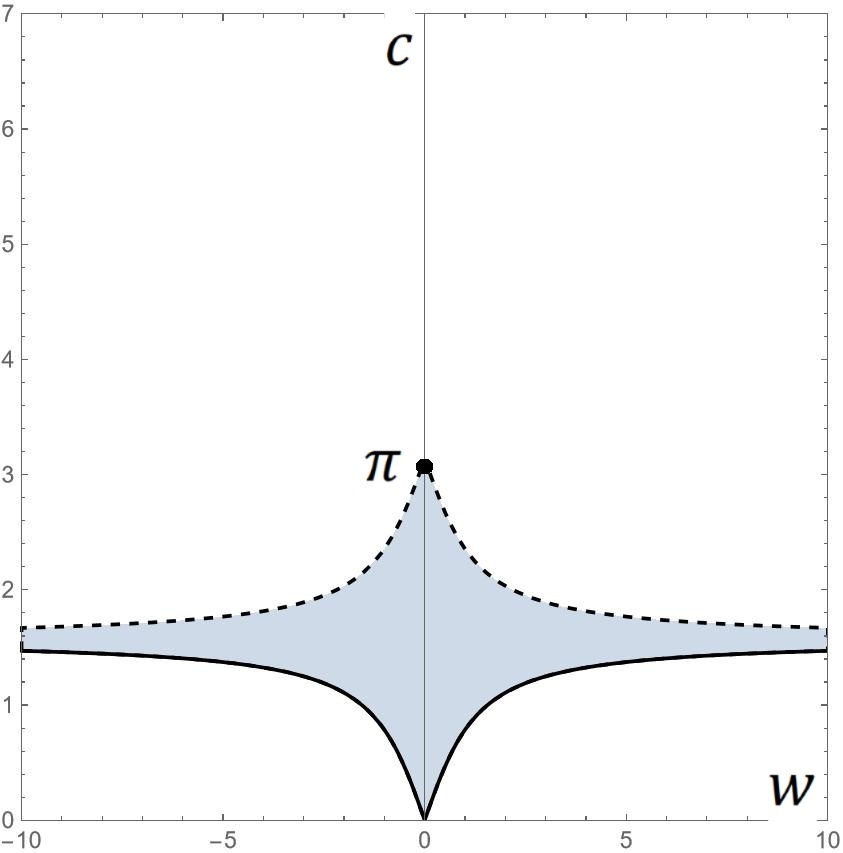}
  \\ \center{(c) The set where the longest arcs exist.}
\endminipage
\caption{\label{pic-symmetric-case-atset}The attainable sets for the symmetric case (the surface of revolution with respect to the $c$-axis). The solid line is $c = \atan{|w|}$, the dashed lines are $c = \pi k \pm \atan{|w|}$, $k \in \N$. The dashed lines do not belong to the sets. Note that in spite of $c$ taking the first position in the pair $(c,w)$ the $c$-axis is vertical.}
\end{figure}

\begin{proposition}
\label{prop-symmetric-case-atset}
Assume that $\eta = 0$.\\
\emph{(1)} Attainable set from the identity point reads as
$$
\A = \{(c,w) \in G \, | \, c \geqslant \atan{|w|}\}.
$$
\emph{(2)} For any point of the set $\A \setminus \A_{extr} \neq \varnothing$ the longest arc coming from the identity point does not exist.\\
\emph{(3)} Moreover, there are no longest arcs coming to the terminal points from the set
$\{(c,w) \, | \, c > \pi - \atan{|w|}\}$.
\end{proposition}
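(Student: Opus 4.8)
I would prove the three items in turn, spending almost all the effort on (3). For item~(1) I would establish the two inclusions separately. To get $\A \supseteq \{(c,w) \mid c \geqslant \atan{|w|}\}$, reach each boundary point $(\atan{|w_0|}, w_0)$ by the light-like trajectories of Lemma~\ref{lem-light-like-boundary}, and then fill the whole ray $\{(c, w_0) \mid c \geqslant \atan{|w_0|}\}$ by Lemma~\ref{lem-att-up}. For the opposite inclusion I would run a barrier argument off Lemma~\ref{lem-inside}: the function $\phi(c,w) = c - \atan{|w|}$ has gradient equal to $\tfrac{1}{1+|w|^2}\,(1+|w|^2,\,-\tfrac{w}{|w|})$, a positive multiple of the inward normal, so Lemma~\ref{lem-inside} forces $\tfrac{d}{dt}\phi(\gamma(t)) \geqslant 0$ along every admissible $\gamma$; since $\phi(\tilde{o}) = 0$ the trajectory never leaves $\{\phi \geqslant 0\}$ (the locus $w=0$, where $\phi$ is not smooth, is harmless as $c$ is strictly increasing). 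Item~(2) is then a one-line consequence of Pontryagin's maximum principle (Theorem~\ref{th-pmp}): a longest arc is an optimal process, hence a geodesic, so its endpoint lies in $\A_{extr}$; contrapositively no point of $\A \setminus \A_{extr}$ carries a longest arc, and this set is nonempty (e.g. the line $c = \pi - \atan{|w|}$, $|w| > 0$, lies in $\A$ but is omitted by Lemma~\ref{lem-atset-extr}).

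Item~(3) I would upgrade to the sharper claim that the Lorentzian distance from $\tilde{o}$ to every $(c,w)$ with $c > \pi - \atan{|w|}$ equals $+\infty$; nonexistence of the longest arc is then immediate, and this simultaneously disposes of the geodesically reachable points (such as $(2\pi,0)$) that item~(2) cannot touch. The driving phenomenon is that timelike curves skirting spatial infinity $|w| \to \infty$ pile up unbounded length over a bounded $c$-window. To make this precise I would maximize the length rate $\sqrt{|Q|}/\dot c$ at a point with $|w| = \rho$: writing an admissible velocity as $(\xi, \omega)$ with $\xi \geqslant |\omega|$ (Lemma~\ref{lem-admissible-velocities}), so that $\sqrt{|Q|} = \sqrt{I_2}\,\sqrt{\xi^2 - |\omega|^2}$, and optimizing the modulus and phase of $\omega$, yields the sharp value $\sqrt{I_2}\,\sqrt{1+\rho^2}$, which diverges with $\rho$. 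The threshold itself has a transparent causal meaning: the light-like curves from $\tilde{o}$ hit $|w| = R$ at $c = \atan{R}$, whereas the light-like curves landing on $(c,w)$ leave $|w| = R$ at $c - \atan{R} + \atan{|w|}$, so the timelike corridor available near $|w| = R$ has $c$-width $c + \atan{|w|} - 2\atan{R}$, which stays bounded below by $c + \atan{|w|} - \pi > 0$ for every $R$ exactly when $c > \pi - \atan{|w|}$.

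The construction then reads, for each large $R$: a light-like arc from $\tilde{o}$ out to $|w| = R$ (length $0$), a timelike cruise near $|w| = R$ over a $c$-window of width at least the fixed constant $c + \atan{|w|} - \pi$, and a light-like arc back in to $(c,w)$ (length $0$), with Lemmas~\ref{lem-round}--\ref{lem-att-up} used to align $\arg{w}$ and to close up the $c$-coordinate exactly. By the rate computation the cruise contributes length $\gtrsim \sqrt{I_2}\,\sqrt{1+R^2}$, so letting $R \to \infty$ produces admissible curves to $(c,w)$ of length tending to $+\infty$, and the supremum is not attained. I expect the main obstacle to be precisely this gluing in item~(3): matching the incoming and outgoing light-like arcs to the exact terminal point $(c,w)$ for each $R$, via the multiplication rule~\eqref{eq-mult}, while certifying that the cruise, which may let $|w|$ drift, genuinely realizes length of order $\sqrt{1+R^2}$. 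The conformal model of anti de-Sitter space makes the corridor geometry evident, but the honest estimate has to be carried out with the explicit velocity formula of Lemma~\ref{lem-admissible-velocities}.
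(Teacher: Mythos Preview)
Your proposal is correct and follows essentially the same route as the paper in all three items. For (1) and (2) the arguments coincide. For (3) the paper likewise upgrades to the claim that the Lorentzian distance is $+\infty$ and uses the same three-arc scheme (light-like out to large radius, timelike middle segment, light-like in to $(c,w)$); its middle arc is taken to be exactly vertical in the $(c,w)$ coordinates, and the supporting Lemma~\ref{lem-symmetric-case-no-solution} records that the coordinate vector $\partial_c$ at $(c_0,w_0)$ is admissible of Lorentzian length $\sqrt{1+|w_0|^2}$ --- precisely your maximal-rate computation. That choice of direction eliminates the drift worry you flag (since $w$ stays fixed along the integral curve), and the gluing issue you anticipate is handled by the paper only informally, via a figure.
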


\begin{proof}
(1) It follows from Lemmas~\ref{lem-light-like-boundary} and \ref{lem-att-up} that the set $\A$ contains in the attainable set.
Moreover, by Lemma~\ref{lem-inside} we can't leave the set $\A$. This implies that the set $\A$ coincides with the attainable set, see Fig.~\ref{pic-symmetric-case-atset}~(a).

(2) The set $\A \setminus \A_{extr}$ is not empty by Lemma~\ref{lem-atset-extr}, see Fig.~\ref{pic-symmetric-case-atset}~(a--b). We claim that for any point of this set there are no longest arcs from the identity point to this point. If by contradiction such an arc exists then by the Pontryagin maximum principle this arc is a part of some extremal trajectory.

(3) It is sufficient to show that the Lorentzian distance from the point $(0,0)$ to the point $(c,w)$ such that $c = \pi - \atan{|w|} + \varepsilon$ is infinite for any $\varepsilon > 0$. To do this we construct the family of admissible curves from the point $(0,0)$ to the point $(c,w)$ with arbitrary big Lorentzian lengths.
Namely, any such curve is a concatenation of three arcs. The first and the third arcs are light-like, while the second one is vertical with the Euclidian length $\varepsilon$,
see Fig.~\ref{pic-longarc}. It remains to show that this vertical arc is admissible and can have arbitrary big Lorentzian length, see Lemma~\ref{lem-symmetric-case-no-solution} below.
\end{proof}

\begin{figure}[h]
  \center{\includegraphics[width=0.32\linewidth]{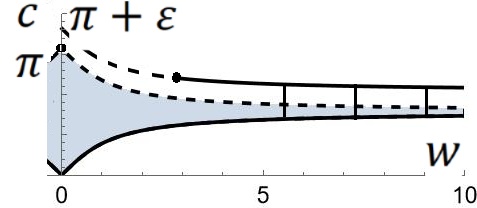}}
\caption{\label{pic-longarc}The sequence of arcs with the Lorentzian length tending to infinity.}
\end{figure}

\begin{lemma}
\label{lem-symmetric-case-no-solution}
Let $(c_0,w_0) \in G$ and $v = (1, 0) \in T_{(c_0,w_0)}G$ be a vertical tangent vector.
Then the vector $v$ is admissible and its Lorentzian length is equal to $\sqrt{1+|w_0|^2}$ and increases to the infinity while $|w_0| \rightarrow +\infty$.
\end{lemma}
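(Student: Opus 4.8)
The plan is to exhibit the unique left-invariant representative of $v$ and read off both its causal character and its norm from Lemma~\ref{lem-admissible-velocities}. Concretely, I would solve for $\xi\in\R$ and $\omega\in\C$ so that the admissible-velocity expression of that lemma at the point $(c_0,w_0)$ equals $(1,0)$. Setting the $w$-component to zero forces
\[
\omega=\frac{i w_0\,\xi\,e^{-ic_0}}{\sqrt{1+|w_0|^2}},
\]
whence $w_0\bar\omega e^{-ic_0}=-i|w_0|^2\xi/\sqrt{1+|w_0|^2}$, so that $\Image(w_0\bar\omega e^{-ic_0})=-|w_0|^2\xi/\sqrt{1+|w_0|^2}$. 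Substituting this into the $c$-component of the admissible-velocity formula makes it collapse to $\xi/(1+|w_0|^2)=1$, so $\xi=1+|w_0|^2$ and $|\omega|^2=|w_0|^2(1+|w_0|^2)$.

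The decisive point is the cancellation
\[
\xi^2-|\omega|^2=(1+|w_0|^2)^2-|w_0|^2(1+|w_0|^2)=1+|w_0|^2 .
\]
Since this is strictly positive, the representative satisfies $\xi>|\omega|\ge 0$; as $\eta=0$, the causal condition of Lemma~\ref{lem-admissible-velocities} reads exactly $\xi\ge|\omega|$, and $\xi=1+|w_0|^2>0$ puts $(\xi,\omega)$ in the same component as the time direction $e_1$. Hence $v$ is admissible, indeed time-like and future-directed. For the norm I would evaluate the Lorentzian form on $(\xi,\omega)$: its square is $I_1\xi^2-I_2|\omega|^2$, which in the symmetric case $I_1=I_2$ becomes $I_1(\xi^2-|\omega|^2)=I_1(1+|w_0|^2)$, i.e.\ proportional to $1+|w_0|^2$. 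A transparent cross-check that pins the constant is to push $v$ forward by $\Pi$: the vertical line $s\mapsto(c_0+s,w_0)$ maps to $s\mapsto(\sqrt{1+|w_0|^2}\,e^{i(c_0+s)},w_0)$, whose velocity $(i\sqrt{1+|w_0|^2}\,e^{ic_0},0)$ has ambient square-norm $-(1+|w_0|^2)$, so the Lorentzian length of $v$ is exactly $\sqrt{1+|w_0|^2}$.

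The final assertion is then immediate, since $\sqrt{1+|w_0|^2}\to+\infty$ as $|w_0|\to+\infty$. There is no genuine obstacle here; the only step requiring care is the inversion of the admissible-velocity formula, and the entire statement rests on the clean cancellation $\xi^2-|\omega|^2=1+|w_0|^2$. This is precisely the algebraic fact that lets a vertical segment of fixed $c$-height have unbounded Lorentzian length along fibres far from the axis, which is what drives the non-existence of longest arcs in Proposition~\ref{prop-symmetric-case-atset}.
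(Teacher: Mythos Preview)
Your proof is correct and follows essentially the same approach as the paper: solve the admissible-velocity formula of Lemma~\ref{lem-admissible-velocities} for $(\xi,\omega)$ with left-hand side $(1,0)$, then read off admissibility and length from the cancellation $\xi^2-|\omega|^2=1+|w_0|^2$. The cross-check via $\Pi$ is a small addition not present in the paper, but the core computation is identical.
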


\begin{proof}
Let us compute the pre-image of the vector $v$ with respect to the left-shift, i.e., $L_{(c_0,w_0) *}^{-1} v = (\xi, \omega) \in T_{(0,0)}G$.
With the help of Lemma~\ref{lem-admissible-velocities} we get
$$
\xi + \frac{\Image{w_0\bar{\omega}e^{-ic_0}}}{\sqrt{1+|w_0|^2}} = 1, \qquad \omega = \frac{iw_0\xi e^{-ic_0}}{\sqrt{1+|w_0|^2}}.
$$
Substituting $\omega$ from the second expression to the first one we obtain
$$
\xi - \frac{\Image{i|w_0|^2\xi}}{1+|w_0|^2} = \xi \left(1 - \frac{|w_0|^2}{1+|w_0|^2} \right) = \frac{\xi}{1+|w_0|^2} = 1.
$$
First, note that the vector $(\xi, \omega)$ is admissible. Indeed,
$$
\xi^2 - |\omega|^2 = (1+|w_0|^2)^2 - |w_0|^2(1+|w_0|^2) = 1+|w_0|^2 > 0.
$$
Thus, the vector $v$ is also admissible.
Second, the Lorentzian length of this vector equals $\sqrt{1+|w_0|^2}$.
\end{proof}

To prove item~(2) of Theorem~\ref{th-A} it remains to show that for any point of the set $\A_{exist}$ there exists the longest arc from the point $(0,0)$.
We will do it in Section~\ref{sec-cc-cut-locus} after studying the diffeomorphic properties of the Lorentzian exponential map, see Proposition~\ref{prop-symmetric-existence}.

\subsection{\label{sec-atset-prolate}The prolate case}

Consider now the case $\eta > 0$. First, we will prove the existence of the longest arcs on the attainable set (see Lemma~\ref{lem-prolate-exist} below). Second, this will allow us to describe the attainable set, since it coincides with the attainable set along extremal trajectories in this case (see Proposition~\ref{prop-atset-prolate}). To prove the existence we use the following result.

We will formulate this result in general situation for a regular (sub-)Lorentzian manifold $M$ equipped also with a Riemannian manifold structure (for instance, if $M=\R^{n+1}$, the standard Euclidean structure suffices). We will denote the lengths of vectors $v \in T_xM$ in this structure as $|v|_R$, and the distance between points $x, y \in M$ as $\dist_R(x, y)$.

\begin{theorem}[see Theorem~1 in~\cite{lokutsievskiy-podobryaev}]
\label{th-lp-exist}
Let $M$ be a regular \emph{(}sub-\emph{)}Lorentzian manifold, and $M$ is a complete Riemannian manifold. Let $x_0 \in M$ be any fixed point and $C^+_x \subset T_xM$ be the set of admissible velocities at a point $x \in M$. Suppose there exists a 1-form $\tau \in \Lambda^1(M)$ such that
\begin{enumerate}
	\item[\emph{(1)}] $\forall\,x\in M$ $\forall\,v \in C^+_x$ we have $\tau_x(v) \geqslant \frac{|v|_R}{1+\dist_R(x_0,x)}$, in particular, $\tau_x(v) > 0$,
	\item[\emph{(2)}] $d\tau = 0$.
\end{enumerate}
Assume that $H^1(M) = 0$. Then there exists the longest arc going from the point $x_0 \in M$ to a point $x_1 \in M$ if and only if there is at least one admissible path from $x_0$ to $x_1$.
\end{theorem}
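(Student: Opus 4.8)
The plan is to treat the statement as a sub-Lorentzian analogue of the Avez--Seifert theorem and to run the direct method of the calculus of variations; the role of the closed $1$-form $\tau$ is to manufacture the compactness that a Lorentzian structure lacks a priori. The only nontrivial implication is ``if'': assuming at least one admissible path from $x_0$ to $x_1$ exists, I must produce a longest one. First, since $d\tau = 0$ and $H^1(M) = 0$, the form $\tau$ is exact, so I fix a smooth function $f$ with $\tau = df$. By condition~(1), along any admissible curve $\gamma\colon[0,T]\to M$ one has $\frac{d}{dt} f(\gamma(t)) = \tau_{\gamma(t)}(\dot\gamma(t)) > 0$, so $f$ is a strictly increasing ``time function''. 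The crucial quantitative step is to combine~(1) with the triangle inequality $\dist_R(x_0,\gamma(t)) \leqslant L(t)$, where $L(t)=\int_0^t |\dot\gamma|_R\,ds$ is the Riemannian arclength. Writing $L'(t)=|\dot\gamma(t)|_R$, condition~(1) gives
\[
\tau_{\gamma(t)}(\dot\gamma(t)) \;\geqslant\; \frac{L'(t)}{1+L(t)} \;=\; \frac{d}{dt}\ln\bigl(1+L(t)\bigr),
\]
and integrating over $[0,T]$ yields $\ln\bigl(1+\ell_R(\gamma)\bigr) \leqslant f(x_1)-f(x_0)$. Hence \emph{every} admissible curve from $x_0$ to $x_1$ has Riemannian length at most $R_0:=e^{f(x_1)-f(x_0)}-1$, a bound depending only on the endpoints.

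With this uniform length bound the rest is soft. By Riemannian completeness and Hopf--Rinow the closed ball $\overline{B}_R(x_0,R_0)$ is compact, and every admissible curve joining $x_0$ to $x_1$ stays inside it. On this compact set the continuous cone field gives $\sqrt{|Q(v)|}\leqslant C\,|v|_R$ for admissible $v$, so the Lorentzian length of any such curve is at most $C R_0$; in particular the Lorentzian distance is finite and the supremum defining it is taken over a nonempty, bounded set. I then pick a maximizing sequence $\gamma_n$, reparametrize each by constant Riemannian speed on $[0,1]$ (Lorentzian length is parametrization-free), so that the $\gamma_n$ are uniformly Lipschitz with image in the fixed compact set. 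Arzel\`a--Ascoli produces a subsequence converging uniformly to a Lipschitz curve $\gamma$, with $\dot\gamma_n \rightharpoonup \dot\gamma$ weakly-$*$ in $L^\infty$.

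It remains to check that $\gamma$ is admissible and as long as the supremum. Admissibility passes to the limit because, for a regular (sub-)Lorentzian structure, the admissible cones $C^+_x$ form a closed field of closed convex cones, so the standard closure argument (Mazur's lemma applied to the weak-$*$ limit) keeps $\dot\gamma(t)\in C^+_{\gamma(t)}$; here one also uses that $\gamma$ is nonconstant, which follows from $f(\gamma(1))-f(\gamma(0))=f(x_1)-f(x_0)>0$. For the length I use that $v\mapsto\sqrt{|Q(v)|}$ is concave on each causal cone (the reversed triangle inequality of Lorentzian geometry, equivalent to superadditivity of the degree-one homogeneous density), whence the length functional is sequentially upper semicontinuous under the above convergence; therefore $\ell(\gamma)\geqslant\limsup_n \ell(\gamma_n)$, which equals the supremum, so $\gamma$ realizes it and is the desired longest arc.

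I expect the main obstacle to be exactly this last closure/upper-semicontinuity step: one must verify in the (sub-)Lorentzian setting that weak-$*$ limits of admissible velocities remain admissible and that the concave length density does not lose mass in the limit---the Lorentzian analogue of lower semicontinuity for convex problems. The regularity hypothesis on $M$ (a closed convex cone field and continuity of $Q$) together with the compactness secured in the first two paragraphs are precisely what make this Filippov/Tonelli-type argument go through; the ``only if'' direction is immediate, since a longest arc is in particular an admissible path.
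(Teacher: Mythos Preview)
The paper does not prove this theorem at all: it is quoted verbatim as Theorem~1 of the external reference~\cite{lokutsievskiy-podobryaev} and then applied as a black box in Lemma~\ref{lem-prolate-exist}. So there is no ``paper's own proof'' to compare against.

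That said, your sketch is the standard direct-method argument one expects for such a statement, and the key ingredients are correctly identified. The exactness $\tau=df$ together with condition~(1) and the estimate $\dist_R(x_0,\gamma(t))\leqslant L(t)$ indeed yields $\tau_{\gamma(t)}(\dot\gamma(t))\geqslant L'(t)/(1+L(t))$, hence the uniform Riemannian length bound $\ell_R(\gamma)\leqslant e^{f(x_1)-f(x_0)}-1$; this is the heart of the matter and you have it right. The compactness via Hopf--Rinow, the Arzel\`a--Ascoli step after reparametrization, the closedness of the convex cone field for admissibility of the limit, and the upper semicontinuity of the Lorentzian length from concavity of $v\mapsto\sqrt{-Q(v)}$ on the causal cone are all the expected steps, and your honest flag on the semicontinuity/closure step is appropriate: that is precisely where the ``regularity'' hypothesis on the (sub-)Lorentzian structure is spent, and making this rigorous (Filippov/Ioffe--Tikhomirov type lemma) is the only place requiring real care. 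Since the present paper merely invokes the result, your outline is more than the paper itself provides.
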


\begin{lemma}
\label{lem-prolate-exist}
Assume that $\eta > 0$. Then any point of the attainable set can be reached by the longest arc from the identity point.
\end{lemma}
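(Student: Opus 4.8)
The plan is to apply Theorem~\ref{th-lp-exist} directly, so the entire task reduces to exhibiting a single closed 1-form $\tau$ on $G = \widetilde{\SU}_{1,1}$ that is uniformly positive on the admissible cone in the sense required by hypothesis~(1). First I would record that $M = G$ is diffeomorphic to $\R^3$ (being the universal cover), so $H^1(G) = 0$ automatically, and I would fix a complete left-invariant Riemannian metric $g_R$ as the reference structure so that $|v|_R$ and $\dist_R$ are well behaved. The natural candidate for $\tau$ is the left-invariant 1-form dual to the time direction, namely $\tau = e^1$, the left-invariant 1-form with $\langle e^1, L_{g*}e_i\rangle = \delta_{1i}$; in the global coordinates $(c,w)$ one expects this to be (up to normalization) essentially $dc$, which is manifestly closed. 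The payoff of working with the universal cover is precisely that this globally defined primitive exists.

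The two things I must then verify are the closedness of $\tau$ and the quantitative lower bound. For closedness, since $\tau$ is left-invariant, $d\tau$ is computed from the structure constants via $d\tau(e_i,e_j) = -\tau([e_i,e_j])$; using the commutator relations~\eqref{eq-commutators}, $[e_2,e_3] = -e_1$ gives $d\tau(e_2,e_3) = 1 \neq 0$, so the bare left-invariant form is \emph{not} closed. This is the first real subtlety: I would instead pass to the exact form $\tau = dc$ in the global coordinate chart, which is trivially closed, and reconcile it with the control data by computing $dc$ against an admissible velocity using Lemma~\ref{lem-admissible-velocities}. From that lemma an admissible velocity at $(c_0,w_0)$ has $c$-component $\xi + \frac{\Image{w_0\bar\omega e^{-ic_0}}}{\sqrt{1+|w_0|^2}}$ with $\xi \geqslant \sqrt{\eta+1}|\omega| > 0$; I would show this expression is bounded below by a positive multiple of $|(\xi,\omega)|$, using $\sqrt{\eta+1} > 1$ in the prolate case to absorb the cross term $\frac{|w_0||\omega|}{\sqrt{1+|w_0|^2}} \leqslant |\omega|$.

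The hard part will be the \emph{uniform} bound in hypothesis~(1): I need $\tau_x(v) \geqslant |v|_R / (1 + \dist_R(x_0,x))$, i.e. the positivity of $dc$ on the admissible cone must not decay faster than $1/\dist_R$ as $x \to \infty$. Here the strict inequality $\eta > 0$ is essential, since it forces the admissible cone to be strictly inside the non-spacelike cone of the symmetric case, giving a margin $dc(v) \geqslant \delta\, |v|_{\text{left}}$ for some $\delta = \delta(\eta) > 0$ with respect to the left-invariant Riemannian norm $|\cdot|_{\text{left}}$. I would then need to control the discrepancy between $|\cdot|_{\text{left}}$ and $|\cdot|_R$, and the growth of $\dist_R$; choosing $g_R$ itself to be the left-invariant metric makes $|\cdot|_R = |\cdot|_{\text{left}}$ and reduces the requirement to $\delta \geqslant 1/(1+\dist_R(x_0,x))$, which holds once $\dist_R(x_0,x) \geqslant 1/\delta - 1$. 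For the bounded region near $x_0$ I would simply rescale $\tau$ by a small positive constant (allowed, since rescaling preserves closedness and positivity) to guarantee the inequality everywhere; alternatively one takes $\tau = \delta\, dc$ from the start. Once hypothesis~(1) and~(2) are established, Theorem~\ref{th-lp-exist} yields the longest arc to every point reachable by an admissible path, which is exactly the attainable set, completing the proof.
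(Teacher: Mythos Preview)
Your approach is correct and parallels the paper's: both apply Theorem~\ref{th-lp-exist} with the closed form $\tau = dc$ and verify hypothesis~(1) via Lemma~\ref{lem-admissible-velocities}. The genuine difference is the choice of reference Riemannian metric. The paper uses the flat Euclidean metric on $\R\times\C$; with that choice $|v|_R$ picks up the factor $\sqrt{1+|w_0|^2}$ from the $w$-component of the admissible velocity, so $|v|_R/dc(v)$ grows linearly in $|w_0|$ and one must invoke the $(1+\dist_R)$ denominator and rescale both $\tau$ and the metric. Your choice of the left-invariant metric is cleaner: since $|v|_{\text{left}}$ is comparable to $\sqrt{\xi^2+|\omega|^2}\leqslant C\xi$ on the admissible cone and, as you note, $dc(v)\geqslant\bigl(1-\tfrac{1}{\sqrt{\eta+1}}\bigr)\xi$ uniformly, one gets $dc(v)\geqslant\delta\,|v|_{\text{left}}$ with $\delta=\delta(\eta)>0$ independent of the base point; the left-invariant metric is complete, so Theorem~\ref{th-lp-exist} applies directly.

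Two small points. First, your rescaling goes the wrong way: to pass from $dc(v)\geqslant\delta|v|_R$ to $\tau(v)\geqslant|v|_R/(1+\dist_R)$ you want $\tau=\delta^{-1}\,dc$, not $\delta\,dc$, and once you do that the inequality holds everywhere (including at $x_0$), so the separate treatment of the ``bounded region near $x_0$'' is unnecessary. Second, your detour through the left-invariant form $e^1$ and the observation $d e^1(e_2,e_3)=-e^1([e_2,e_3])=1\neq0$ is a nice sanity check, but not needed for the argument since you end up using $dc$ anyway.
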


\begin{proof}
Let us apply Theorem~\ref{th-lp-exist}. Our manifold $G \backsimeq \R \times \C$ is simply connected and complete with respect to the standard Euclidian structure $\sqrt{c^2 + |w|^2}$, where $c \in \R$, $w \in \C$. Take $x_0 = (0,0)$ and 1-form $\tau = dc$ which is closed. It remains to verify condition~(1) of Theorem~\ref{th-lp-exist}, namely we need to check that $\frac{|v|_R}{\tau_{(c,w)}(v)} \leqslant 1 + \dist_R{((0,0), (c,w))}$.

From Lemma~\ref{lem-admissible-velocities} we know the admissible velocities $v$ at a point $(c,w) \in G$.
Note that $|\omega| \leqslant \frac{\xi}{\sqrt{\eta+1}}$, since $I_1\xi^2 - I_2|\omega|^2 \geqslant 0$ and $\frac{I_2}{I_1} = \eta + 1 > 0$, $\xi > 0$.
We have
$$
\tau_{(c,w)}(v) = \frac{\xi\sqrt{1+|w|^2} + \Image{w\bar{\omega}e^{-ic}}}{\sqrt{1+|w|^2}}.
$$
Hence, we can estimate
$$
0 < \frac{\xi\left(\sqrt{1+|w|^2} - \frac{|w|}{\sqrt{\eta+1}}\right)}{\sqrt{1+|w|^2}}
\leqslant \tau_{(c,w)}(v) \leqslant
\frac{\xi\left(\sqrt{1+|w|^2} + \frac{|w|}{\sqrt{\eta+1}}\right)}{\sqrt{1+|w|^2}}.
$$
So, $\tau_{(c,w)}(v)$ is positive for any $w$ and any $\omega$ such that $|\omega| \leqslant \frac{\xi}{\sqrt{\eta+1}}$ iff
$\sqrt{1+|w|^2} - \frac{|w|}{\sqrt{\eta+1}} > 0$ for any $w$. The last condition holds iff $\eta \geqslant 0$.

Moreover, $\frac{\sqrt{1+|w|^2} \pm \frac{|w|}{\sqrt{\eta+1}}}{\sqrt{1+|w|^2}} \rightarrow 1 \pm \frac{1}{\sqrt{\eta+1}} > 0$
while $|w| \rightarrow +\infty$ iff $\eta > 0$.
Thus, there exist some constants $C_1, C_2 > 0$ such that  $C_1 < \frac{\sqrt{1+|w|^2} \pm \frac{|w|}{\sqrt{\eta+1}}}{\sqrt{1+|w|^2}} < C_2$.
Hence,
\begin{equation}
\label{eq-estimation}
\frac{|v|_R}{\tau_{(c,w)}(v)} < \frac{\sqrt{C_2^2\xi^2 + |\omega\sqrt{1+|w|^2}e^{ic} - iw\xi|^2}}{C_1\xi} =
\frac{1}{C_1}\sqrt{C_2^2 + \left|\frac{\omega}{\xi}\sqrt{1+|w|^2}e^{ic} - iw\right|^2}.
\end{equation}
Using the following bound
$$
\left|\frac{\omega}{\xi}\sqrt{1+|w|^2}e^{ic} - iw\right| \leqslant \left|\frac{\omega}{\xi}\sqrt{1+|w|^2}e^{ic}\right| + |iw| \leqslant
$$
$$
\leqslant \frac{\sqrt{1+|w|^2}}{\sqrt{\eta+1}} + |w| \leqslant \frac{1+|w|}{\sqrt{\eta+1}} + |w|,
$$
we obtain
$$
\frac{|v|_R}{\tau_{(c,w)}(v)} < \frac{1}{C_1}\sqrt{C_2^2 + \left(\frac{1+|w|}{\sqrt{\eta+1}} + |w|\right)^2} =
\frac{C_2}{C_1}\sqrt{1 + \frac{1}{C_2^2}\left(\frac{1+|w|}{\sqrt{\eta+1}} + |w|\right)^2} \leqslant
$$
$$
\leqslant \frac{C_2}{C_1}\left(1 + \frac{1}{C_2}\left(\frac{1+|w|}{\sqrt{\eta+1}} + |w|\right)\right) = A + B|w|,
$$
for some constants $A, B > 0$. So, we get the following bound
$$
\frac{|v|_R}{A\tau_{(c,w)}(v)} = \frac{\frac{B}{A}|v|_R}{B\tau_{(c,w)}(v)} < 1 + \frac{B}{A} \dist_R{((0,0), (c,w))}.
$$
Whence, the required 1-form is $B\tau$ and the required Riemannian structure is $\frac{B}{A}\dist_R{(\cdot,\cdot)}$. It follows that the conditions of Theorem~\ref{th-lp-exist} are satisfied. This implies that the longest arc from the identity point to any point of the attainable set exists.
\end{proof}

\begin{proposition}
\label{prop-atset-prolate}
Assume that $\eta > 0$.\\
\emph{(1)} The the attainable set reads as
$$
\A = \Bigl\{(c,w) \in G \, \Bigm| \, c \geqslant
\arctan{\Biggl(\frac{\tan{(\tau\sqrt{\eta})} + \frac{1}{\sqrt{\eta}}\tanh{\tau}}{1-\frac{1}{\sqrt{\eta}}\tanh{\tau}\tan{(\tau\sqrt{\eta})}}\Biggr)}, \ \text{where} \
\tau = \arcsinh{\Bigl(|w|\frac{\sqrt{\eta}}{\sqrt{\eta+1}}\Bigr)} \Bigr\}.
$$
\emph{(2)} Any point of $\A$ can be reached from the point $(0,0)$ by the longest arc.
\end{proposition}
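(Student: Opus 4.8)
Item~(2) is immediate from Lemma~\ref{lem-prolate-exist}, which for $\eta>0$ already states that every point of the attainable set is reached from $(0,0)$ by a longest arc. The substance of the proposition is therefore item~(1), the identification of $\A$ with the set $\Omega$ written in the statement. A useful structural observation to organize the argument is that, by Lemma~\ref{lem-prolate-exist} together with the Pontryagin maximum principle (Theorem~\ref{th-pmp}), every longest arc is a geodesic; hence $\A=\A_{extr}$, and it suffices to read off $\A$ from the explicit geodesic equations of Proposition~\ref{prop-circle-geodesics-in-coordinates}.

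First I would pin down the boundary of $\A$ from the light-like geodesics. In the prolate case a light-like covector satisfies $H=0$, i.e. $h_2^2+h_3^2=(\eta+1)h_1^2$, so that $\Kil h=\eta h_1^2>0$; thus the hyperbolic formulas~\eqref{eq-circle-space-geodesic} apply, and the future condition $h_1<0$ forces $\bar{h}_1=-1/\sqrt{\eta}$. Substituting $\tau\eta\bar{h}_1=-\tau\sqrt{\eta}$ into~\eqref{eq-circle-space-geodesic} and forming $\tan c(\tau)=q_1(\tau)/q_0(\tau)$ gives
$$
\tan c(\tau)=\frac{\tan(\tau\sqrt{\eta})+\frac{1}{\sqrt{\eta}}\tanh\tau}{1-\frac{1}{\sqrt{\eta}}\tanh\tau\tan(\tau\sqrt{\eta})},
$$
while $|w(\tau)|=\sinh\tau\,\sqrt{(\eta+1)/\eta}$, because $R_{\tau\eta\bar{h}_1}$ preserves the norm and $\bar{h}_2^2+\bar{h}_3^2=(\eta+1)/\eta$. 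Inverting the latter relation yields $\tau=\arcsinh\bigl(|w|\sqrt{\eta}/\sqrt{\eta+1}\bigr)$, so the light-like geodesics from $(0,0)$ sweep exactly the surface $S=\partial\Omega$. By the rotational symmetry (Lemma~\ref{lem-round}) the whole surface of revolution $S$ lies in $\A$, and Lemma~\ref{lem-att-up} then pushes each of its points upward along the $c$-axis; this proves the inclusion $\Omega\subseteq\A$.

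The main obstacle is the opposite inclusion $\A\subseteq\Omega$, that is, showing that $S$ is a genuine barrier no admissible trajectory can cross downward. I would treat it exactly as in the symmetric case (Lemma~\ref{lem-inside}): compute an inward normal to $S$ at a point $(c_0,w_0)\neq(0,0)$ by differentiating the defining relations $c=\arctan(\dots)$, $\tau=\arcsinh(\dots)$, and then check, using the admissible velocities of Lemma~\ref{lem-admissible-velocities} (subject to $|\omega|\leqslant\xi/\sqrt{\eta+1}$), that the pairing of this normal with every admissible velocity is non-negative, vanishing only along the generating light-like direction. Since $(0,0)\in S$ and $\Omega$ is closed, this forward invariance keeps every admissible curve from $(0,0)$ inside $\Omega$, giving $\A\subseteq\Omega$. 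Alternatively, one may exploit $\A=\A_{extr}$ and verify on~\eqref{eq-circle-time-geodesic}--\eqref{eq-circle-space-geodesic} that each geodesic image lies in $\Omega$; the delicate point then is that the time-like geodesics ($H=-\frac{1}{2}$) may have $\Kil h$ of either sign, so all three parametrizations must be handled separately. In either route the real work is this normal/inequality estimate, together with the careful bookkeeping of the turn-counting variable $c\in[0,+\infty)$ hidden in the $\arctan$.
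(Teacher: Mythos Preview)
Your outline is correct and matches the paper on item~(2) and on the inclusion $\Omega\subseteq\A$ (light-like geodesics sweep $S=\partial\Omega$, then Lemmas~\ref{lem-round} and~\ref{lem-att-up} fill in the rest). The divergence is in the reverse inclusion $\A\subseteq\Omega$.

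You propose to establish it either by a direct normal computation in the spirit of Lemma~\ref{lem-inside}, or by checking case-by-case that every geodesic of Proposition~\ref{prop-circle-geodesics-in-coordinates} stays in $\Omega$; you correctly flag the first as ``the real work'' and the second as requiring all three parametrizations. The paper avoids both computations. It argues abstractly: first $\partial\A\neq\varnothing$, since otherwise Krener's theorem would give $\A=G$, hence a closed admissible loop through $(0,0)$, contradicting Lemma~\ref{lem-prolate-exist}. Then, by the geometric form of the Pontryagin maximum principle, every point of $\partial\A$ is reached by an abnormal extremal, and Proposition~\ref{prop-energy}(1) says the only abnormal geodesics are light-like. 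This forces $\partial\A=S$ without any inequality estimate. Your route would work, but the paper trades the analytic check you anticipate for the already-established classification of abnormals, which is considerably shorter here.
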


\begin{figure}[h]
  \center{\includegraphics[width=0.32\linewidth]{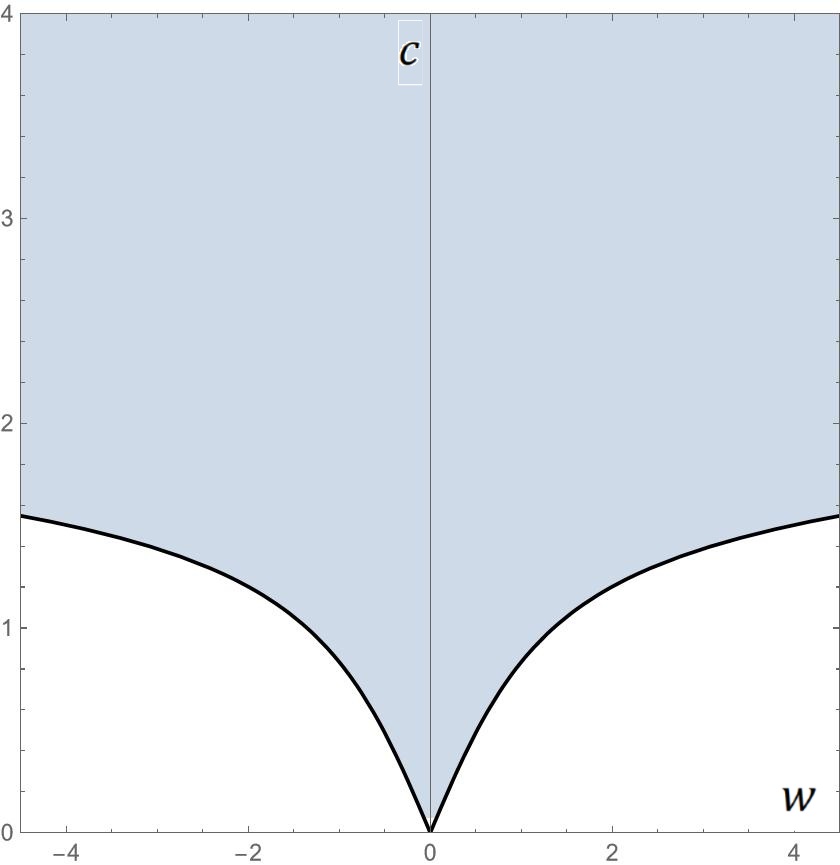}}
\caption{\label{pic-prolate-case-atset}The attainable set for the prolate case where $\eta = 0.1$ (the surface of revolution with respect to the $c$-axis).}
\end{figure}

\begin{proof}
See Fig.~\ref{pic-prolate-case-atset} for the attainable set.

(1) First, notice that the condition in the definition of the set $\A$ means that the attainable set is the set of points that are located upper than the light-like extremal trajectories. Indeed, the light-like extremal trajectories are defined by formulas~\eqref{eq-circle-space-geodesic}, where $\bar{h}_1 = -\frac{1}{\sqrt{\eta}}$ and
$\sqrt{\bar{h}_2^2 + \bar{h}_3^2} = \frac{\sqrt{\eta+1}}{\sqrt{\eta}}$, since for an initial covector of a light-like geodesic in the case $\eta > 0$ we have
$$
\bar{h}_1^2 - \bar{h}_2^2 - \bar{h}_3^2 = -1, \qquad \frac{h_1^2}{I_1} - \frac{h_2^2}{I_2} - \frac{h_3^2}{I_2} = 0.
$$
Second, any point that is located upper than the light-like geodesics belongs to the attainable set due to Lemma~\ref{lem-att-up}.
So, it remains to show that any point of a light-like geodesic is a point of the boundary $\partial\A$ of the attainable set.

Let us show that $\partial\A \neq \varnothing$. Indeed, in the opposite case by Krener's theorem (see, for example, \cite[Th.~8.1]{agrachev-sachkov}) $\A = G$.
This implies that we can attain the point $(-1,0)$, then by Lemma~\ref{lem-att-up} we can attain the point $(0,0)$.
We get a closed admissible path throw the point $(0,0)$. Thus, there is no optimal solution for the point $(0,0)$ in contradiction with Lemma~\ref{lem-prolate-exist}.

Next, if $x \in \partial\A$, then according to the geometrical form of the Pontryagin maximum principle~\cite[Th.~12.1]{agrachev-sachkov} there exists an abnormal extremal trajectory coming to the point $x$. But due to Proposition~\ref{prop-energy}~(1) any abnormal geodesic is light-like. Hence, the set $\partial\A$ coincides with the set swaped by the light-like geodesics.

(2) immediately follows from Lemma~\ref{lem-prolate-exist}.
\end{proof}

Theorem~\ref{th-A}~(3) in the three dimensional case immediately follows from Proposition~\ref{prop-atset-prolate}.

\section{\label{sec-def}Optimality of geodesics}

In this section, we recall some basic definitions and the scheme for investigation of geodesics for optimality.

Any extremal is a trajectory of the Hamiltonian vector field $\vec{H}$ on the cotangent bundle $T^*G$. Hence, an extremal is defined by its initial point, i.e., a point (called \emph{an initial covector}) in the fiber $T^*_{\id}G \backsimeq \g^*$ of the cotangent bundle. This gives rise to the following definition.

\begin{definition}
\label{def-exp}
\emph{The exponential map} is the map
$$
\Exp : \g^* \times \R_+ \rightarrow G, \qquad \Exp(h, t) = \pi \circ e^{t\vec{H}} h, \qquad (h, t) \in \g^* \times \R_+,
$$
where $e^{t\vec{H}}$ is the flow of the Hamiltonian vector field $\vec{H}$ and $\pi : T^*G \rightarrow G$ is the canonical projection.
\end{definition}

So, a geodesic can be written as $\Gamma_{h, t_1} = \{\Exp(h, t) \, | \, t \in [0,t_1]\}$ for some $h \in \g^*$.

\begin{definition}
\label{def-cut}
A time $\tcut(h) \in \R_+ \cup \{+\infty\}$ is called \emph{the cut time for a geodesic with an initial covector} $h$,
if $\Gamma_{h, t_1}$ is optimal for $t_1 \leqslant \tcut(h)$ and is not optimal for $t_1 > \tcut(h)$.
The corresponding point $\Exp(h, \tcut(h))$ is called \emph{a cut point}.
The set of cut points for all geodesics starting at the identity point is called \emph{the cut locus} and denoted by $\Cut$.
\end{definition}

Our goal is to describe the cut locus and to find the cut time.
There are two reasons for a geodesic to lose optimality. The first one is existence of Maxwell points.

\begin{definition}
\label{def-maxwell}
A time $\tmax(h) \in \R_+ \cup \{+\infty\}$ is called \emph{a Maxwell time} if there exist two different geodesics with initial covectors $h$ and $\hat{h}$ meeting one another at the time $\tmax(h) = \tmax(\hat{h})$ with the same Lorentzian lengths. The corresponding point $\Exp(h, \tmax(h)) = \Exp(\hat{h}, \tmax(\hat{h}))$ is called \emph{a Maxwell point}.
\end{definition}

It is well known, that after a Maxwell point an extremal trajectory can not be optimal, see for example~\cite[Prop.~2.1]{sachkov-didona}.
That is why we are interested in the first Maxwell time.

The second reason is existence of conjugate points.

\begin{definition}
\label{def-conj}
A time $t$ is called \emph{a conjugate time along a geodesic with initial covector} $h \in \g^*$, if the point $(h,t)$ is a critical point of the exponential map.
The corresponding point $\Exp(h, t)$ is called \emph{a conjugate point}.
\end{definition}

After a conjugate point a geodesic loses local optimality. That is why we are interested in the first conjugate time $\tconj(h) \in \R_+ \cup \{+\infty\}$ and
the set $\Conj$ of the first conjugate points along all geodesics starting from the identity point (\emph{the first caustic}).

Usually Maxwell points appear for symmetric extremal trajectories. Let us give the formal definition.

\begin{definition}
\label{def-sym}
A pair of diffeomorphisms
$$
s : \g^* \times \R_+ \rightarrow \g^* \times \R_+, \qquad S : G \rightarrow G, \qquad \text{such that} \qquad \Exp \circ s = S \circ \Exp
$$
is called \emph{a symmetry of the exponential map}.
\end{definition}

\begin{proposition}
\label{prop-sym}
The group of symmetries of the exponential map for problem~\emph{\eqref{eq-optimal-control-problem}} contains the group $\SO_2$ that
acts by rotations around the axis $h_1$ in $\g^*$ and keeps the time in the pre-image of the exponential map and
acts by rotations around the axis $c$ in the image of the exponential map.
\end{proposition}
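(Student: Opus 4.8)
The plan is to exhibit the $\SO_2$-action explicitly and then verify the intertwining relation $\Exp \circ s = S \circ \Exp$ of Definition~\ref{def-sym} directly from the geodesic formulas already obtained in Proposition~\ref{prop-circle-geodesics-in-coordinates}. For a fixed angle $\varphi$ I would take $s_\varphi(h,t) = (\rho_\varphi h,\, t)$, where $\rho_\varphi$ rotates the $(h_2,h_3)$-plane of $\g^*$ by $\varphi$ and fixes $h_1$ (so the time coordinate is untouched), and $S_\varphi = R_\varphi$, the automorphism of $G$ that rotates around the $c$-axis, i.e.\ $(c,w)\mapsto(c,e^{i\varphi}w)$, as identified in Lemma~\ref{lem-round}. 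Letting $\varphi$ run over $\R/2\pi\Z$ produces the claimed $\SO_2$-action, and the content of the proposition is exactly that this pair $(s_\varphi,S_\varphi)$ is a symmetry for every $\varphi$.

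Conceptually this is forced. Since $I_2 = I_3$, the Lie-algebra rotation underlying $\rho_\varphi$ is an automorphism of $\g$ preserving the quadratic form $Q = \diag(-I_1,I_2,I_2)$, hence it preserves the reduced Hamiltonian $H$ of Proposition~\ref{prop-energy}~(3), which depends on $(h_2,h_3)$ only through $h_2^2+h_3^2$. The induced inner automorphism $R_\varphi$ of $G$ is therefore an isometry fixing $\id$; its cotangent lift is a symplectomorphism preserving $H$, hence commuting with the Hamiltonian flow $e^{t\vec{H}}$ and acting on the fiber $T^*_{\id}G \simeq \g^*$ by $\rho_\varphi$. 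Projecting the relation $R_\varphi \circ \pi = \pi \circ (\text{lift})$ then yields $\Exp(\rho_\varphi h,\, t) = R_\varphi(\Exp(h,t))$. This is precisely the mechanism already recorded in Remark~\ref{rem-isometry-group}.

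For a self-contained verification I would instead read off the invariance straight from the explicit parametrization. The quantities $|h|$ and $\hone$ are invariant under $\rho_\varphi$, so $\tau = \frac{t|h|}{2I_2}$ is unchanged, and the first two coordinates $q_0,q_1$ in each of \eqref{eq-circle-time-geodesic}, \eqref{eq-circle-light-geodesic}, \eqref{eq-circle-space-geodesic} depend on the initial covector only through $\hone$ and $\tau$; hence $c(\tau) = \arg\{q_0 + iq_1\}$ is invariant. For the $w$-coordinate one has $w(\tau)$ equal to a scalar (depending only on $\tau$ and $\hone$) times $R_{\tau\eta\hone}(\bar h_2,\bar h_3)$; since the planar rotation by $\varphi$ commutes with $R_{\tau\eta\hone}$, replacing $(\bar h_2,\bar h_3)$ by its rotation by $\varphi$ replaces $w(\tau)$ by $e^{i\varphi}w(\tau)$, with the same value of $t$ attained. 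This is exactly $S_\varphi \circ \Exp$, and the computation handles the time-like, light-like and space-like regimes simultaneously.

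The main point requiring care is the matching of the two actions: that the coordinate map $w \mapsto e^{i\varphi}w$ really is the group automorphism $R_\varphi$ of $G$ (supplied by Lemma~\ref{lem-round}, where the rotation about the $c$-axis is obtained from $r_\varphi = \ad(\varphi e_1)$), and, in the conceptual route, that the left-reduced Hamiltonian transforms correctly under the inner automorphism $R_\varphi$ rather than under a left translation; this is where the hypothesis $I_2 = I_3$ is indispensable. The remaining steps are the routine commutation of the planar rotation by $\varphi$ with $R_{\tau\eta\hone}$ and the observation that $\rho_\varphi$ preserves $|h|$, $\hone$, and hence $\tau$.
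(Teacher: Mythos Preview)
Your proposal is correct. The paper's own proof is a single sentence: it observes that the rotation in the $(e_2,e_3)$-plane is an automorphism of $\g$ and then cites an external general result (\cite[Th.~1]{podobryaev-symmetries}) to conclude that such an automorphism induces a symmetry of the exponential map. Your conceptual route is exactly this argument unpacked---the cotangent lift of the isometry $R_\varphi$ preserves $H$ and commutes with $e^{t\vec{H}}$---so on that front you and the paper agree. Your computational verification via the explicit formulas \eqref{eq-circle-time-geodesic}--\eqref{eq-circle-space-geodesic} is a genuinely different, more elementary route: it is fully self-contained (no appeal to the cited theorem or to symplectic lifts) and makes the action on both sides transparent, at the cost of relying on the parametrization already worked out in Proposition~\ref{prop-circle-geodesics-in-coordinates} rather than on structural principles. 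Either argument suffices; the paper opts for brevity by outsourcing the mechanism, while your write-up would make the proposition independent of the external reference.
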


\begin{proof}
The dual map for any such a rotation is an automorphism of the Lie algebra $\g$. Thus, it induces a symmetry of the exponential map, see~\cite[Th.~1]{podobryaev-symmetries}.
\end{proof}

We will see below in Section~\ref{def-cut} that these symmetries are enough to obtain the cut locus. More precisely, following~\cite{sachkov-didona} we prove that the exponential map restricted to the open set bounded by the first Maxwell time corresponding to these symmetries is a diffeomorphism on to the open set $\A \setminus \M$,
where $\M$ is the set of corresponding first Maxwell points. Hence, for any point of the set $\A \setminus \M$ there is unique geodesic coming to this point.
If we know that the optimal solutions on the attainable set $\A$ exist, then this unique geodesic is optimal.
This will imply that $\M$ is the cut locus.

\section{\label{sec-conj-time}The first conjugate time}

Note that formulae~\eqref{eq-circle-time-geodesic}, \eqref{eq-circle-space-geodesic} for geodesics are quite similar up to some change of trigonometric functions to hyperbolic ones. Several necessary computations can be made in the same way. To avoid such a duplication let us introduce the following functions:
$$
c(\tau, h) = \left\{
\begin{array}{lll}
\cos{\tau}, & \text{if} & \Kil{h} < 0,\\
\cosh{\tau}, & \text{if} & \Kil{h} > 0,\\
\end{array}
\right.
\qquad
s(\tau, h) = \left\{
\begin{array}{lll}
\sin{\tau}, & \text{if} & \Kil{h} < 0,\\
\sinh{\tau}, & \text{if} & \Kil{h} > 0,\\
\end{array}
\right.
\qquad
\typeh = \sgn{\Kil{h}}.
$$
Note that the following equalities are satisfied:
$$
c^2(\tau, h) - \typeh s^2(\tau, h) = 1, \qquad c'(\tau, h) = \typeh s(\tau, h), \qquad s'(\tau, h) = c(\tau, h).
$$

\begin{theorem}
\label{th-cc-conj}
The first conjugate time has the following properties.\\
\emph{(1)} If $\eta < 0$, then $\tconj(h) \in (\frac{\pi I_2}{|h|}, \frac{2\pi I_2}{|h|}]$.\\
\emph{(2)} If $\eta = 0$, then
$$
\tconj(h) = \left\{
\begin{array}{lll}
\frac{2\pi I_2}{|h|}, & \text{if} & |h| \neq 0,\\
+\infty, & \text{if} & |h| = 0.\\
\end{array}
\right.
$$
\emph{(3)} If $\eta > 0$, then
$$
\tconj(h) = \left\{
\begin{array}{lll}
\frac{2\pi I_2}{|h|}, & \text{if} & \Kil{(h)} < 0,\\
+\infty, & \text{if} & \Kil{(h)} \geqslant 0.\\
\end{array}
\right.
$$
\end{theorem}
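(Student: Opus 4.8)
The plan is to compute the Jacobian determinant of the exponential map directly from the explicit geodesic formulas of Proposition~\ref{prop-circle-geodesics-in-coordinates} and to locate its first positive zero, using the unified notation $c(\tau,h)$, $s(\tau,h)$, $\typeh$ so that the cases $\Kil h<0$ and $\Kil h>0$ are treated at once. Two symmetries reduce the problem to a map between equidimensional spaces. First, the homogeneity of $H$ together with the product formula of Proposition~\ref{prop-extremal-trajectories} gives $\Exp(\lambda h, t/\lambda)=\Exp(h,t)$, so $\Exp$ is constant along the dilation orbits and its image depends only on the normalized covector $\bar h$ and on $\tau=\frac{t|h|}{2I_2}$; hence a time $t$ is conjugate (Definition~\ref{def-conj}) precisely when the restriction of $\Exp$ to the level set $\{\,|h|=\const\,\}\times\R_+$ is singular at $(h,t)$. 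Second, by Proposition~\ref{prop-sym} the map commutes with the rotations about the $h_1$-axis; I would use this to introduce on the level set $\Kil(\bar h)=\pm1$ the latitude $\rho$ (so that $\hone=\cosh\rho$ when $\Kil h<0$ and $\hone=\sinh\rho$ when $\Kil h>0$) and the longitude $\phi$, taking $(\tau,\rho,\phi)$ as coordinates in the preimage and $(c,\Real w,\Image w)$ in the image.

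Next I would write out the Jacobian $J=\partial(c,\Real w,\Image w)/\partial(\tau,\rho,\phi)$. Since $\phi$ enters only through the overall rotation $R_\phi$ acting on $(q_2,q_3)$, the determinant factors as the product of an \emph{angular factor} and a \emph{reduced planar factor}, the latter being the $2\times2$ Jacobian of the rotation-reduced map $(\tau,\rho)\mapsto(c,|w|)$. The angular factor is proportional to $s(\tau,h)$, i.e.\ to $|w|$, and measures the collapse of the rotational orbit onto the axis $w=0$. For $\Kil h<0$ this factor $\propto\st$ first vanishes at $\tau=\pi$, which already gives the universal upper bound $\tconj(h)\leqslant\frac{2\pi I_2}{|h|}$ for elliptic covectors. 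For $\Kil h>0$ one has $s(\tau,h)=\sht>0$ for every $\tau>0$, so the angular factor never vanishes; the light-like covectors $\Kil h=0$ fall outside the $\tau$-parametrization and must be handled directly from \eqref{eq-circle-light-geodesic}.

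It then remains to analyze the reduced planar factor, which depends essentially on $\eta$. The plan is to show that for $\eta\geqslant0$ and $\Kil h<0$ this factor has no zero on the open interval $\tau\in(0,\pi)$, so that the first conjugate time is produced exactly by the angular collapse at $\tau=\pi$, i.e.\ $\tconj(h)=\frac{2\pi I_2}{|h|}$; for $\Kil h\geqslant0$ and $\eta\geqslant0$ I would check that both factors stay nonzero, yielding $\tconj(h)=+\infty$. This settles cases~(2) and~(3). For $\eta<0$ the extra twist $R_{\tau\eta\hone}$ makes the reduced planar factor a genuinely transcendental function of $\tau$ that can vanish before $\tau=\pi$; here I would (i) prove it is strictly positive on $(0,\frac{\pi}{2}]$, giving the lower bound $\tconj(h)>\frac{\pi I_2}{|h|}$, and (ii) combine this with the angular upper bound to obtain $\tconj(h)\in(\frac{\pi I_2}{|h|},\frac{2\pi I_2}{|h|}]$, which is case~(1).

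The main obstacle is the oblate case: the reduced planar factor is a combination of $\ct,\st$ with $\ce,\se$ at the incommensurate frequency $\eta\hone$, so its first zero cannot be written in closed form and must instead be bounded. I would establish its sign on $(0,\frac{\pi}{2}]$ by grouping terms so that each group keeps a definite sign there, using $\ct\geqslant0$ on this interval together with monotonicity of the relevant trigonometric expressions; this is where the delicate estimates enter, and it is enough to pin down the lower bound, the behaviour on $(\frac{\pi}{2},\pi)$ being needed only to confirm that any zero stays below the angular value $\pi$. A secondary technical point is to treat the light-like covectors as the limit $\Kil h\to0$ of \eqref{eq-circle-light-geodesic} and to verify that the Jacobian remains nonzero in that limit, giving $\tconj=+\infty$ for $|h|=0$.
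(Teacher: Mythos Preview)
Your approach is essentially the paper's --- both exploit the $\SO_2$-symmetry to split the Jacobian into an angular factor and a reduced planar factor and then locate the first positive zero of each --- but two execution choices in the paper make the analysis markedly simpler than you anticipate.

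First, instead of the target coordinates $(c,|w|)$ (which drag in derivatives of $\arg$) and a latitude $\rho$ (which is singular at the axial geodesic $\rho=0$), the paper keeps the linear outputs $(q_0,q_1,q_2,q_3)$ and treats $\hone$ itself as a free variable. The block structure of the resulting $4\times4$ Jacobian then gives directly
\[
J=s^3(\tau,h)\Bigl[-\tau\eta\bigl(\typeh+\hone^2\bigr)\,c(\tau,h)-\bigl(\typeh-\eta\hone^2\bigr)\,s(\tau,h)\Bigr],
\]
so the ``reduced planar factor'' collapses to a single bracket that is linear in $c(\tau,h)$ and $s(\tau,h)$. Finding its first zero amounts to comparing the straight line $\tau\mapsto\tau\,\eta\,\frac{\typeh+\hone^2}{\typeh-\eta\hone^2}$ with $-\tan\tau$ (elliptic case) or $-\tanh\tau$ (hyperbolic case). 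In particular your ``main obstacle'' in the oblate case dissolves: for $\eta<0$ the slope of this line is nonnegative, so its first intersection with $-\tan\tau$ lies in $(\tfrac{\pi}{2},\pi]$ by inspection --- no grouping of terms or sign estimates is required.

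Second, for the boundary case $\Kil h=0$ the paper does not argue by a limit. Your coordinates $(\tau,\bar h)$ degenerate there, and mere continuity of the Jacobian only yields $J\geqslant0$ in the limit, not the strict positivity you need. Instead the paper uses the homotopy invariance of the Maslov index: it connects the null-Killing covector to a nearby elliptic one along a path on which the endpoint $t_1$ is never conjugate, and reads off that the number of conjugate points on $[0,t_1]$ is the same (namely zero) at both ends. A direct computation from \eqref{eq-circle-light-geodesic} in the nonsingular variables $(t,h_1,h_2,h_3)$ can be made to work, but is more laborious for $\eta>0$ than your sketch suggests.
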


\begin{proof}
A point $(h, t) \in \g^* \times \R_+$ is a critical point of the exponential map iff
it is a critical point of the composition $\Pi \circ \Exp : \g^* \times \R_+ \rightarrow \SU_{1,1}$ of the exponential map and the projection $\Pi$ to the group $\SU_{1,1}$. This composition is defined by formulas~\eqref{eq-circle-time-geodesic}--\eqref{eq-circle-space-geodesic}.
Moreover, the Jacobi matrix of the map $\Pi \circ \Exp$ is equal to the product of two matrices:
$$
\frac{\partial{(q_0,q_1,q_2,q_3)}}{\partial{(\tau,\bar{h}_1,\bar{h}_2,\bar{h}_3)}} \cdot
\frac{\partial{(\tau,\bar{h}_1,\bar{h}_2,\bar{h}_3)}}{\partial{(t,h_1,h_2,h_3)}},
$$
where the second multiplier is non degenerate. So, we need to find zeros of the Jacobian $J$ of the first one.

Let us compute the partial derivatives:
$$
\begin{array}{lll}
\partial q^e_0 / \partial \tau & = & (\typeh - \eta\hone^2) s(\tau, h) \cos{(\tau\eta\hone)} - \hone(1+\eta) c(\tau, h) \sin{(\tau\eta\hone)},\\
\partial q^e_0 / \partial \hone & =  & -\tau \eta c(\tau, h)\sin{(\tau\eta\hone)} -s(\tau, h)\sin{(\tau\eta\hone)} -\tau\eta\hone s(\tau, h) \cos{(\tau\eta\hone)},\\
\partial q^e_1 / \partial \tau & = & (-\typeh+\eta\hone^2)s(\tau,h)\sin{(\tau\eta\hone)} - \hone(1+\eta)c(\tau, h)\cos{(\tau\eta\hone)},\\
\partial q^e_1 / \partial \hone & = & -\tau\eta c(\tau, h)\cos{(\tau\eta\hone)} -s(\tau, h)\cos{(\tau\eta\hone)} + \tau\eta\hone s(\tau, h)\sin{(\tau\eta\hone)}.\\
\end{array}
$$
It is easy to see that
$$
J =
s^2(\tau, h) \left(\frac{\partial q^e_0}{\partial \tau}\frac{\partial q^e_1}{\partial \hone} - \frac{\partial q^e_0}{\partial \hone}\frac{\partial q^e_1}{\partial \tau}\right) =
s^3(\tau, h) \Bigl(-\tau\eta(\typeh+\hone^2)c(\tau, h) - (\typeh - \eta\hone^2) s(\tau, h)\Bigr).
$$
Let us estimate the first positive root of the expression in the brackets. Assume that $s(\tau, h) \neq 0$. Then $c(\tau, h) \neq 0$, since otherwise $s(\tau, h) = 0$.
Dividing the expression in the brackets by $c(\tau, h)$ we get that the first positive root of $J$ coincides with the first positive root of the equation
\begin{equation}
\label{eq-conj}
\tau\eta\frac{\typeh + \hone^2}{\typeh - \eta\hone^2} = - \frac{s(\tau, h)}{c(\tau, h)}.
\end{equation}

If $\eta < 0$, then $\typeh = -1$ and $\hone \in \left(-\frac{1}{\sqrt{-\eta}}, -1\right]$.
Indeed, for an initial covector $h$ such that $H(h) = 0$ we have
$$
\hone^2 - \bar{h}_2^2 - \bar{h}_3^2 = 1, \quad \frac{\hone^2}{I_1} - \frac{\bar{h}_2^2 + \bar{h}_3^2}{I_2} = 0, \quad \hone < 0 \qquad \Rightarrow \qquad
\hone = -\frac{1}{\sqrt{-\eta}}.
$$
Since $-1 < \eta < 0$, it follows, that $-1 - \eta\hone^2 < 0$. Also we have $-1 + \hone^2 \geqslant 0$ and the coefficient of $\tau$ in equation~\eqref{eq-conj} is not negative.
The right side of this equation is $-\tan{\tau}$.
It means that the first positive root of equation~\eqref{eq-conj} is located on the interval $\left(\frac{\pi}{2}, \pi\right]$.

If $\eta = 0$, then obviously the first positive root of equation~\eqref{eq-conj} is $\pi$.

To prove statements~(1)--(2) it remains to note that in this case the first multiplier of $J$ is $s(\tau, h) = \sin{\tau}$ with the first positive root equal to $\pi$ and use the definition of $\tau$.

Now let us consider the case $\eta > 0$.

If $\typeh = -1$, then $-1 - \eta \hone^2 < 0$ and $-1 + \hone^2 \geqslant 0$. So, the coefficient of $\tau$ in equation~\eqref{eq-conj} is not positive.
Moreover, it is easy to see that this coefficient $\eta\frac{-1+\hone^2}{-1 - \eta\hone^2} > -1$.
This means that the first positive root of this equation is located on the interval $\left[\pi, \frac{3\pi}{2}\right)$. Note that in this case the first multiplier of $J$ that is $s(\tau, h) = \sin{\tau}$ has the first positive root $\pi$.

If $\typeh = 1$, then $1 + \hone^2 > 0$ and if $1 - \eta \hone^2 > 0$ we get that the coefficient of $\tau$ in equation~\eqref{eq-conj} is positive.
Since the right side of this equation is $-\tanh{\tau}$, then this equation has no positive roots.
If $1 - \eta \hone^2 < 0$, then since $\eta > -1$ it follows that the coefficient $\eta\frac{1+\hone^2}{1 - \eta\hone^2} < -1$.
Since $\frac{d}{d\tau}|_{\tau = 0}(-\tanh{\tau}) = -1$, equation~\eqref{eq-conj} has no positive roots.
The first multiplier of $J$ is $s(\tau, h) = \sinh{\tau}$ and it has no positive roots.

To complete the proof of statements~(2)--(3) it remains to show that if $\Kil{h} = 0$, then $\tconj(h) = +\infty$.
We will show that there are no conjugate points at a geodesic with such an initial covector.
By contradiction assume that there is a finite conjugate time $\tconj(h) < +\infty$.
Since the conjugate points on the geodesic are isolated~\cite{agrachev}, there exists $t_1 > \tconj(h)$ such that $t_1$ is not a conjugate time for this geodesic.
Consider a continuous curve $k : [0, 1] \rightarrow \g^*$ such that $k(0) = h$ and $\Kil{k(s)} < 0$ and $|k(s)| < \frac{2\pi I_2}{t_1}$ for $s \in (0, 1]$.
The number of conjugate points (taking into account multiplicity) on the geodesic arc
$\{q^s(t) = \Exp{(k(s), t)} \, | \, t \in [0, t_1]\}$ is equal to the Maslov index~\cite{arnold-index-maslova} of the path $l^s(t) = e^{-t \vec{H}}_* T^*_{q^s(t)}G$ in the Grassmanian of Lagrangian subspaces of $T_{(\id, 0)} T^*G$, see~\cite{agrachev}. Due to homotopic invariance of the Maslov index, the number of conjugate points on the geodesic arcs $\{q^0(t) \, | \, t \in [0, t_1]\}$ and $\{q^1(t) \, | \, t \in [0, t_1]\}$ are equal.
There are no conjugate points on the second arc, since the first conjugate time on this arc is $\frac{2\pi I_2}{|k(1)|} > t_1$. Thus, there are no conjugate points on the first one. We get a contradiction.
\end{proof}

\begin{remark}
\label{rem-conj-time-infinitely-small}
It follows from Theorem~\ref{th-cc-conj} that the conjugate time can be infinitely small in the case $\eta < 0$.
Indeed, note that for $h$ such that $H(h) = -\frac{1}{2}$ and $\Kil{h} < 0$ we have
$|h| = \frac{\sqrt{I_2}}{\sqrt{1 + \eta \hone^2}}$.
This easily follows from the substitution of $\bar{h}_2^2 + \bar{h}_3^2 = \hone^2 - 1$ to the condition:
$$
H(h) = -\frac{1}{2} |h|^2 \left( \frac{\hone^2}{I_1} - \frac{\bar{h}_2^2 + \bar{h}_3^2}{I_2} \right) = -\frac{1}{2} \qquad \Rightarrow \qquad
|h|^2 (\hone^2(1+\eta) - (\bar{h}_2^2 + \bar{h}_3^2)) = I_2.
$$
Moreover, in this case $\hone \in (-\frac{1}{\sqrt{-\eta}}, -1]$.
It follows $|h| \rightarrow +\infty$ while $\hone \rightarrow -\frac{1}{\sqrt{-\eta}}+0$.
Hence, due to Theorem~\ref{th-cc-conj}~(1) we obtain $\tconj(h) \rightarrow 0+0$.
\end{remark}

\section{\label{sec-maxwell-time}The first Maxwell time for symmetries}

Let us describe the first Maxwell points that correspond to the rotation symmetry.

\begin{proposition}
\label{prop-cc-maxwell}
\emph{(1)} If $\eta < 0$, then any point of the set $\M = \{(c, 0) \, | \, c \in (\pi(1-\sqrt{-\eta}), \pi(1+\eta)]\}$ is a Maxwell point,
where infinitely many time-like geodesics meet one another with the corresponding first Maxwell time $\tmax(h) = \frac{2 \pi I_2}{|h|}$.
These geodesics have initial covectors $h$ such that $\hone = \frac{\pi - c}{\pi\eta}$.\\
\emph{(2)} If $\eta = 0$, then the point $\M = \{(\pi, 0)\}$ is a Maxwell point where all of the time-like geodesics meet one another with the corresponding first Maxwell time $\tmax(h) = \frac{2 \pi I_2}{|h|}$.\\
\emph{(3)} If $\eta > 0$, then any point of the set $\M = \{(c, 0) \, | \, c \in [\pi(1+\eta), +\infty)\}$ is a Maxwell point,
where infinitely many time-like geodesics meet one another.
These geodesics have initial covectors $h$ such that $\hone = \frac{\pi - c}{\pi\eta}$.
The corresponding first Maxwell time equals
$$
\tmax(h) = \left\{
\begin{array}{lll}
\frac{2 \pi I_2}{|h|}, & \text{if} & \Kil{h} < 0,\\
+\infty, & \text{if} & \Kil{h} \geqslant 0.\\
\end{array}
\right.
$$
\end{proposition}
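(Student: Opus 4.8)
The plan is to realize every Maxwell point of this kind through the rotation symmetry of Proposition~\ref{prop-sym}. That $\SO_2$ acts on the image by rotation about the $c$-axis, so two rotated geodesics $\Exp(h,\cdot)$ and $\Exp(R_\alpha h,\cdot)=R_\alpha\,\Exp(h,\cdot)$ coincide at a time $t$ precisely when $\Exp(h,t)$ is fixed by $R_\alpha$; for $\alpha\not\equiv 0$ the fixed-point set of the rotation is exactly the $c$-axis $\{w=0\}$. Hence a Maxwell point of this type is a point at which the geodesic meets the $c$-axis, and it is a \emph{genuine} Maxwell point only when the initial covector is actually moved by the rotation, i.e. when $(\bar h_2,\bar h_3)\neq 0$. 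So the first task is to find the first positive time at which $w(\tau)=0$.

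The second step reads $|w(\tau)|$ off Proposition~\ref{prop-circle-geodesics-in-coordinates}. In the time-like case $\Kil h<0$, formula~\eqref{eq-circle-time-geodesic} gives $|w(\tau)|=|\st|\sqrt{\bar h_2^2+\bar h_3^2}$, whose first positive zero is $\tau=\pi$, i.e. $t=2\pi I_2/|h|$; this is the claimed $\tmax(h)$. When $\Kil h=0$ and $\Kil h>0$, formulas~\eqref{eq-circle-light-geodesic} and \eqref{eq-circle-space-geodesic} make $|w|$ proportional to $t$ and to $\sht$ respectively, and these never vanish for $t>0$, so no rotational Maxwell point occurs and $\tmax=+\infty$. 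This already produces the dichotomy for $\tmax(h)$ in case $\eta>0$.

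The third step locates the Maxwell point $(c,0)$ at $\tau=\pi$ in the time-like case. Rewriting the first two lines of~\eqref{eq-circle-time-geodesic} as $q_0+iq_1=e^{-i\tau\eta\hone}(\ct-i\hone\st)$, the continuous argument splits as $c(\tau)=-\tau\eta\hone+\arg(\ct-i\hone\st)$; since $\hone\leqslant -1<0$ the curve $\ct-i\hone\st$ stays in the upper half-plane on $(0,\pi)$ and contributes exactly $\pi$ at $\tau=\pi$, giving $c(\pi)=\pi(1-\eta\hone)$, equivalently $\hone=(\pi-c)/(\pi\eta)$. The admissible range of $\hone$ then follows from $H=-\tfrac12$: using $h_2^2+h_3^2=(\eta+1)h_1^2-I_2$ one gets $\Kil h=\eta h_1^2-I_2$, and after normalizing by $|h|$ this yields $\hone\in(-1/\sqrt{-\eta},-1]$ for $\eta<0$ (so $c$ sweeps $(\pi(1-\sqrt{-\eta}),\pi(1+\eta)]$), $c\equiv\pi$ for $\eta=0$, and $\hone\in(-\infty,-1]$ with $\Kil h<0$ for $\eta>0$ (so $c$ sweeps $[\pi(1+\eta),+\infty)$).

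The main obstacle is the boundary value $\hone=-1$, i.e. the endpoint $c=\pi(1+\eta)$. There $(\bar h_2,\bar h_3)=0$, the geodesic runs along the $c$-axis, the rotation acts trivially, and it produces no two distinct geodesics, yet the statement lists this endpoint as a Maxwell point. I would justify keeping it closed by continuity together with a conjugate-point check: evaluating the Jacobian $J$ from the proof of Theorem~\ref{th-cc-conj} at $\hone=-1$, $\typeh=-1$ gives $J=(1+\eta)\st^4$, whose first zero is $\tau=\pi$, so the axial geodesic has its first conjugate point exactly at $(\pi(1+\eta),0)$; this is the limit of the genuine Maxwell points $(\pi(1-\eta\hone),0)$ as $\hone\to-1$ and is consistent with the cut locus of Theorem~\ref{th-B}. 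The remaining care is bookkeeping: confirming that the winding count in $c(\tau)$ introduces no spurious multiple of $2\pi$, and that no pair $R_\alpha h$, $R_\beta h$ can meet before $\tau=\pi$, which again reduces to the condition $w=0$.
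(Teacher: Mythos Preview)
Your argument is essentially the paper's own: read off from \eqref{eq-circle-time-geodesic} that the first positive zero of $w(\tau)$ is $\tau=\pi$, compute $c(\pi)=\pi(1-\eta\hone)$, and then translate the admissible range of $\hone$ into the stated interval for $c$. You actually go further than the paper in flagging the boundary value $\hone=-1$, where $(\bar h_2,\bar h_3)=0$ and the rotation symmetry acts trivially, so the endpoint $c=\pi(1+\eta)$ is really a first conjugate point rather than a Maxwell point in the strict sense of two distinct geodesics meeting; the paper's short proof leaves this implicit.
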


\begin{proof}
Immediately follows from formulae~\eqref{eq-circle-time-geodesic}. Indeed, the first time when $q_2 = q_3 = 0$ is $\tau = \pi$.
The corresponding value $c$ can be obtained as
$$
c = \arg{\{q_0^e(\pi) + iq_1^e(\pi)\}} = \arg{\{-\cos{(\pi\eta\hone)} + i\sin{(\pi\eta\hone)}\}} = \pi - \pi\eta\hone.
$$
It remains to use the definition of $\tau$ and the inclusions $\hone \in (-\frac{1}{\sqrt{-\eta}}, -1]$ for $\eta < 0$ and $\hone \in (-\infty, -1]$ for $\eta \geqslant 0$.
\end{proof}

\begin{remark}
\label{rem-cc-conj-maxwell}
Note that for $\eta \geqslant 0$ the first Maxwell time coincides with the first conjugate time and the Maxwell stratum coincides with the first caustic $\M = \Conj$.
But for $\eta < 0$ the first conjugate time is less than the first Maxwell time.
This phenomena does not appear in Riemannian geometry, where the first conjugate point appear earlier than the first Maxwell point only on geodesics that comes to the boundary of the cut locus.
\end{remark}

\section{\label{sec-cc-cut-locus}The cut locus}

In this section, we describe the cut locus for $\eta \geqslant 0$.
Note that in the case $\eta < 0$ the longest arcs do not exist by Proposition~\ref{prop-full-control}, thus the cut locus is trivial in this case.

Define the following sets.
First, the set of pair of initial covectors of time-like geodesics and times that are less than the first Maxwell time
$$
U = \left\{ (h, t) \in \g^* \times \R_+ \, \Bigm| \, H(h) = -{{1}\over{2}}, \, 0 < t < \tmax(h)\right\}.
$$
Second, the subset $V = \Exp{U} \subset G$.
It follows from the results of Section~\ref{sec-atset-symmetric} that in the case $\eta = 0$
$$
V = \{(c,w) \in G \, | \, \arctan{|w|} < c < \pi - \arctan{|w|}\},
$$
while in the case $\eta > 0$
$$
V = \A \setminus (\partial\A \cup \M),
$$
since in this case the attainable set coincides with the attainable set along extremal trajectories, see~Proposition~\ref{prop-atset-prolate}.

\begin{proposition}
\label{prop-diffeomorphism}
The map $\Exp : U \rightarrow V$ is a diffeomorphism.
\end{proposition}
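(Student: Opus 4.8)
The plan is to verify the hypotheses of the Hadamard global diffeomorphism theorem: a local diffeomorphism $F\colon M \to N$ between connected smooth manifolds of the same dimension is a global diffeomorphism provided $F$ is proper and $N$ is simply connected. I would take $M = U$, $N = V$ and $F = \Exp|_U$. Surjectivity is built into the definition $V = \Exp{U}$, so the real content is to show that $\Exp|_U$ is a \emph{proper local diffeomorphism onto a simply connected set}.

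First I would check that $\Exp|_U$ is a local diffeomorphism. By Definition~\ref{def-conj} the critical points of $\Exp$ are exactly the conjugate points, so it suffices that $U$ contains none. For $\eta \geqslant 0$, comparing Theorem~\ref{th-cc-conj} with Proposition~\ref{prop-cc-maxwell} (as recorded in Remark~\ref{rem-cc-conj-maxwell}) shows that the first conjugate time is never smaller than the first Maxwell time: when $\Kil{h} < 0$ both equal $\frac{2\pi I_2}{|h|}$, and when $\Kil{h} \geqslant 0$ both equal $+\infty$. Since on $U$ we have $t < \tmax(h) \leqslant \tconj(h)$, no point of $U$ is conjugate; as $\dim U = \dim V = 3$, the differential of $\Exp$ is an isomorphism at every point of $U$, and $\Exp|_U$ is a local diffeomorphism.

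Next I would settle the topological hypotheses. The covector level surface $\{H = -\frac{1}{2},\, h_1 < 0\}$ is one sheet of a two-sheeted hyperboloid, hence diffeomorphic to $\R^2$; the set $U$ is the region $\{0 < t < \tmax(h)\}$ below the graph of the continuous function $\tmax$ over this sheet, which is homeomorphic to $\R^2 \times (0,1)$ and in particular simply connected. For $V$ I would use the explicit descriptions given just before the proposition. In the symmetric case $V = \{(c,w) \mid \atan{|w|} < c < \pi - \atan{|w|}\}$ is a bundle of open intervals over the contractible base $w \in \C$, hence contractible. In the prolate case $V = \A \setminus (\partial\A \cup \M)$; here $\A \setminus \partial\A$ is an open solid of revolution and $\M = \{(c,0) \mid c \geqslant \pi(1+\eta)\}$ is a closed \emph{ray} lying on the symmetry axis. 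Removing a ray rather than a full line from a $3$-dimensional region leaves it simply connected, since any loop encircling the ray can be slid past its finite endpoint and contracted; this is the one genuinely topological point I would argue with care.

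The main obstacle is properness of $\Exp|_U$, which I would establish by a boundary analysis: any sequence $(h_n, t_n)$ leaving every compact subset of $U$ must have $\Exp(h_n, t_n)$ leaving every compact subset of $V$. Using the explicit formulas~\eqref{eq-circle-time-geodesic} and~\eqref{eq-circle-space-geodesic}, the boundary $\partial U$ splits into three regimes: as $t_n \to 0^+$ the image tends to $\id = (0,0) \in \partial\A$; as $t_n \to \tmax(h_n)$ with $\Kil{h_n} < 0$ one has $\tau_n \to \pi$, so $w(\tau_n) \to 0$ and the image tends to a point of $\M$; and as the covector escapes to infinity along the sheet ($\bar h_1 \to -\infty$, or $\Kil{h_n} \to 0^-$ in the prolate case) the image approaches $\partial\A$ or leaves every compactum. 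In all cases the limit lies in $\partial V = \partial\A \cup \M$, so no limit stays in $V$, which is precisely properness. With the local diffeomorphism property, properness, and simple connectedness of $V$ in hand, the Hadamard theorem yields that $\Exp|_U\colon U \to V$ is a diffeomorphism. As an alternative to the properness step, one could instead pass to the $\SO_2$-quotient of Proposition~\ref{prop-sym}, reducing to a planar map $(\bar h_1, \tau) \mapsto (c, |w|)$ whose injectivity would follow from monotonicity of the explicit expressions, but the behaviour at the fixed axis $w = 0$ then requires separate treatment.
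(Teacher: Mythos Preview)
Your proposal is correct and follows essentially the same route as the paper: apply Hadamard's global diffeomorphism theorem, check non-degeneracy via $\tconj = \tmax$ (Theorem~\ref{th-cc-conj} and Proposition~\ref{prop-cc-maxwell}), and verify properness by showing that sequences escaping $U$ are sent to $\partial\A$, to $\M$, or to infinity. Your treatment of simple connectedness of $V$ is more explicit than the paper's bare assertion; the only place to be slightly more careful is the properness case analysis in the prolate regime, where you should also explicitly cover the possibility that the covector stays bounded with $\Kil{h_n} \geqslant 0$ while $t_n \to +\infty$ (the paper folds this into its case~(c), observing that $\Exp(h_n,t_n)$ is then unbounded).
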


\begin{proof}
We will apply the Hadamard global diffeomorphism theorem~\cite{krantz-parks}, i.e., a smooth surjective nondegenerate proper map of two connected and simply connected manifolds of the same dimensions is a diffeomorphism.

Both manifolds $U$ and $V$ are connected, simply connected and three-dimensional. The map $\Exp: U \rightarrow V$ is smooth and surjective. Moreover, this map is non degenerate, since the first conjugate time is equal to the first Maxwell time in our case, see Theorem~\ref{th-cc-conj} and Proposition~\ref{prop-cc-maxwell}.
It remains to show that this map is proper, i.e., for any compact set $K \subset V$ its pre-image $\Exp^{-1}{K} \subset U$ is compact as well.

Assume by contradiction that $\Exp^{-1}{K}$ is not compact. Then there exist a sequence $(h^{(n)}, t^{(n)}) \in \Exp^{-1}{K}$ such that
at least one of the following conditions is satisfied (while $n \rightarrow +\infty$):
$$
\text{(a)} \ h^{(n)}_1 \rightarrow -\infty, \qquad \qquad
\text{(b)} \ t^{(n)} \rightarrow 0, \qquad \qquad
\text{(c)} \ t^{(n)} \rightarrow \tmax(h^{(n)}).
$$
Since $K$ is compact, then there exists a subsequence of the sequence $\Exp{(h^{(n)}, t^{(n)})}$ that converges to some element $q \in K$.
But in the case~(a) we have $q \in \partial\A$, in the case~(b) $q = (0,0)$. This contradicts with $K \subset V$.

Consider now the case~(c). If there is a subsequence such that $\Kil{h^{(n)}} < 0$, then $q \in \M$ and we get a contradiction.
If there is no such a subsequence, then for $n$ big enough we obtain $\Kil{h^{(n)}} \geqslant 0$.
It follows that the sequence $\Exp{(h^{(n)}, t^{(n)})}$ is unbounded in contradiction with the compactness of the set $K$.
\end{proof}

\begin{proposition}
\label{prop-light-like-optimal}
In the case $\eta \geqslant 0$ the light-like geodesics are optimal to the infinity.
\end{proposition}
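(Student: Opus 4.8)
The plan is to reduce everything to the two facts that a light-like geodesic has zero Lorentzian length and that its endpoint lies on $\partial\A$. Indeed, by Proposition~\ref{prop-energy}~(4) a light-like geodesic stays on the level $H=0$, so $Q(\dot\gamma)=0$ almost everywhere and $\int_0^{t_1}\sqrt{|Q(\dot\gamma)|}\,dt=0$ for every $t_1$. Hence such a geodesic is optimal (a longest arc) for all $t_1$ \emph{if and only if} the Lorentzian distance from $\tilde o=(0,0)$ to each point of its image is $0$; and since light-like geodesics sweep out $\partial\A$ (Lemma~\ref{lem-light-like-boundary} for $\eta=0$, Proposition~\ref{prop-atset-prolate} for $\eta>0$), it suffices to prove that \emph{every} admissible curve ending on $\partial\A$ has zero length.

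First I would introduce the ``height above the boundary'' function. Let $b$ be the boundary profile, so that $\A=\{(c,w)\mid c\geqslant b(|w|)\}$ with $b(0)=0$ (in the symmetric case $b(|w|)=\atan|w|$, in the prolate case $b$ is the function of Proposition~\ref{prop-atset-prolate}), and set $\phi(c,w)=c-b(|w|)$. Then $\phi\geqslant0$ on $\A$, $\phi=0$ exactly on $\partial\A$, and $\phi(\tilde o)=0$. The key claim is that $\phi$ is non-decreasing along every admissible curve, i.e.\ $d\phi(v)\geqslant0$ for each admissible velocity $v$ at each point. In the symmetric case this is precisely Lemma~\ref{lem-inside}: the vector $\bigl(1+|w|^2,-w/|w|\bigr)$ appearing there equals $(1+|w|^2)\,\nabla\phi$, so the asserted non-negativity of its Euclidean pairing with admissible velocities is exactly $d\phi(v)\geqslant0$. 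In the prolate case the same inequality follows from a direct computation: one substitutes the admissible velocities of Lemma~\ref{lem-admissible-velocities} into $d\phi(v)=\dot c-b'(|w|)\,\tfrac{d}{dt}|w|$ and checks the sign, the extremal (hence worst) case being the light-like velocities $\xi=\sqrt{\eta+1}\,|\omega|$.

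Granting the claim, the conclusion is immediate. If $\gamma:[0,t_1]\to G$ is admissible with $\gamma(0)=\tilde o$ and $\gamma(t_1)\in\partial\A$, then $\phi\circ\gamma$ is non-decreasing and vanishes at both endpoints, hence $\phi\circ\gamma\equiv0$; thus $\gamma$ stays on $\partial\A$ and $d\phi(\dot\gamma)=0$ almost everywhere. Since $\partial\A$ is the null hypersurface swept by light-like geodesics (its $Q$-normal from Lemma~\ref{lem-light-like-boundary} is $Q$-degenerate), its tangent spaces contain no time-like vectors; equivalently, equality $d\phi(v)=0$ for an admissible $v$ holds only when $v$ is light-like. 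Either way $\dot\gamma$ is light-like almost everywhere, so $Q(\dot\gamma)=0$ and $\gamma$ has zero length. Therefore the Lorentzian distance from $\tilde o$ to any point of $\partial\A$ equals $0$, which is attained by the light-like geodesic; so light-like geodesics are optimal for all terminal times.

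The routine parts are the parametrizations (already available) and the sign of $d\phi$. The main obstacle is the global monotonicity $d\phi(v)\geqslant0$ in the prolate case, where $b$ is the complicated profile of Proposition~\ref{prop-atset-prolate}: one must verify the inequality at \emph{all} points, not merely on $\partial\A$, and separately pin down its equality locus as exactly the light-like directions (so that a curve trapped on $\partial\A$ is forced to be null). The symmetric case needs none of this, being covered verbatim by Lemma~\ref{lem-inside}.
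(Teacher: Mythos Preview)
Your approach is genuinely different from the paper's. The paper dispatches the proposition in two lines via the geometric form of the Pontryagin maximum principle: since the light-like geodesics sweep $\partial\A$ (Propositions~\ref{prop-symmetric-case-atset} and~\ref{prop-atset-prolate}), any admissible curve reaching a point of $\partial\A$ is a boundary trajectory of the control system, hence an abnormal extremal by~\cite[Th.~12.1]{agrachev-sachkov}; and by Proposition~\ref{prop-energy}(1) every abnormal extremal is light-like, so has zero length. No height function, no computation with the profile $b$.

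Your calibration argument with $\phi(c,w)=c-b(|w|)$ is sound in spirit and, for $\eta=0$, it is exactly Lemma~\ref{lem-inside}. The real gap is the one you flag yourself: for $\eta>0$ you need $d\phi(v)\geqslant 0$ for \emph{all} admissible $v$ at \emph{all} points of $\A$, with the explicit prolate profile from Proposition~\ref{prop-atset-prolate}. That is a nontrivial global inequality you do not verify, and nothing in the paper supplies it; the proof as written is incomplete in the prolate case. Note also that your identification of the equality locus (``$d\phi(v)=0$ only for light-like $v$'') would require a sharpening of Lemma~\ref{lem-inside} even for $\eta=0$, since that lemma gives only non-strict inequality.

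If you want to rescue your route without the hard computation, the cleanest substitute is the standard causality fact that any causal curve from $p$ to a point of $\partial J^+(p)$ is, up to reparametrization, a null geodesic (see e.g.\ \cite{beem-ehrlich-easley}); here $\A=J^+(\tilde o)$ and $\partial\A\subset J^+(\tilde o)\setminus I^+(\tilde o)$, so the conclusion follows directly. That is essentially the Lorentzian-geometric twin of the paper's control-theoretic argument, and it avoids computing $b'$ altogether.
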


\begin{proof}
Assume by contradiction that there is an admissible curve coming to some point of a light-like geodesic with non-zero Lorentzian length.
Since light-like geodesics form the boundary of the attainable set $\A$ (see~Propositions~\ref{prop-symmetric-case-atset}, \ref{prop-atset-prolate}), then this curve is geometrically optimal and due to the Pontryagin maximum principle in geometric form~\cite[Th.~12.1]{agrachev-sachkov} this curve must be a light-like geodesic.
\end{proof}

\begin{proposition}
\label{prop-symmetric-existence}
In the symmetric case $I_1 = I_2 = I_3$ the longest arcs exist for the points of the set
$$
\A_{exist} = \{(c,w) \, | \, \atan{|w|} \leqslant c < \pi - \atan{|w|}\} \cup \{(\pi, 0)\}.
$$
\end{proposition}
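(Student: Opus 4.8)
The plan is to split $\A_{exist}$ into its three natural pieces---the light-like lower boundary $B = \{(c,w) \mid c = \atan{|w|}\}$, the open lens $V = \{(c,w) \mid \atan{|w|} < c < \pi - \atan{|w|}\}$, and the vertex $(\pi,0)$---and to produce a longest arc for each. The organising remark is that, by the Pontryagin maximum principle (Theorem~\ref{th-pmp}), every longest arc is a geodesic; hence for each terminal point it suffices to prove that \emph{some} longest arc exists, since the geodesic picture already obtained then identifies it uniquely.

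For the boundary $B$ I would invoke Proposition~\ref{prop-light-like-optimal}: the light-like geodesics sweep out $B$ by Lemma~\ref{lem-light-like-boundary} and are optimal to infinity, so the Lorentzian distance from $(0,0)$ to any point of $B$ equals $0$ and is realised by the light-like geodesic itself. Thus the longest arc exists on all of $B$.

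For the open lens $V$, Proposition~\ref{prop-diffeomorphism} shows that $\Exp: U \to V$ is a diffeomorphism, so each $q \in V$ is reached by exactly one time-like geodesic $\gamma_q$, at a time strictly below the first Maxwell time. A point of $V$ is not on the light-like boundary, so any longest arc to it is time-like and therefore equals $\gamma_q$; it remains only to prove existence. Here I would first establish that the Lorentzian distance $d$ from $(0,0)$ to $q$ is finite. Along any admissible curve the coordinate $c$ is strictly increasing, because $dc(v) > 0$ for every admissible $v$ when $\eta = 0$ (the lower estimate in the proof of Lemma~\ref{lem-prolate-exist} for the value of $dc$ on admissible velocities stays strictly positive and degenerates only as $|w| \to +\infty$). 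When $c_q < \tfrac{\pi}{2}$ this monotonicity traps the whole curve in the closed lens $\overline{V} \cap \{c \leqslant c_q\}$, on which $|w|$ is bounded and the ratio $\sqrt{|Q(v)|}/dc(v)$ is bounded; writing the length as $\int \sqrt{|Q(v)|}\,dt = \int \bigl(\sqrt{|Q(v)|}/dc(v)\bigr)\,dc$ then bounds $d$ and, by the direct method (compactness of the admissible curves of near-maximal length) or by Theorem~\ref{th-lp-exist} applied to a complete sub-Lorentzian structure adapted to the lens, yields a maximiser.

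For the vertex I would pass to the limit along a fixed time-like geodesic. By Proposition~\ref{prop-cc-maxwell}(2) all time-like geodesics reach $(\pi,0)$ at the common Maxwell time with one common length $\ell^{*}$; taking $q_n = \Exp(h,t_n) \to (\pi,0)$ with $t_n \uparrow \tmax(h)$ gives $d((0,0),q_n) = \ell_n \to \ell^{*}$, and lower semicontinuity of the Lorentzian distance together with the explicit geodesic of length $\ell^{*}$ forces $d((0,0),(\pi,0)) = \ell^{*}$, attained by that geodesic. \textbf{The main obstacle} is the finiteness-and-realisation step on the upper half of $V$, where $c_q \geqslant \tfrac{\pi}{2}$: there an admissible curve ending at $q \in V$ may a priori escape through the upper light cone $c = \pi - \atan{|w|}$ into the region where, by Proposition~\ref{prop-symmetric-case-atset}(3), the vertical-climb construction makes lengths arbitrarily large, and one must show such excursions cannot beat $\gamma_q$. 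I expect to close this by a calibration: the smooth value function $\rho = (\text{length of } \gamma_{(\cdot)})$, well defined on $V$ by Proposition~\ref{prop-diffeomorphism}, should satisfy the Lorentzian eikonal identity, so that $d\rho$ dominates $\sqrt{|Q(\cdot)|}$ on admissible velocities and no admissible curve staying in $V$ can exceed $\rho(q)$; the escaping curves are then excluded either by extending $\rho$ past the upper cone or by reducing to the axial section $\AdS_2$ through the $\SO_2$-symmetry of Proposition~\ref{prop-sym} and quoting the optimality analysis of~\cite{ali-sachkov}.
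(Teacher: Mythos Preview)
Your three-piece decomposition and the arguments for the light-like boundary $B$ and the vertex $(\pi,0)$ match the paper's proof exactly (light-like optimality via Proposition~\ref{prop-light-like-optimal}; the vertex via lower semi-continuity of the Lorentzian distance together with the common length $2\pi I_2$ of the geodesics arriving there). For the open lens $V$ the paper skips your finiteness/compactness detour and goes straight to the calibration you sketch at the end, phrased as the fields-of-extremals technique of~\cite[Th.~17.2]{agrachev-sachkov}: the Lagrangian submanifold $L=\{e^{t\vec{H}}h \mid H(h)=-\tfrac12,\ 0<t<\pi\}$ projects diffeomorphically to $V$ by Proposition~\ref{prop-diffeomorphism}, and the pulled-back tautological $1$-form is the closed calibrating form. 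Your suggestion to apply Theorem~\ref{th-lp-exist} on the lens would not go through, since the lower bound for $\tau_{(c,w)}(v)$ in the proof of Lemma~\ref{lem-prolate-exist} degenerates exactly when $\eta=0$.

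Your ``main obstacle'' dissolves once you observe that competitor curves cannot escape $\overline{V}$. Using the multiplication rule~\eqref{eq-mult} and $p^{-1}=(-c_p,-w_p)$ one computes $p^{-1}\cdot(\pi,0)=(\pi-c_p,\,w_p)$, so $(\pi,0)$ is attainable from $p$ iff $c_p\leqslant\pi-\atan{|w_p|}$; the upper hypersurface of $V$ is thus precisely the boundary of the past attainable set of $(\pi,0)$. Now if an admissible curve from $\id$ to $q\in V$ visited some $p$ with $c_p>\pi-\atan{|w_p|}$, concatenate its tail $p\to q$ with any admissible arc $q\to(\pi,0)$ (which exists since $q\in V$): this makes $(\pi,0)$ attainable from $p$, a contradiction. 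Hence every competitor to $q\in V$ stays in $\overline{V}=\A\cap\{c\leqslant\pi-\atan{|w|}\}$, the calibration applies directly, and no extension of $\rho$ past the upper cone or reduction to $\AdS_2$ via~\cite{ali-sachkov} is needed.
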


\begin{proof}
See Fig.~\ref{pic-symmetric-case-atset}~(c) for the set $\A_{exist}$.

First, let us show that for any point of the set $\interior{\A_{exist}} = \{(c,w) \, | \, \atan{|w|} < c < \pi - \atan{|w|}\}$ the longest arc exists.
This is the consequence of the fact that any point of this set can be reached by the unique geodesic, see Proposition~\ref{prop-diffeomorphism}. A formal proof is based on the fields of extremals technic, see~\cite[Th.~17.2]{agrachev-sachkov}. Also an example of its application to the sub-Lorentzian structure on the Heisenberg group can be found in~\cite[Sec.~7.2]{sachkov-sachkova}.

Namely, consider the submanifold $L = \{e^{t\vec{H}}h \, | \, 0 < t < \pi, \, h \in \g^*, H(h) = -\frac{1}{2} \} \subset T^*G$.
This submanifold is a Lagrange submanifold. Indeed, its dimension equals $\dim{G} = 3$ and the symplectic structure vanishes,
since it vanishes on the fiber of the cotangent bundle $T^*_{\id}G = \g^*$ and the Hamiltonian vector field $\vec{H}$ is symplectically orthogonal to the level set of the Hamiltonian $H$. Proposition~\ref{prop-diffeomorphism} implies that the Lagrange manifold $L$ is diffeomorphic to the domain $\interior{\A_{exist}}$. It follows that for any point of the set $\interior{\A_{exist}}$ the unique geodesic coming to this point is optimal.

Second, consider the lower part of the boundary of the set $\A_{exist}$. The light-like geodesics sweep this boundary $\{(c,w) \, | \, c = \atan{|w|}\}$.
By Proposition~\ref{prop-light-like-optimal} the light-like geodesics are optimal.

Third, let us prove that any geodesic from the one-parameter family of geodesics coming to the point $(\pi,0)$ is optimal.
We will use the semi-continuity of the Lorentzian distance, see~\cite[Lem.~4.4]{beem-ehrlich-easley}.
Namely, assume that $p,q \in M$ are two points of a Lorentzian manifold $M$ and there are two sequences of points $p_n \rightarrow p$, $q_n \rightarrow q$.
We will denote by $d(p,q)$ the Lorentzian distance from the point $p$ to the point $q$.
If $d(p,q) < +\infty$, then $d(p,q) \leqslant \liminf\limits_{n \rightarrow +\infty}{d(p_n,q_n)}$.
If $d(p,q) = +\infty$, then $d(p,q) \leqslant \lim\limits_{n \rightarrow +\infty}{d(p_n,q_n)} = +\infty$.
Let us take $p_n = p = (0,0)$ and $q_n \rightarrow q = (\pi,0)$, $q_n \in \interior{\A_{exist}}$.
It follows that $d(p,q) \leqslant \lim\limits_{n \rightarrow +\infty}{d(p,q_n)} = 2\pi I_2$.
But since all geodesics coming to the point $q$ have the Lorentzian length $2\pi I_2$, then $2\pi I_2 \leqslant d(p,q)$.
Hence, $d(p,q) = 2\pi I_2$ and any such geodesic is optimal.
\end{proof}

Now item~(2) of Theorem~\ref{th-A} follows from Propositions~\ref{prop-symmetric-case-atset} and \ref{prop-symmetric-existence} and
the proof of Theorem~\ref{th-A} in the three dimensional case is complete.

\begin{thm-hand}[\ref{th-B}.]
\label{th-cc-cut-locus}
Assume that $\eta \geqslant 0$. Then the cut locus coincides with the Maxwell set and the cut time equals the first Maxwell time.
Namely,
$$
\Cut = \left\{
\begin{array}{lll}
\{(\pi, 0)\}, & \text{if} & \eta = 0,\\
\{(c,0) \, | \, c \geqslant \pi(1+\eta)\}, & \text{if} & \eta > 0,\\
\end{array}
\right.
\qquad
\tcut(h) = \left\{
\begin{array}{lll}
\frac{2 \pi I_2}{|h|}, & \text{if} & \Kil{h} < 0,\\
+\infty, & \text{if} & \Kil{h} \geqslant 0.\\
\end{array}
\right.
$$
\end{thm-hand}

\begin{proof}
It follows from Propositions~\ref{prop-diffeomorphism}--\ref{prop-light-like-optimal} that there is unique geodesic coming to an arbitrary point of the set
$\A \setminus \M$. From Proposition~\ref{prop-symmetric-existence} and Proposition~\ref{prop-atset-prolate}~(2) we know that the longest arcs exist.
This imply that the set $\Cut = \M$ is in fact the cut locus.
Hence, we proved Theorem~\ref{th-B} in the three dimensional case.
\end{proof}

The Lorentzian geodesics are presented on Fig.~\ref{pic-geodesics}. The first Maxwell points are indicated there by thick points on the $c$-axis. One can see on Fig.~\ref{pic-geodesics}~(a) the envelope curve which corresponds to the first conjugate points. Notice that in the oblate case all of the geodesics have Maxwell points, see Fig.~\ref{pic-geodesics}~(a). In the symmetric case all of the time-like geodesics come to the Maxwell point $(\pi,0)$, while the light-like geodesics are optimal to the infinity (dashed lines), see Fig.~\ref{pic-geodesics}~(b). In the prolate case some of the time-like geodesics have Maxwell points (if $\Kil{h} < 0$, where $h$ is an initial covector), but the time-like geodesics with initial covectors $h$ such that $\Kil{h} \geqslant 0$ are optimal to the infinity as well as the light-like ones, see dashed lines on Fig.~\ref{pic-geodesics}~(c).

The wavefronts are plotted on Fig.~\ref{pic-wavefronts}. The singularities are located at the Maxwell set.

\begin{figure}[h]
\minipage{0.32\textwidth}
  \includegraphics[height=5cm,width=\linewidth]{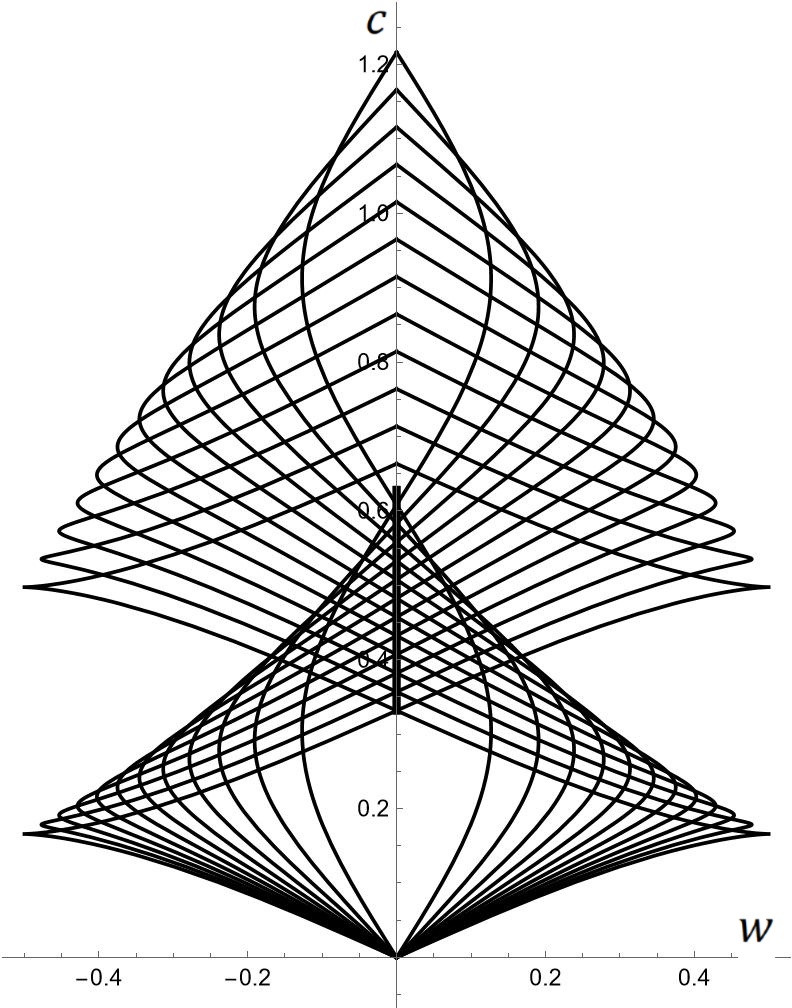}
  \\ \center{(a) The oblate case $\eta = -0.8$.}
\endminipage\hfill
\minipage{0.32\textwidth}
  \includegraphics[height=5cm,width=\linewidth]{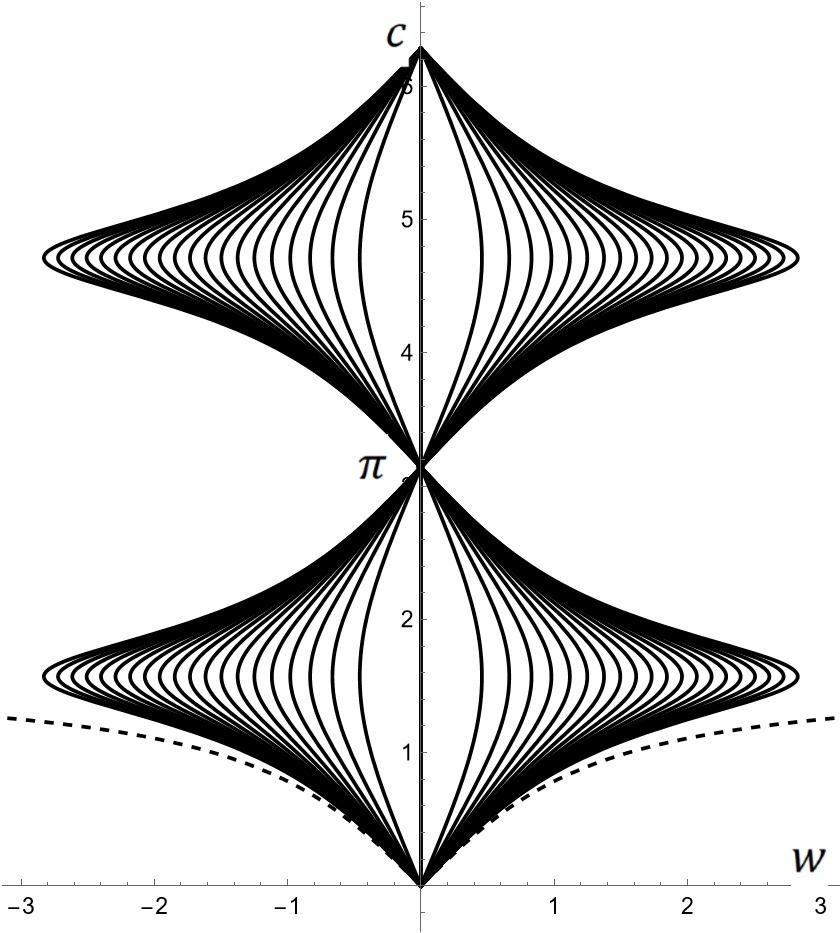}
  \\ \center{(b) The symmetric case $\eta = 0$.}
\endminipage\hfill
\minipage{0.32\textwidth}
  \includegraphics[height=5cm,width=\linewidth]{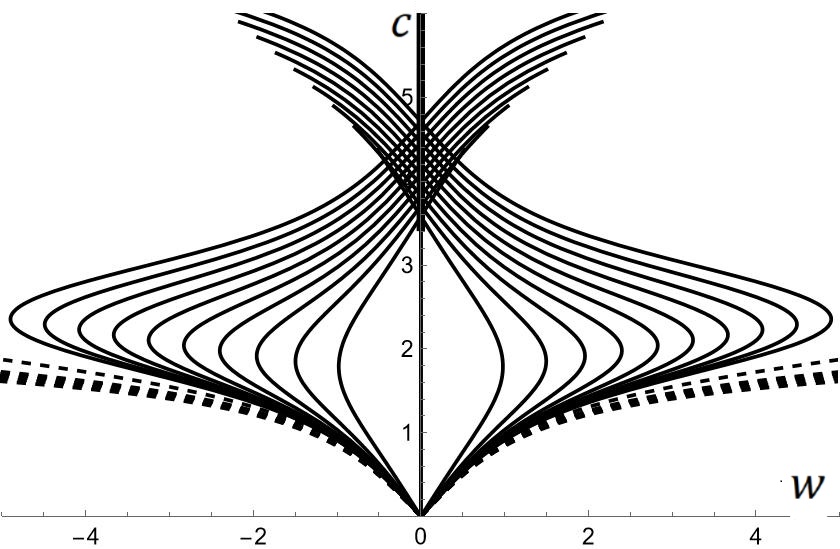}
  \\ \center{(c) The prolate case $\eta = 0.1$.}
\endminipage
\caption{\label{pic-geodesics}Geodesics that lose optimality (the solid lines), geodesics that are optimal to infinity (the dashed lines) and the Maxwell points (the thick points on the $c$-axis). The surface of revolution with respect to $c$-axis.}
\end{figure}

\begin{figure}[h]
\minipage{0.32\textwidth}
  \includegraphics[height=4cm,width=\linewidth]{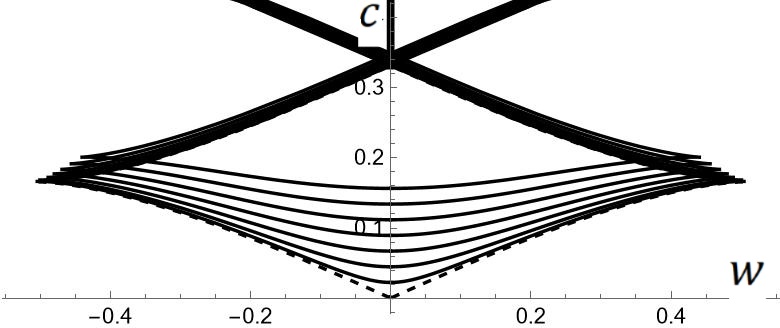}
  \\ \center{(a) The oblate case $\eta = -0.8$.}
\endminipage\hfill
\minipage{0.32\textwidth}
  \includegraphics[height=4cm,width=\linewidth]{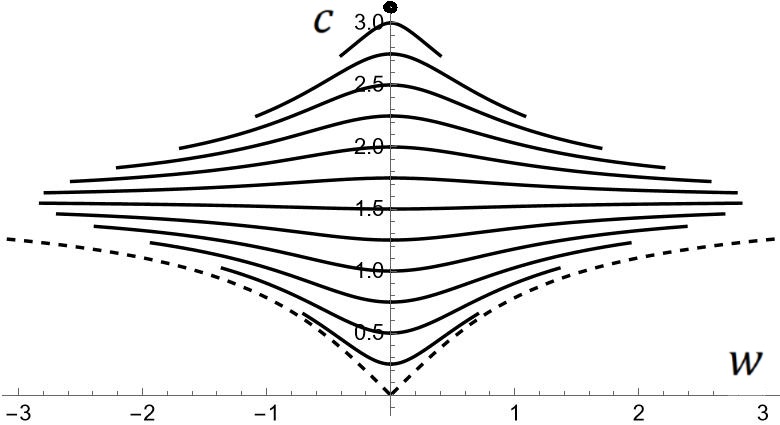}
  \\ \center{(b) The symmetric case $\eta = 0$.}
\endminipage\hfill
\minipage{0.32\textwidth}
  \includegraphics[height=4cm,width=\linewidth]{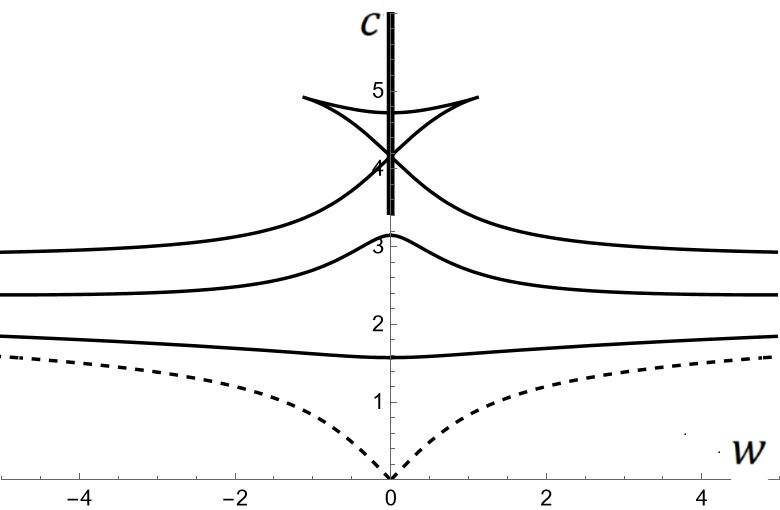}
  \\ \center{(c) The prolate case $\eta = 0.1$.}
\endminipage
\caption{\label{pic-wavefronts}The wavefronts (the solid lines) and the Maxwell points (the thick points on the $c$-axis). The dashed lines are the wavefront of zero radius (the light-like geodesics). The surface of revolution with respect to $c$-axis.}
\end{figure}

\section{\label{sec-inj-rad}The injectivity radius}

In this section, we compute the injectivity radius of our Lorentzian metric using the previous result on the cut time.

Let us recall the notion of the injectivity radius for Lorentzian manifolds. We follow~\cite[Def.~2.1]{chen-lefloch}.
Recall that we denote our Lorentzian quadratic form by $Q$. The Killing form $\Kil$ defines an isomorphism $\g \simeq \g^*$.
Thus, we can consider (with the help of this isomorphism) the quadratic forms $Q$ and $\Kil$ as quadratic forms on the space $\g^*$.
Fix some covector $p \in \g^*$ such that $H(p) = -\frac{1}{2}$ and $p_1 < 0$.
We will call this covector \emph{an observer momentum}, the corresponding velocity $(-p_1I_1, p_2I_2, p_3I_2)$ is the velocity of a time-like observer.
Consider the orthogonal complement to $p$ with respect to the bilinear form associated with $Q$ and denote it by $W = (\sspan{\{p\}})^{\perp_Q}$.
Note that the subspace $W$ is space-like.
Define a positive definite quadratic form $g_{R,p}$ as follows:
$$
g_{R,p}(h) = -Q(h_p) + Q(h_W), \qquad \text{where} \qquad h = h_p + h_W \in \g^*
$$
is a decomposition of an element $h$ with respect to the direct sum $\g^* = \sspan{\{p\}} \oplus W$.

\begin{definition}
The supremum of radii $r$ such that the restriction of the Lorentzian exponential map $\Exp$ to the Riemannian ball $B_r^p = \{h \in \g^* \, | \, g_{R,p}(h) < r^2\}$ is a diffeomorphism is called \emph{the injectivity radius $R_{inj}(p)$ with respect to an observer momentum $p$}.
\end{definition}

In other words, $R_{inj}(p) = \inf{\{\tcut(h) \, | \, h \in B_1^p\}}$.

\begin{thm-hand}[\ref{th-C}.]
\label{th-inj-rad}
The injectivity radius with respect to an observer momentum $p$ is equal to
$$
\Rinj(p) = \left\{
\begin{array}{lll}
0, & \text{if} & \eta < 0,\\
2\pi \frac{I_2}{\sqrt{|\lambda|}}, & \text{if} & \eta \geqslant 0,\\
\end{array}
\right.
$$
where $\lambda$ is the negative eigenvalue of the Killing form with respect to the Euclidian structure $g_{R,p}$.
\end{thm-hand}

\begin{proof}
First, consider the case $\eta < 0$. It follows from Remark~\ref{rem-conj-time-infinitely-small} that $\Rinj(p) = 0$.

Second, consider the case $\eta \geqslant 0$.
It follows from Theorem~\ref{th-B} that
\begin{equation}
\label{eq-sup}
R_{inj}(p) = 2\pi I_2 \Bigm/ \sup{\left\{|h| \, | \, h \in B_1^p, \, H(h) = -\frac{1}{2}, \, h_1 < 0, \, \Kil{h} < 0 \right\}}.
\end{equation}

Let us consider an orthonormal basis with respect to the positive definite bilinear form associated with $g_{R,p}$ that is orthogonal with respect to the Killing form $\Kil$.
The supremum in formula~\eqref{eq-sup} is taken over an open set that is time-like with respect to the Killing form $\Kil$.
Hence, this supremum is achieved on the element $x$ of this common basis such that the corresponding eigenvalue $\lambda = \Kil{x}$ of the form $\Kil$ is negative.
So, this supremum equals $\sqrt{|\lambda|}$.
Thus, the proof of Theorem~\ref{th-C} in the three dimensional case is complete.
\end{proof}

\begin{example}
\label{ex-injradius}
The simplest case is $p = (-\sqrt{I_1}, 0, 0)$, the corresponding velocity of the observer is directed along $c$-axis. We have already the required common diagonal forms for $g_{R,p}$ and $\Kil$. So, the supremum from the proof of Theorem~\ref{th-C} is achieved on the vector $x = p$.
Whence, $\lambda = \Kil{p} = -I_1$.
It follows that $R_{inj}(p) = 2\pi\frac{I_2}{\sqrt{I_1}}$.
\end{example}

\section{\label{sec-general}General case}

In this section, we consider the general case of the Lorentzian homogeneous space $\HH^{1,n} = \U_{1,n} / \U_n$ and its universal covering $M$ and reduce the corresponding problem to the three dimensional case $\HH^{1,1} = \U_{1,1} / \U_1$.

\begin{lemma}
\label{lem-totally-geodesic}
The submanifold $\HH^{1,1} = \{ (z,w) \in \HH^{1,n} \, | \, w_2 = \dots = w_n = 0 \} \subset \HH^{1,n}$ is totally geodesic, i.e.,
the geodesics of the submanifold $\HH^{1,1}$ are also geodesics of the manifold $\HH^{1,n}$.
\end{lemma}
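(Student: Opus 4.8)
The plan is to realize $\HH^{1,1}$ as the fixed point set of a group of isometries of $M$ and then invoke the classical principle that such fixed point sets are totally geodesic. Recall that the Lorentzian structure on $M$ is $\widetilde{\U}_{1,n}$-invariant and that, in the coordinates $(c,w)$, the stabilizer $\U_n$ of the base point $\tilde{o}=(0,0)$ acts by $(c,w)\mapsto(c,Aw)$ for $A\in\U_n$; this is compatible with the projection $\Pi$ since $\Pi(c,Aw)=(\sqrt{1+|w|^2}e^{ic},Aw)$ uses only $|Aw|=|w|$, and it is therefore by isometries.

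Assuming $n\geq 2$, I would single out the subgroup $\U_{n-1}\subset\U_n$ consisting of the transformations that fix the coordinate $w_1$ and act unitarily on $(w_2,\dots,w_n)$. This connected group acts on $M$ by the isometries $(c,w_1,w_2,\dots,w_n)\mapsto(c,w_1,B(w_2,\dots,w_n))$ with $B\in\U_{n-1}$, and its fixed point set is exactly $\{(c,w)\in M\mid w_2=\dots=w_n=0\}=\HH^{1,1}$, because the only point of $\C^{n-1}$ fixed by all of $\U_{n-1}$ is the origin. The second step is the standard connection-theoretic argument, which carries over verbatim to the pseudo-Riemannian case: isometries preserve the Levi-Civita connection and hence send geodesics to geodesics preserving initial data. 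Thus, for a fixed point $p\in\HH^{1,1}$ and a vector $v\in T_p\HH^{1,1}$, which is fixed by the differential of every $B\in\U_{n-1}$, the ambient geodesic $\gamma_v$ satisfies $B\circ\gamma_v=\gamma_v$ and so remains inside $\HH^{1,1}$. Since $T_p\HH^{1,1}$ is precisely the common fixed subspace of these differentials, every ambient geodesic tangent to $\HH^{1,1}$ stays in it, which is the definition of totally geodesic.

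The point to handle with care is that we work in the Lorentzian rather than the Riemannian setting, so I would check that $\HH^{1,1}$ is a nondegenerate submanifold, ensuring it inherits its own Lorentzian structure and Levi-Civita connection and therefore has well-defined geodesics to compare against. This is immediate because the restricted quadratic form on $\HH^{1,1}$ has signature $(1,2)$; and the totally-geodesic argument above never uses positive definiteness, only uniqueness of geodesics with prescribed initial data and invariance of the connection under isometries, both of which hold for pseudo-Riemannian metrics. The main subtlety, then, is the bookkeeping that the $\U_{n-1}$-action is genuinely isometric on the universal cover $M$ and has the stated fixed set; once that is verified, the conclusion follows and the geodesics of $\HH^{1,1}$ coincide with the ambient geodesics tangent to it, reducing the general case to the three-dimensional one.
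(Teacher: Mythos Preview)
Your argument is correct and follows essentially the same idea as the paper: the fixed-point set of an isometry (or a group of isometries) is totally geodesic, and this principle carries over unchanged to the pseudo-Riemannian setting. The only cosmetic difference is that the paper singles out a \emph{single} isometry $\diag(1,\dots,1,e^{i\varphi})\in\U_{1,n}$ with $\varphi\notin 2\pi\Z$, whose fixed set is $\HH^{1,n-1}=\{w_n=0\}$, and then iterates to peel off one coordinate at a time, whereas you use the whole block $\U_{n-1}$ at once; also, you phrase everything on the cover $M$ while the lemma is stated on $\HH^{1,n}$, but this is harmless since the argument is local and transfers in either direction.
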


\begin{proof}
The submanifold $\HH^{1,n-1} = \{ (z,w) \in \HH^{1,n} \, | \, w_n = 0 \} \subset \HH^{1,n}$ is totally geodesic, since
it is the set of the fixed points of the isometry $\diag{(1,\dots,1,e^{i\varphi})} \in \U_{1,n}$ where $\varphi \notin 2\pi \Z$.
It remains to apply this fact many times.
\end{proof}

\begin{lemma}
\label{lem-any-geodesic}
\emph{(1)} Any geodesic of the manifold $\HH^{1,n}$ starting from the point $o$ has the form $A g(t)$ for some $A \in \U_n$,
where $g(\cdot)$ is a geodesic of the manifold $\HH^{1,1}$ starting from the point $o$.\\
\emph{(2)} The same is true for the submanifold $\widetilde{\HH}^{1,1} \subset \widetilde{\HH}^{1,n}$ and $\widetilde{\U}_n$-action.
\end{lemma}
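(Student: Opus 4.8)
The plan is to exploit the isometric action of the stabilizer $\U_n$ together with the total geodesy of $\HH^{1,1}$ established in Lemma~\ref{lem-totally-geodesic}. An element $A \in \U_n$, viewed inside $\U_{1,n}$ as $\diag(1,A)$, acts on $\HH^{1,n}$ by $(z,w) \mapsto (z, Aw)$; this map lies in $\U_{1,n}$, hence is an isometry of the $\U_{1,n}$-invariant form $Q$, and it fixes the base point $o = (1,0)$. Since isometries carry geodesics to geodesics, the group $\U_n$ permutes the geodesics through $o$. First I would record the linearized action on the tangent space $T_o\HH^{1,n} = \{(ia, w) \mid a \in \R, \ w \in \C^n\}$: the differential of $(z,w) \mapsto (z,Aw)$ at $o$ is simply $(ia, w) \mapsto (ia, Aw)$.

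Given an arbitrary geodesic $\gamma$ issuing from $o$ with initial velocity $v = (ia, w)$, I would use the transitivity of $\U_n$ on complex spheres to pick $A \in \U_n$ with $Aw = (w', 0, \dots, 0)$ for some $w' \in \C$ (taking $w' = 0$ when $w = 0$). Then $A_* v = (ia, (w', 0, \dots, 0))$ lies in $T_o\HH^{1,1}$. Because $A$ is an isometry fixing $o$, it sends $\gamma$ to the geodesic $A\gamma$ whose initial velocity is $A_* v \in T_o\HH^{1,1}$; by Lemma~\ref{lem-totally-geodesic} the submanifold $\HH^{1,1}$ is totally geodesic, so $g := A\gamma$ remains in $\HH^{1,1}$ and is a geodesic of $\HH^{1,1}$ starting at $o$. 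Setting $\tilde{A} = A^{-1} \in \U_n$ gives $\gamma = \tilde{A} g$, which proves item~(1).

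For item~(2) I would run the same argument upstairs in $M = \widetilde{\HH}^{1,n}$. Since $\U_n$ is connected and fixes $o$, once a lift $\tilde{o}$ of $o$ is fixed its action lifts uniquely to an action of the universal cover $\widetilde{\U}_n$ on $M$ by isometries fixing $\tilde{o}$, and the covering projection restricts to a covering $\widetilde{\HH}^{1,1} \to \HH^{1,1}$ and carries geodesics to geodesics in both directions. The covering differential identifies $T_{\tilde{o}}M$ with $T_o\HH^{1,n}$ and intertwines the linearized $\widetilde{\U}_n$-action with $(ia,w) \mapsto (ia, Aw)$, so the reduction of the initial velocity into $T_{\tilde{o}}\widetilde{\HH}^{1,1}$ goes through verbatim. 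The only point demanding care is the well-definedness and isometric character of this lifted $\widetilde{\U}_n$-action, but this is the standard lifting of a connected isometry group with a fixed point; everything else is a direct transcription of the three-dimensional step.
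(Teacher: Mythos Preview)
Your proposal is correct and follows essentially the same argument as the paper: use that the stabilizer $\U_n$ acts by isometries fixing $o$, that it acts transitively on spheres in the $w$-factor of $T_o\HH^{1,n}$, rotate the initial data into $T_o\HH^{1,1}$, and invoke Lemma~\ref{lem-totally-geodesic}. The paper phrases this with initial covectors rather than velocities and dispatches item~(2) with a single sentence, but the substance is identical.
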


\begin{proof}
(1) The stabilizer $\U_n$ acts by isometries. Thus, it transforms geodesics to geodesics.
Moreover, it acts transitively on spheres in the subspace
$\{ (0,w) \, | \, w \in \C^n \} \subset \{ (ia,w) \, | \, a \in \R, \, w \in \C^n \} = T^*_o\HH^{1,n}$
of the initial covectors space.
Hence, choosing a suitable $A^{-1}$ we can move a geodesic initial covector to the subspace
$T^*_o\HH^{1,1} = \{(ia,w) \, | \, w_2 = \dots = w_n = 0 \} \subset T^*_o\HH^{1,n}$.
The corresponding geodesic is a geodesic of the submanifold $\HH^{1,1}$ due to Lemma~\ref{lem-totally-geodesic}.
The same arguments prove item~(2).
\end{proof}

\begin{lemma}
\label{lem-general-geodesic-atset}
The set attainable by geodesics for the general case has the same description as in the three dimensional case.
\end{lemma}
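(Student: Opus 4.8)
The plan is to reduce the statement to the three-dimensional case already treated in Theorem~\ref{th-A}, using the decomposition of geodesics provided by Lemma~\ref{lem-any-geodesic}. First I would recall that, by Lemma~\ref{lem-any-geodesic}~(2), every geodesic of $M = \widetilde{\HH}^{1,n}$ issuing from $\tilde{o}$ has the form $A\,g(t)$ for some $A \in \widetilde{\U}_n$, where $g(\cdot)$ is a geodesic of the totally geodesic submanifold $\widetilde{\HH}^{1,1}$ starting from $\tilde{o}$. Consequently, if $\A_{extr}$ denotes the set attainable by geodesics in the general case and $\A_{extr}^{(1)} \subset \widetilde{\HH}^{1,1}$ the corresponding set in the three-dimensional case, then $\A_{extr} = \widetilde{\U}_n \cdot \A_{extr}^{(1)}$.

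The next step is to describe the $\widetilde{\U}_n$-action in the coordinates $(c,w)$. Since the stabilizer $\U_n$ acts on $\HH^{1,n}$ by $(z,w) \mapsto (z, Aw)$ and $|Aw| = |w|$, the projection formula $\Pi(c,w) = (\sqrt{1+|w|^2}e^{ic}, w)$ shows that in these coordinates the action reads $A \cdot (c,w) = (c, Aw)$, where $A$ acts on $w \in \C^n$ through the covering $\widetilde{\U}_n \to \U_n$. Hence the $\widetilde{\U}_n$-action fixes the coordinate $c$, preserves the norm $|w|$, and acts transitively on each sphere $\{w \in \C^n \mid |w| = r\}$.

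Finally I would invoke the rotational symmetry established in the three-dimensional case (the $\SO_2$-symmetry of Proposition~\ref{prop-sym}, equivalently Lemma~\ref{lem-round}): the set $\A_{extr}^{(1)}$ is a surface of revolution about the $c$-axis, so it has the form $\{(c,w) \in \widetilde{\HH}^{1,1} \mid (c,|w|) \in D\}$ for a region $D \subset \R \times \R_{\geqslant 0}$ --- precisely the region appearing in Theorem~\ref{th-A}. Spreading this set over the full $w$-space by the transitive action on spheres gives $\A_{extr} = \{(c,w) \in M \mid (c,|w|) \in D\}$, which is the identical description, now with $|w|$ understood as the norm in $\C^n$.

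The only delicate points, to which I would pay attention, are the precise form of the $\widetilde{\U}_n$-action in the chosen coordinates and its transitivity on spheres; once these are fixed, the argument is purely formal. I would also make sure that both the light-like boundary geodesics and the time-like geodesics are covered by Lemma~\ref{lem-any-geodesic}, so that the whole boundary and interior of the attainable set are obtained from their three-dimensional counterparts.
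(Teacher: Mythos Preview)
Your proposal is correct and follows essentially the same route as the paper's proof: both use Lemma~\ref{lem-any-geodesic} to write every geodesic as a $\widetilde{\U}_n$-translate of a three-dimensional one, and then observe that the description in Theorem~\ref{th-A} depends only on the $\widetilde{\U}_n$-invariants $c$ and $|w|$. The paper's own argument is considerably terser---it simply notes that $c$ and $|w|$ are invariant under the action---while you spell out the coordinate form of the action and the transitivity on spheres, which is indeed what guarantees that the full $(2n+1)$-dimensional set (and not just its trace on $\widetilde{\HH}^{1,1}$) is obtained.
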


\begin{proof}
Immediately follows from Lemma~\ref{lem-any-geodesic},
Indeed, the conditions for the set attainable by geodesics in Theorem~\ref{th-A} include only $c$ and $|w|$ values.
But both of them are invariant under the $\widetilde{\U}_n$-action.
\end{proof}

\begin{proposition}
\label{prop-general-atset}
The attainable set for the general case has the same description as in the three dimensional case.
\end{proposition}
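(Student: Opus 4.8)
The plan is to reduce everything to the three-dimensional results of Theorem~\ref{th-A} by means of the $\widetilde{\U}_n$-symmetry, just as the set attainable by geodesics was handled in Lemma~\ref{lem-general-geodesic-atset}. Write $\A$ for the attainable set from $\tilde o$ in $M=\widetilde{\HH}^{1,n}$ and let $\mathcal D$ denote the set described in Theorem~\ref{th-A}. Two preliminary observations drive the argument. First, $\widetilde{\U}_n$ fixes $\tilde o$, acts by isometries, and preserves the time direction (which points along the $c$-axis); hence it sends admissible curves issuing from $\tilde o$ to admissible curves issuing from $\tilde o$, so $\A$ is $\widetilde{\U}_n$-invariant. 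Second, $\mathcal D$ is cut out by conditions involving only $c$ and $|w|$, so it too is $\widetilde{\U}_n$-invariant, and $\widetilde{\U}_n$ acts transitively on each sphere $\{|w|=\mathrm{const}\}$ (as in the proof of Lemma~\ref{lem-any-geodesic}).

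For the inclusion $\mathcal D\subseteq\A$ I would first note that, since the submanifold $\widetilde{\HH}^{1,1}$ is totally geodesic (Lemma~\ref{lem-totally-geodesic}) and carries the \emph{restriction} of $Q$, any curve lying in $\widetilde{\HH}^{1,1}$ whose velocity is non-spacelike and future-directed for the three-dimensional structure is automatically admissible for the $n$-dimensional structure. Consequently the three-dimensional attainable set, which by Theorem~\ref{th-A} equals $\mathcal D\cap\widetilde{\HH}^{1,1}$, is contained in $\A$. Taking $\widetilde{\U}_n$-orbits and using transitivity on spheres together with the invariance of $\A$ gives $\mathcal D=\widetilde{\U}_n\cdot(\mathcal D\cap\widetilde{\HH}^{1,1})\subseteq\A$; in the oblate case this already yields $M=\A$.

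The inclusion $\A\subseteq\mathcal D$ is the substantive one. In the oblate case it is vacuous. In the symmetric and prolate cases $\mathcal D=\{F\geqslant 0\}$ with $F(c,w)=c-f(|w|)$, where $f$ is the profile function of Theorem~\ref{th-A}, and it suffices, by a barrier (Nagumo-type) argument, to show that $dF(v)\geqslant 0$ for every admissible velocity $v$ at every boundary point of $\{F=0\}$; since $\tilde o\in\{F\geqslant 0\}$, the closed region is then forward-invariant and $\A\subseteq\mathcal D$ follows. By $\widetilde{\U}_n$-invariance I may assume the boundary point is $(c_0,w_0)$ with $w_0=(r,0,\dots,0)$, $r>0$; there the Euclidean gradient of $F$ lies in the tangent plane of $\widetilde{\HH}^{1,1}$. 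Substituting the $n$-dimensional admissible velocity from (the evident analogue of) Lemma~\ref{lem-admissible-velocities} into $dF$, only the $c$-component and the $w_1$-component survive, and both depend solely on the pair $(\xi,\omega_1)$; thus $dF(v)$ equals the three-dimensional differential $dF_{3D}$ evaluated on the planar admissible velocity produced by the control $(\xi,\omega_1)$. This control is genuinely admissible in three dimensions because $I_1\xi^2\geqslant I_2|\omega|^2\geqslant I_2|\omega_1|^2$. Finally, since the three-dimensional attainable set equals the closed region $\{F_{3D}\geqslant 0\}$ and is forward-invariant (Propositions~\ref{prop-symmetric-case-atset} and \ref{prop-atset-prolate}), one has $dF_{3D}\geqslant 0$ along its boundary --- this is precisely Lemma~\ref{lem-inside} in the symmetric case --- whence $dF(v)\geqslant 0$, as required.

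The main obstacle I anticipate is the technical bookkeeping in the last paragraph: verifying that $dF$ collapses onto the $(\xi,\omega_1)$-data and matches the planar computation, and justifying the barrier argument with the non-strict inequality and the mild non-smoothness of $F$ at $w=0$ (the cone tip $\tilde o$, which is handled exactly as in three dimensions). As an alternative route for the prolate case one can bypass the boundary analysis altogether: the existence criterion of Theorem~\ref{th-lp-exist} applies verbatim in dimension $n$ with the closed $1$-form $\tau=dc$ (its estimate involves only $c$ and $|w|$), so longest arcs exist, the attainable set then coincides with the set attainable by geodesics, and the latter already carries the three-dimensional description by Lemma~\ref{lem-general-geodesic-atset}.
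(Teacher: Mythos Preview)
Your proposal is correct. For the oblate case and the inclusion $\mathcal D\subseteq\A$ your argument is identical to the paper's. For the reverse inclusion your barrier/Nagumo reduction --- projecting the $n$-dimensional control $(\xi,\omega)$ onto $(\xi,\omega_1)$ and invoking forward-invariance of the three-dimensional attainable set to obtain $dF_{3D}\geqslant 0$ on its boundary --- is a clean uniform argument that the paper uses only in the symmetric case (there via Lemma~\ref{lem-inside}). In the prolate case the paper instead follows your ``alternative route'': it redoes the estimate of Lemma~\ref{lem-prolate-exist} in dimension $n$ (writing the admissible-velocity cone explicitly with a diagonal matrix $A=\diag(\sqrt{1+|w|^2}e^{ic},1,\dots,1)$ and bounding $|A\omega/\xi-iw|$), so that Theorem~\ref{th-lp-exist} yields existence of longest arcs, and then repeats the Pontryagin-boundary argument of Proposition~\ref{prop-atset-prolate}(1). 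Your barrier route avoids this repetition at the cost of relying on the converse Nagumo implication (forward-invariance of a $C^1$ closed region $\Rightarrow dF\geqslant0$ on the boundary); that implication is elementary but not stated explicitly in the paper, so if you keep the barrier argument you should spell it out. Either way the prolate case goes through; your proposal simply trades an analytic estimate for a short invariance observation.
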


\begin{proof}
Obviously, the $\widetilde{\U}_n$-orbit of the three dimensional attainable set is attainable in the general case.

In the oblate case the three dimensional attainable set coincides with the whole space $\widetilde{\HH}^{1,1}$.
Thus, the attainable set in the general case coincides with the manifold $M = \widetilde{\HH}^{1,n}$.

Let us denote by $C^{1,n}_{(c,w)}$ the cone of the admissible velocities at a point $(c,w) \in M$ in the general case
By the same argument as in Lemma~\ref{lem-any-geodesic} we may assume that $(c,w) \in \widetilde{\HH}^{1,1}$ without loss of generality.
Let $C^{1,1}_{(c,w)}$ be the admissible velocities cone for the three dimensional problem on the manifold $\widetilde{\HH}^{1,1}$.

Consider the symmetric case. Let us prove that the light-like geodesics sweep the boundary of the attainable set.
We should show that the cone $C^{1,n}_{(c,w)}$ at any point $(c,w)$ of this surface is directed inside this set
like in the three dimensional case, see Section~\ref{sec-atset-symmetric}.
In the symmetric case, this is equivalent to the fact that the surface normal vector is located in the cone $C^{1,n}_{(c,w)}$.
Due to $\U_n$-symmetry the normal vector to this surface is the same as in Lemma~\ref{lem-light-like-boundary}.
Next, Lemma~\ref{lem-inside} shows that this normal vector is located in the cone $C^{1,1}_{(c,w)} \subset C^{1,n}_{(c,w)}$.

In the prolate case we can prove the existence of the longest arcs on the attainable set using Theorem~\ref{th-lp-exist}
the same way as in the three dimensional case, see Lemma~\ref{lem-prolate-exist}.
Namely, in fact this lemma gives the upper bound for the length of tangent vector such that their ends lie on the affine section $\tau_{(c,w)} = 1$
of the cone $C^{1,1}_{(c,w)}$.
Let us achieve the similar bound in the general case.
Remember that we assume without loss of generality that $(c,w) \in \widetilde{\HH}^{1,1}$.
It follows that the cone $C^{1,n}_{(c,w)}$ is symmetric with respect to the three dimensional subspace $\widetilde{\HH}^{1,1} \subset \R \times \C^n$ and
$$
C^{1,n}_{(c,w)} = \left\{
\left(
\xi + \frac{\Image{w^T\bar{\omega}e^{-ic}}}{\sqrt{1+|w|^2}}, \
A \omega - iw\xi
\right) \, \Bigm| \,
\xi > 0, \, \omega \in \C^n, \, |\omega| \leqslant \frac{\xi}{\sqrt{\eta+1}} \right\},
$$
where $A = \diag{(\sqrt{1+|w|^2}e^{ic},1,\dots,1)}$ is a diagonal matrix with $n-1$ unit elements.
So, for the bound~\eqref{eq-estimation} as in the proof of Lemma~\ref{lem-prolate-exist} we obtain
$$
\frac{|v|_R}{\tau_{(c,w)}(v)} < \frac{\sqrt{C_2^2\xi^2 + |A\omega - iw\xi|^2}}{C_1\xi}  =
\frac{1}{C_1}\sqrt{C_2^2 + \left|A\frac{\omega}{\xi} - iw\right|^2}.
$$
We have
$$
\left|A\frac{\omega}{\xi} - iw\right| \leqslant \left|A\frac{\omega}{\xi}\right| + |iw| \leqslant \frac{|\omega|}{\xi} \sqrt{n + |w|^2} + |w|
\leqslant \frac{\sqrt{n+|w|^2}}{\sqrt{\eta+1}} + |w| \leqslant \frac{n+|w|^2}{\sqrt{\eta+1}} + |w|,
$$
where the first inequality is the triangle inequality and the second one is the Cauchy-Bunyakovsky-Schwarz inequality.
Following the rest part of the proof of Lemma~\ref{lem-prolate-exist} we get the following bound:
$$
\frac{|v|_R}{\tau_{(c,w)}(v)} < \frac{C_2}{C_1}\left(1 + \frac{1}{C_2}\left(\frac{n+|w|}{\sqrt{\eta+1}} + |w|\right)\right) = A + B|w|,
$$
for some constants $A, B > 0$.

Thus, the longest arcs exist on the attainable set.
It remains to repeat the argument from the proof of Proposition~\ref{prop-atset-prolate}~(1), i.e., due to the existence of the longest arcs the attainable set has the boundary and this boundary consists of the light-like geodesics which have the same description as in the three dimensional case by Lemma~\ref{lem-general-geodesic-atset}.
\end{proof}

Thus, due to Lemma~\ref{lem-general-geodesic-atset} and Proposition~\ref{prop-general-atset} Theorem~\ref{th-A} is also true for the general case.

Let us now discuss the cut locus in the general case.
We need to show that if $(c,w) \in \widetilde{\HH}^{1,n}$ is an intersection point of two geodesics,
then the values $c$ and $|w|$ depend only on the Killing norm of the initial covectors of these geodesics (see the statement of Theorem~\ref{th-B}).

\begin{lemma}
\label{lem-general-intersection}
Assume that two geodesics $\gamma_1$ and $\gamma_2$ of the space $\HH^{1,n}$ intersect at a point $(z,w) \in \HH^{1,n}$.
Then there exist $g_1, g_2 \in \U_n$ such that the curves $g_1 \cdot \gamma_1$ and $g_2 \cdot \gamma_2$ are geodesics of the space $\HH^{1,1}$ and
intersect at a point $(z, w_1)$ where $|w_1| = |w|$.
\end{lemma}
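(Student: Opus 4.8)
The plan is to push both geodesics into the totally geodesic slice $\HH^{1,1}$ by the isometric $\U_n$-action and then use the residual circle symmetry to force their images through one common point. Both $\gamma_1$ and $\gamma_2$ emanate from $o$ (this is the situation relevant to the cut locus), so Lemma~\ref{lem-any-geodesic}~(1) applies: there exist $A_1, A_2 \in \U_n$ with $\gamma_i = A_i \tilde{\gamma}_i$, where each $\tilde{\gamma}_i = A_i^{-1}\gamma_i$ is a geodesic of $\HH^{1,1}$ starting at $o$.

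First I would track the intersection point under these rotations. Recall that $\U_n$ acts by $(z,w) \mapsto (z, Aw)$, so it fixes the coordinate $z$ and preserves $|w|$. Applying $A_i^{-1}$ to the intersection point $(z,w)$ gives $(z, A_i^{-1}w)$, which lies on the image curve $\tilde{\gamma}_i \subset \HH^{1,1}$; hence $A_i^{-1}w$ must have vanishing components $w_2 = \dots = w_n = 0$, i.e. $A_i^{-1}w = (w_1^{(i)}, 0, \dots, 0)$ with $|w_1^{(i)}| = |w|$ by unitarity. Thus $\tilde{\gamma}_1$ passes through $(z, w_1^{(1)})$ and $\tilde{\gamma}_2$ through $(z, w_1^{(2)})$, two points of $\HH^{1,1}$ sharing the same $z$ and the same modulus $|w|$, but possibly differing in the phase of the first complex coordinate.

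The main (and essentially only) point is to align these two phases so that both curves actually meet at a single point rather than merely at points of equal modulus. For this I would invoke the circle subgroup $\U_1 = \{\diag{(e^{i\varphi},1,\dots,1)}\} \subset \U_n$: it preserves the slice $\HH^{1,1}$, acts there as an isometry fixing $o$ (so it carries geodesics of $\HH^{1,1}$ to geodesics of $\HH^{1,1}$), and rotates the first coordinate $w_1 \mapsto e^{i\varphi}w_1$ transitively on each circle $|w_1| = \const$. Choosing $\varphi_i$ with $e^{i\varphi_i}w_1^{(i)} = |w|$ and setting $g_i = \diag{(e^{i\varphi_i},1,\dots,1)}\,A_i^{-1} \in \U_n$, the curves $g_i \cdot \gamma_i = \diag{(e^{i\varphi_i},1,\dots,1)}\,\tilde{\gamma}_i$ are geodesics of $\HH^{1,1}$ through $o$, and both pass through the common point $(z, w_1)$ with $w_1 = |w|$, whence $|w_1| = |w|$. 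This would finish the proof; the only care needed is that the residual $\U_1$-rotation simultaneously preserves $\HH^{1,1}$ and sends both landing points to a single well-defined image, which is exactly the three-dimensional rotation symmetry recorded in Proposition~\ref{prop-sym}.
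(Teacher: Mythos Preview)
Your argument is correct and follows essentially the same route as the paper: push each geodesic into $\HH^{1,1}$ via Lemma~\ref{lem-any-geodesic}, note that the $\U_n$-action fixes $z$ and preserves $|w|$, and then use the residual $\U_1$-rotation inside $\HH^{1,1}$ to align the two landing points. The only cosmetic difference is that the paper rotates just one of the two curves by $s\in\U_1$ so that it meets the other at $(z,w_1)$, whereas you rotate both to the canonical point $(z,|w|)$; either choice works.
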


\begin{proof}
It follows from Lemma~\ref{lem-any-geodesic} that there exist $g_1, g_2' \in \U_n$ such that
the curves $g_1 \cdot \gamma_1$ and $g_2' \cdot \gamma_2$ are geodesics of the space $\HH^{1,1}$.
Since $\U_n$-action keeps $z$-coordinate we get $g_1(z,w) = (z,w_1) \in \HH^{1,1}$ and $g_2'(z,w) = (z,w_2) \in \HH^{1,1}$. Moreover, $|w_1| = |w_2| = |w|$.
Hence, there exists an element $s \in \U_1$ keeping $\HH^{1,1}$ such that $s \cdot (z,w_2) = (z,w_1)$.
So, we can put $g_2 = sg_2'$.
\end{proof}

Since the $\U_n$-action keeps the Killing norm we get from Lemma~\ref{lem-general-intersection} that the statement of Theorem~\ref{th-B} is also true for the general case.

Finally, Theorem~\ref{th-C} in the general case follows from the fact that the Killing form $\Kil$, the Lorentzian form $Q$ and the Riemannian form $g_{R,p}$
(see the corresponding definition at the beginning of Section~\ref{sec-inj-rad}) are $\U_n$-invariant.

\begin{small}

\end{small}

\begin{thebibliography}{99}

\bibitem{rakotoniaina}
Rakotoniaina, C.: Cut locus of the B-shperes. Ann. Glob. Anal. Geom. 3, 3, 313--327 (1985)

\bibitem{calvaruso-helix}
Calvaruso, G., Onnis, I.\,I., Pellegrino, L., Uccheddu, D.: Helix surfaces for Berger-like metrics on the anti-de Sitter space. Rev. Real Acad. Cienc. Exactas Fis. Nat. Ser. A-Mat. 118, 54 (2024)

\bibitem{agrachev-sachkov}
Agrachev, A.\,A., Sachkov, Yu.\,L.: Control Theory from the Geometric Viewpoint. Encyclopaedia of Mathematical Sciences. 87. Springer-Verlag (2004)

\bibitem{hall}
Hall, B.\,C.: Lie Groups, Lie Algebras, and Representations: An Elementary Introduction. Graduate Texts in Mathematics. 222 (2nd ed.) Springer (2015)

\bibitem{grong-vasiliev}
Grong, E., Vassil'ev, A.: Sub-Riemannian and sub-Lorentzian geometry on SU(1,1) and on its universal cover. J. Geom. Mech. 3, 2, 225--260 (2011)

\bibitem{ali-sachkov}
Ali, A.\,Z., Sachkov, Yu.\,L.: Lorentzian anti-de Sitter plane. arXiv:2407.07172 (to appear in print) (2024)

\bibitem{lokutsievskiy-podobryaev}
Lokutsievskiy, L.\,V., Podobryaev, A.\,V.: Existence Theorem for Sub-Lorentzian Problems. Journal of Dynamical and Control Systems. 30, 10 (2024)

\bibitem{kowalski-szenthe}
Kowalski, O., Szenthe, J.: On the existence of homogeneous geodesics in homogeneous Riemannian manifolds. Geometriae Dedicata, 81, 209--214 (2000)
Erratum. Geometriae Dedicata. 84, 331--332 (2001)

\bibitem{chang-markina-vasilev}
Chang, D-Ch., Markina, I., Vasil'ev, A.: Sub-Lorentzian geometry on anti-de Sitter space. J. Math. Pures Appl. (9) 90, 1, 82--110 (2008)

\bibitem{sachkov-didona}
Sachkov, Yu.\,L.: The Maxwell set in the generalized Dido problem. Sbornik: Mathematics. 197, 4, 595--621 (2006)

\bibitem{selberg}
Selberg, A.: Harmonic analysis and discontinuous groups in weakly symmetric Riemannian spaces with applications to Dirichlet series.
J. Indian Math. Soc. (N.S.) 20, 47--87 (1956)

\bibitem{finsler}
Finsler, P.: \"{U}ber das Vorkommen definiter und semidefiniter Formen in Scharen quadratischer Formen. Commentarii Mathematici Helvetici. 9, 1, 188--192 (1936)

\bibitem{bonsante-seppi}
Bonsante, F., Seppi, A.: Anti-de Sitter geometry and Teichm\"{u}ller theory. In the tradition of Thurston (ed. K.~Ohshika and A.~Papadopoulos), Springer Verlag (2020)

\bibitem{pontryagin}
Pontryagin, L.\,S., Boltyanskii, V.\,G., Gamkrelidze, R.\,V., Mishchenko, E.\,F.:
The Mathematical Theory of Optimal Processes. Pergamon Press, Oxford (1964)

\bibitem{podobryaev-extr}
Podobryaev, A.\,V.: Sub-Lorentzian Extremals Defined by an Antinorm. Differential Equations. 60, 3, 361--373 (2024)

\bibitem{podobryaev-symmetries}
Podobryaev, A.\,V.: Construction of Maxwell points in left-invariant optimal control problems. Proceedings of Steklov Institute of Mathematics. 315, 190--197 (2021)

\bibitem{podobryaev-hg}
Podobryaev, A.\,V.: Homogeneous geodesics in sub-Riemannian geometry. ESAIM:COCV. 29, 11 (2023)

\bibitem{sachkov-sachkova}
Sachkov, Yu.\,L., Sachkova, E.\,F.: Sub-Lorentzian distance and spheres on the Heisenberg group. Journal of Dynamical and Control Systems. 29, 1129--1159 (2023)

\bibitem{agrachev}
Agrachev, A.\,A.: Geometry of optimal control problems and Hamiltonian systems. Nonlinear and optimal control theory, P.~Nistri and G.~Stefani eds.
Lectures given at the C.I.M.E. Summer School (Cetraro, June 19--29, 2004). Lecture Notes in Math. 1932, 1--59 (2008)

\bibitem{arnold-index-maslova}
Arnold, V.\,I.:  On a characteristic class entering quantization conditions. Funct. Anal. Appl. 1, 1, 1--14 (1967)

\bibitem{krantz-parks}
Krantz, S.\,G., Parks, H.\,R.: The Implicit Function Theorem: History, Theory and Applications. Birkauser (2001)

\bibitem{beem-ehrlich-easley}
Beem, J.\,K., Ehrlich, P.\,E., Easley, K.\,L.: Global Lorentzian Geometry. Monographs Textbooks Pure Appl. Math. 202. Marcel Dekker Inc. (1996)

\bibitem{chen-lefloch}
Chen, B.-L., LeFloch, Ph.\,G.: Injectivity Radius of Lorentzian Manifolds.Commun. Math. Phys. 278, 679--713 (2008)

\end{thebibliography}
\end{document}